\documentclass{amsart}

\usepackage{amssymb}
\usepackage{amsmath}
\usepackage{amsthm}
\usepackage{bbm}
\usepackage{enumerate}
\usepackage{fancyhdr}
\usepackage{hyperref}
\usepackage{theoremref}
\usepackage{verbatim}
\usepackage{esint}
\usepackage{mathrsfs}
\usepackage{stmaryrd}

\hypersetup{
    colorlinks,
    citecolor=green,
    linkcolor=red
  }

  \makeatletter
\def\blfootnote{\xdef\@thefnmark{}\@footnotetext}
\makeatother

\newcommand{\xx}{\times}

\newcommand{\R}{\mathbb{R}}
\newcommand{\N}{\mathbb{N}}

\newcommand{\br}[1]{\left( #1 \right)}
\newcommand{\brs}[1]{\left[ #1 \right]}
\newcommand{\norm}[1]{\left\Vert #1 \right\Vert}
\newcommand{\abs}[1]{\left\vert #1 \right\vert}
\newcommand{\lb}[0]{\left\lbrace}
  \newcommand{\rb}[0]{\right\rbrace}

\DeclareMathOperator{\loc}{loc}

\DeclareMathOperator{\supp}{supp}

\newcommand\BoldSquare{%
  \setlength\fboxrule{1.1pt}\setlength\fboxsep{0pt}\fbox{\phantom{\rule{5pt}{5pt}}}}

\newenvironment{prf}{   \textsc{Proof.}}{\hfill \BoldSquare \vspace{5pt} }

\newtheorem{thm}{Theorem}[section]
\newtheorem{prop}[thm]{Proposition}
\newtheorem{lem}[thm]{Lemma}
\newtheorem{cor}[thm]{Corollary}

\theoremstyle{definition}

\newtheorem{definition}[thm]{Definition}
\newtheorem{rmk}[thm]{Remark}
\newtheorem*{notation}{Notation}

\title[Unbounded Potential Dependent Riesz Transforms]{Unboundedness of potential dependent Riesz transforms for totally irregular measures}
\author{Julian Bailey, Andrew J. Morris, Maria Carmen Reguera}

\date{}

\begin{document}

\begin{abstract}
We prove that, for totally irregular measures $\mu$ on $\R^{d}$ with $d\geq3$, the $(d-1)$-dimensional Riesz transform
$$
T_{A,\mu}^{V}f(x) = \int_{\R^d} \nabla_{1}\mathcal{E}_{A}^{V}(x,y) f(y) \, d \mu(y)
$$
adapted to the Schr\"{o}dinger operator $L_{A}^{V} = -\mathrm{div} A
\nabla + V$ with fundamental solution $\mathcal{E}_{A}^{V}$ is not
bounded on $L^{2}(\mu)$. This generalises recent results obtained by
Conde-Alonso, Mourgoglou and Tolsa for free-space elliptic operators
with H\"older continuous coefficients $A$ since it allows for the
presence of potentials $V$ in the reverse H\"{o}lder class
$RH_{d}$. We achieve this by obtaining new exponential decay estimates
for the kernel $\nabla_{1} \mathcal{E}_{A}^{V}$ as well as H\"older regularity estimates at local scales determined by the potential's critical radius function.
\end{abstract}

\maketitle


\section{Introduction}

\blfootnote{The first and third authors
are supported by the Engineering and Physical
Sciences Research Council (EPSRC) grant EP/P009239/1.}

\blfootnote{\textit{Key words and phrases.}
  Schr\"{o}dinger operator; Totally irregular measure; Riesz transforms. \\
  Mathematics Subject Classification. 42B20, 42B37.}

Suppose that $\mu$ is a Borel measure on $\R^{d}$ with $d \geq 3$. For
$0<s<d$ and suitable functions $f$, the $s$-dimensional Riesz
transforms are defined by
$$
R^{s}_{\mu}f(x):=\int_{\R^d} \frac{x-y}{|x-y|^{s+1}}f(y) \, d\mu(y).
$$
The $L^p(\mu)$-boundedness of these transforms is
deeply connected with geometric properties of the measure $\mu$
and so have been an object of study for many decades. Of particular
significance is the celebrated David--Semmes Conjecture, proven by Mattila, Melnikov and Verdera in \cite{Mattila1996} for $s=1$, and more recently by Nazarov, Tolsa and Volberg in \cite{nazarov2014uniform} for $s=d-1$. The conjecture states that when $\mu$ is an Alfhors regular 
measure, the Riesz transform $R^{s}_{\mu}$ is bounded on $L^2(\mu)$ if and only if the support of the measure $\mu$ is uniformly rectifiable. The more challenging  implication is to deduce geometric properties of the measure from Riesz transform bounds, and so one initially asks for necessary
conditions on the measure $\mu$ which imply Riesz transform bounds. Eiderman, Nazarov and Volberg considered this question in
\cite{eiderman2014s} and arrived at the conclusion that $\mu$
cannot be totally irregular. This means that it cannot happen that the upper $(d - 1)$-dimensional density
$$
\Theta^{d-1,*}(x,\mu) := \limsup_{r \rightarrow 0} \frac{\mu(B(x,r))}{r^{d-1}}
$$
is positive and finite $\mu$-almost everywhere while the lower $(d -
1)$-dimensional density
$$
\Theta^{d - 1}_{*}(x,\mu) := \liminf_{r \rightarrow 0}
\frac{\mu(B(x,r))}{r^{d - 1}}
$$
vanishes $\mu$-almost everywhere. They also investigated the case of non-integer $s$ which is geometrically distinct from the integer case. We don't consider the non-integer case here so we refer the interested reader to 
 \cite{mateu2005capacity},\cite{mateu2004riesz}, \cite{tolsa2011calderon}, \cite{reguera2016riesz} and \cite{jaye2016riesz}.

The Riesz transforms $R^{s}_{\mu}$ of dimension $s = d-1$ are also connected with boundary value problems for harmonic functions on subsets of $\R^d$. In particular,  the integral kernel $K(x,y):=(x-y)/|x-y|^{d}$ can be expressed in terms of the fundamental solution $\mathcal E^{0}_{I}(x,y):=|x-y|^{2-d}$ for the Laplace operator $\Delta$ on $\R^d$ as
\begin{equation}
  \label{eqtn:ClassicalRiesz}
K(x,y)=c_d\nabla_{1} \mathcal E^{0}_{I}(x,y),
\end{equation}
where $c_d>0$ is a dimensional constant and the notation $\nabla_{1}$ indicates that the gradient is taken with respect to the $x$-variable.

The Riesz transforms $R^{d-1}_{\mu}$ are thus inextricably linked to the Laplace operator. A natural question is then whether the definition of the Riesz transforms
$R^{d-1}_{\mu}$ can be adapted to more general elliptic differential operators and, if so, what is the relationship between geometric properties of the measure $\mu$ and  bounds for such generalised Riesz transforms. This direction of work has been pursued recently by Conde-Alonso, Mourgoglou and Tolsa in \cite{conde2019failure}, and also by Prat, Puliatti and Tolsa in \cite{prat2019boundedness}, where the theory for free-space elliptic operators with H\"older continuous coefficients has been developed.

In this paper we are primarily concerned with extending the work of Eiderman, Nazarov and Volberg, and specifically the recent work of Conde-Alonso, Mourgoglou and Tolsa in \cite{conde2019failure}, to a class of Schr\"odinger operators of the type
$$
L_{A}^{V}  = - \mathrm{div} A \nabla + V
$$
defined as unbounded operators in $L^{2}\br{\R^{d}}$ via the theory of sesquilinear forms. More precisely, we consider $d \xx d$ matrix-valued functions $A$ on $\R^d$ with real-valued coefficients that are bounded and elliptic with $\lambda, \, \Lambda > 0$ such that
\begin{equation}
  \label{eqtn:Ellipticity}
\lambda \abs{\xi}^{2} \leq \langle A(x) \xi, \xi \rangle \leq
\Lambda \abs{\xi}^{2}
\end{equation}
for all $\xi \in \R^{d}$ and all $x \in \R^{d}$, where $\langle\cdot,\cdot\rangle$ denotes the usual Euclidean inner-product. The coefficients are also assumed to be  H\"{o}lder continuous with $\alpha,\,\tau > 0$ such that
\begin{equation}
  \label{eqtn:Holder}
\abs{A(x) - A(y)} \leq \tau \abs{x - y}^{\alpha}
\end{equation}
for all $x, \, y \in \R^{d}$. The non-negative potential $V \in L^{1}_{\loc}(\R^{d})$ is assumed to be in the reverse H\"{o}lder class $RH_d$ with $C>0$ such that
$$
\br{\frac{1}{|B|}\int_{B}V^{d}}^{\frac{1}{d}} \leq C \br{\frac{1}{|B|}\int_{B} V}
$$
for all open $d$-dimensional balls $B\subset\R^d$ with Lebesgue measure $|B|$.

The $(d - 1)$-dimensional Riesz transform $T_{A,\mu}^{V}$ adapted to the operator $L_{A}^{V}$ for a Borel measure $\mu$ on $\R^{d}$ is the operator defined by 
$$
T_{A,\mu}^{V}f(x) := \int_{\R^d} \nabla_1\mathcal{E}_{A}^{V}(x,y) f(y) \, d \mu(y)
$$
for all $f \in L^{1}_{\loc}\br{\mu}$ and all $x \in \R^d \setminus \supp f$, where $\nabla_1\mathcal{E}_{A}^{V}(x,y):=\nabla(\mathcal{E}_{A}^{V}(\cdot,y))(x)$ and $\mathcal{E}_{A}^{V}$ is the fundamental solution for $L_{A}^{V}$ on $\R^{d}$. This is an extension of the classical Riesz transform, since in view of \eqref{eqtn:ClassicalRiesz} we have $R_\mu^{d-1} = c_d\, T_{A,\mu}^{V}$ when $L_{A}^{V}=\Delta$. 

The operator $T_{A,\mu}^{V}$ is said to be bounded on $L^2(\mu)$ when the set of truncated operators $T^{V}_{A,\mu,\epsilon}$, defined as above but using the kernel $\nabla_1\mathcal{E}_{A}^{V}(x,y) \mathbf{1}_{\{y\in\R^d: |x-y|>\epsilon\}}$, is uniformly bounded on $L^2(\mu)$ with respect to $\epsilon>0$. We are now in a position to state the main result of the paper.

\begin{thm} 
 \label{thm:Main} 
If the principal coefficient matrix $A$ satisfies the ellipticity and H\"older regularity in \eqref{eqtn:Ellipticity} and \eqref{eqtn:Holder} on $\R^d$, the potential $V$ is in $RH_{d}$, and the measure $\mu$ is totally irregular, then the $(d-1)$-dimensional Riesz transform $T_{A,\mu}^{V}$ adapted to the Schr\"odinger operator $L_{A}^{V}=- \mathrm{div} A \nabla + V$ is not bounded on $L^{2}(\mu)$.
\end{thm}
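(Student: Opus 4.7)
The plan is to reduce Theorem~\ref{thm:Main} to the corresponding result of Conde-Alonso, Mourgoglou and Tolsa in \cite{conde2019failure} (the case $V\equiv 0$) via a localization argument keyed on the critical radius function $\rho$ associated with $V\in RH_d$. Their theorem already handles $L_A^0=-\mathrm{div}\,A\nabla$ with H\"older continuous $A$ by a contradiction scheme in the spirit of Eiderman--Nazarov--Volberg: from $L^2(\mu)$-boundedness of the adapted Riesz transform one extracts Calder\'on--Zygmund size and smoothness estimates for the kernel, runs a blow-up/variational argument, and forces a rectifiable structure incompatible with total irregularity. Our task is to show that at scales below $\rho(x)$ the kernel $\nabla_1\mathcal{E}_A^V(x,y)$ is a perturbation of $\nabla_1\mathcal{E}_A^0(x,y)$ compatible with this machinery, while at larger scales it decays exponentially and can be absorbed into error terms.

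\textbf{Kernel estimates.} The first step is to prove the pointwise bound
$$
\abs{\nabla_1\mathcal{E}_A^V(x,y)} \lesssim \frac{1}{\abs{x-y}^{d-1}}\exp\br{-c\brs{\frac{\abs{x-y}}{\rho(x)}}^{\delta}}
$$
for some $\delta>0$, together with a local H\"older estimate
$$
\abs{\nabla_1\mathcal{E}_A^V(x,y) - \nabla_1\mathcal{E}_A^V(x',y)} \lesssim \br{\frac{\abs{x-x'}}{\abs{x-y}}}^{\alpha'}\frac{1}{\abs{x-y}^{d-1}}
$$
valid for $\abs{x-x'}\leq\abs{x-y}/2$ in the regime $\abs{x-y}\lesssim \rho(x)$. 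The size bound is obtained by combining Shen-type subsolution methods with Moser and Caccioppoli estimates for $L_A^V$, together with the known exponential bound for $\mathcal{E}_A^V$ itself. The H\"older estimate follows by comparing $\mathcal{E}_A^V$ to $\mathcal{E}_A^0$ on a ball of radius $\sim \rho(x)$ via the resolvent (or Duhamel) identity, exploiting classical Schauder theory for $L_A^0$ provided by \eqref{eqtn:Holder}.

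\textbf{Localization and reduction.} Assume for contradiction that $T_{A,\mu}^V$ is bounded on $L^2(\mu)$. The slow variation of $\rho$ for $V\in RH_d$ furnishes a Whitney-type cover of $\R^d$ by balls $B_i=B(x_i,c\rho(x_i))$ on which $\rho$ is essentially constant, and since $\mu$ is totally irregular at least one restriction $\mu_i=\mu|_{B_i}$ is also totally irregular with $\mu_i(\R^d)>0$ and inherits $L^2$-boundedness of the truncated operators. On $B_i$ the comparison
$$
\nabla_1\mathcal{E}_A^V(x,y) = \nabla_1\mathcal{E}_A^0(x,y) + E(x,y)
$$
holds with $E$ enjoying size and H\"older bounds strong enough to be treated as a Calder\'on--Zygmund error bounded on $L^2(\mu_i)$ (e.g.\ via a Schur argument). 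Consequently the truncated operators associated with $\nabla_1\mathcal{E}_A^0$ are uniformly bounded on $L^2(\mu_i)$, and the CAMT theorem applied to $\mu_i$ produces the required contradiction with total irregularity.

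\textbf{Main obstacle.} The crux is the local H\"older smoothness of $\nabla_1\mathcal{E}_A^V$: global Schauder estimates are unavailable because $V$ lies only in $RH_d$, so regularity must be extracted at scales $\lesssim\rho(x)$ where the zero-order term acts as a controlled perturbation. Making this perturbation uniform across the range of $\rho$, and ensuring the comparison error $E$ is CZ-compatible, constitute the essential new technical ingredients; once they are in place, the localization and CAMT reduction above are largely bookkeeping.
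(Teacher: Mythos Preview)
Your proposal is essentially correct and coincides with the paper's second, perturbative proof (Section~\ref{sec:ShortProof}). Both routes rest on the same two ingredients: exponential-decay size bounds for $\nabla_1\mathcal{E}_A^V$ and a flatness estimate $|\nabla_1\mathcal{E}_A^V(x,y)-\nabla_1\mathcal{E}_A^0(x,y)|\lesssim \rho(x)^{-\beta}|x-y|^{\beta+1-d}$ at scales $|x-y|\lesssim\rho(x)$, after which one defers to \cite{conde2019failure}. The difference is in how the localisation is packaged. The paper first runs the David--Mattila lattice and martingale-difference reduction of \cite{conde2019failure} (Section~\ref{ss:red}) to arrive at a smoothed measure $\sigma$ supported in a cube small relative to $\rho$, and then invokes the \emph{localised lower bound} from \cite[Section~6]{conde2019failure} on $T_A^0\sigma$. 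You instead restrict $\mu$ to a single ball of critical-radius size and apply the \emph{main theorem} of \cite{conde2019failure} as a black box. Your localisation is more economical and avoids the lattice machinery entirely; the paper's framework is set up that way because it also gives a self-contained variational proof (Section~\ref{subsec:Contradiction}) that does not cite \cite{conde2019failure}, and in that proof the lattice structure and the H\"older estimate (Corollary~\ref{cor:Reg}) are genuinely used.

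A few small remarks. The Whitney cover is unnecessary: one ball $B(x_0,c\rho(x_0))$ with $x_0\in\supp\mu$ suffices. You should make the standard reduction to $(d-1)$-polynomial growth explicit before running the Schur argument on the error kernel (the paper does this at the start of Section~\ref{ss:red}); without it the Schur integral $\int |x-y|^{\beta+1-d}\,d\mu_i(y)$ is not controlled. Finally, the global size bound you state with decay $|x-y|^{1-d}$ is stronger than what the paper actually proves---Theorem~\ref{thm:Size} gives only $|x-y|^{2-d}$ for large separations, with scale-dependent constants---but after restricting to a critical-radius ball this discrepancy is irrelevant, and for your comparison approach the H\"older estimate on $\nabla_1\mathcal{E}_A^V$ is not needed at all.
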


This result can also be interpreted from a positive viewpoint. That is, if the Riesz transform  $T_{A,\mu}^{V}$ is bounded on $L^{2}(\mu)$, then the measure $\mu$ cannot be totally irregular. The first such result for classical Riesz transforms ($A=I$, $V\equiv0$) was proved by Eiderman, Nazarov and Volberg in \cite{eiderman2014s}, whilst recently Conde-Alonso, Mourgoglou and Tolsa treated free-space elliptic operators ($V\equiv0$) with H\"older continuous coefficients in \cite{conde2019failure}.

The proof of the main result relies on orthogonality estimates via martingale differences and the David--Mattila dyadic lattice. These reduce
the problem to obtaining a local lower bound that is further simplified 
using size, smoothness and flatness properties of the operator $T_{A,\mu}^{V}$ from its kernel $\mathcal{E}_{A}^{V}$. After this reduction, we present two possible approaches. The first uses a variational argument to obtain the lower bound, following the approach in \cite{eiderman2014s} and \cite{conde2019failure}. The second uses a perturbative approach which instead reduces the analysis of the operator $L_{A}^{V}$ to the analysis of $L_{A}^{0}$ in order to apply results from the potential-free case obtained in \cite{conde2019failure}.

In both approaches, we rely on new exponential decay and H\"older regularity estimates for the kernel $\nabla_{1} \mathcal{E}_{A}^{V}$ at local scales determined by the potential's critical radius function. We also establish a flatness estimate so  that the kernel $\nabla_{1}\mathcal{E}_{A}^{V}$ can be approximated by the potential-free kernel $\nabla_{1} \mathcal{E}_{A}^{0}$. We use ideas from the work of Shen in \cite{shen1995lp} to obtain these estimates, which we find provide an interesting and valuable contribution to the theory in their own right.

The paper is organised as follows. In Section~\ref{sect:Prelim}, we detail preliminary material. In Section \ref{sec:KernelEstimates}, we obtain all of the kernel estimates required for the proof of the main result. In Section \ref{sec:Proof}, we prove Theorem~\ref{thm:Main}, beginning with the reduction to a local estimate in Section~\ref{ss:red}. We then complete the proof following the variational approach in Section~\ref{subsec:Contradiction} and then using the perturbative approach in Section \ref{sec:ShortProof}.

\section{Preliminaries}\label{sect:Prelim}

Throughout this section the coefficient matrix $A$ is assumed to
satisfy the bound and ellipticity in \eqref{eqtn:Ellipticity} on
$\R^d$ for some $d\geq3$ with constants $\lambda, \, \Lambda > 0$, as
well as the H\"{o}lder regularity in \eqref{eqtn:Holder} with
constants $\alpha, \, \tau > 0$. We begin this section by recalling
some standard regularity  results for solutions of elliptic equations
$-\mathrm{div} A \nabla u = 0$. We then provide some detail on the
properties of reverse H\"older potentials that will be required for
our analysis of Schr\"odinger equations $-\mathrm{div} A \nabla u
+ V u = 0$. Finally, we will recall the David--Mattila lattice of dyadic cubes. The following notation is introduced here for use throughout the paper.

\begin{notation}
For estimates concerning $a,\, b \in \R$, the notation $a \lesssim b$ will be used to denote that there exists a constant $C > 0$ such that $a \leq C b$. The notation $a \simeq b$ means that both $a \lesssim b$ and $b \lesssim a$ hold. The dependence of the constant $C$ on certain parameters should be clear from the context but to emphasise its dependence on a particular parameter $p\in\R$ the subscript notation $a \lesssim_{p} b$ will be used.

For sets $E,\, F$, the notation $E\subset F$ will denote that $E$ is a subset of $F$, whilst $E\subset\subset F$ will denote that the closure of $E$ is a compact subset of $F$ when $F\subset\R^d$. 

A ball $B$ in $\R^d$ will refer to an open $d$-dimensional ball $B=B(x,r):=\{y\in\R^d : |y-x| <r\}$  with centre $x\in\R^d$ and radius $r>0$, and with concentric dilates $\eta B := B(x,\eta r)$ for all $\eta>0$. Also, the Lebesgue measure of a measurable set $E\subset\R^d$ is denoted by $|E|$ and we set $\fint_E f :=  |E|^{-1} \int_E f$ for any $f\in L^1_{\loc}(\R^d)$ whenever $|E|\in(0,\infty)$.
  \end{notation}

\subsection{Divergence Form Elliptic Operators}

For non-negative $V \in L^{1}_{\loc}\br{\R^{d}}$, it is well-known (see, for
instance, {\cite[Theorem~VI.2.1]{kato1980perturbation}}) that the
bilinear form
$$
\mathfrak{l}_{A}^{V}(u,v) := \int_{\R^{d}} \langle A(x) \nabla u(x), \nabla
v(x) \rangle + \langle V(x) u(x), v(x) \rangle \, dx,
$$
defined for all $u,\, v$ in $\mathcal{V}:=\{u\in W^{1,2}\br{\R^{d}} : V^{\frac{1}{2}}u\in L^{2}\br{\R^{d}}\}$, is associated with a unique maximal accretive operator $L_{A}^{V} : D \br{L_{A}^{V}} \subset
L^{2}\br{\R^{d}} \rightarrow L^{2}\br{\R^{d}}$ such that
$$
 \int_{\R^d} \langle L_{A}^{V} u, v \rangle  = \mathfrak{l}_{A}^{V} (u,v)
$$
for all $v \in \mathcal{V} $ and all $u$ in the dense domain $D \br{L_{A}^{V}}$ given by
$$
D \br{L_{A}^{V}} = \lb u \in \mathcal{V} : \sup\nolimits_{v\in C_c^\infty(\R^d)} |\mathfrak{l}_{A}^{V}(u,v)| / \|v\|_{L^2(\R^d)} < \infty \rb.
$$
This operator has the formal expression $L_{A}^{V}=-\mathrm{div} A
\nabla + V$. We will write that $L_{A}^{V}u=0$, or $-\mathrm{div} A \nabla u + V u = 0$, in an open set $\Omega \subset \R^{d}$ when $u\in
W^{1,2}_{\loc}(\Omega)$ with $V^{\frac{1}{2}} u \in
L^{2}_{\loc}(\Omega)$ and $\int_{\Omega} ( \langle A
\nabla u, \nabla \phi \rangle + \langle V u, \phi \rangle ) = 0$ for all
smooth compactly supported functions $\phi \in C^{\infty}_{c}
\br{\Omega}$. We will also call such a function $u$ a weak solution of the equation $L_{A}^{V}u = 0$ on $\Omega$ or simply write that $u$ is $L_{A}^{V}$-harmonic on $\Omega$.

An essential part of our main argument relies on a well-known weak maximum principle. In particular, if $u \in W^{1,2}(\Omega)$ with $V^{\frac{1}{2}} u \in L^{2}(\Omega)$ is a weak solution of $L_{A}^{V}u=0$ on $\Omega$, and $u$ is continuous in a neighbourhood of the boundary $\partial \Omega$, then
\begin{equation}\label{thm:MaxPrinc}
\sup_{\Omega} \abs{u} = \sup_{\partial \Omega} \abs{u}.
\end{equation}
The same holds when $u$ is not assumed to be continuous in a neighbourhood of the boundary provided the supremum on $\partial \Omega$ is suitably interpreted. This is proved in {\cite[Theorem~8.1]{gilbarg2001elliptic}} for any non-negative $V \in L^{\infty}\br{\R^d}$ but the proof therein remains valid for any non-negative $V \in L^{1}_{\loc}\br{\R^{d}}$ whenever $V^{\frac{1}{2}} u \in L^{2}(\Omega)$.

In the potential-free case, when $V$ is identically 0, we will also use the abbreviated notation $L_{A}^{0}=-\mathrm{div} A \nabla$. The theorem below records some well-known local regularity and size estimates for weak solutions in this case. The brief proof included only serves to show the explicit dependence on the scale $R$.

\begin{thm}
  \label{thm:Interior}
Let $R>0$ and suppose that $B\subset\R^d$ is a ball of radius $r(B) \leq R$. If $L_{A}^{0} u = 0$ in an open set $\Omega\subset\R^d$ and $2B\subset\subset\Omega$, then
  \begin{equation}
    \label{eqtn:Interior1}
{|\nabla u(x)-\nabla u(y)|} \lesssim_R  \frac{1}{r(B)} \br{\frac{|x-y|}{r(B)}}^\alpha
\br{\fint_{2B} \abs{u}^{2}}^{\frac{1}{2}}
\ \text{ for all }\ x,y \in B
\end{equation}
and
\begin{equation}
  \label{eqtn:Interior2}
|\nabla u(x)| \lesssim_R \frac{1}{r(B)} \br{\fint_{2B} \abs{u}^{2}}^{\frac{1}{2}}
\ \text{ for all }\ x \in B,
\end{equation}
where both implicit constants may depend only on $d$, $\lambda$, $\Lambda$, $\tau$ and $R$. The same results hold if $2B$ is replaced by $\eta B$ for any $\eta>1$ but the implicit constants will then also depend on $\eta$.
\end{thm}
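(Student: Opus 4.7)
The plan is to reduce both estimates to classical interior Schauder estimates on the unit ball by rescaling. Write $B = B(x_{0}, r)$ with $r = r(B) \leq R$ and introduce the rescaled function $v(y) := u(x_{0} + r y)$ for $y \in B(0,2)$. A direct change of variables shows that $v$ is a weak solution of $-\mathrm{div}_{y} \tilde{A}(y) \nabla_{y} v(y) = 0$ on $B(0,2)$, where $\tilde{A}(y) := A(x_{0} + r y)$ inherits the ellipticity bounds \eqref{eqtn:Ellipticity} with the same constants $\lambda, \Lambda$, and satisfies the rescaled H\"older estimate
$$
\abs{\tilde{A}(y) - \tilde{A}(z)} \leq \tau r^{\alpha} \abs{y - z}^{\alpha} \leq \tau R^{\alpha} \abs{y - z}^{\alpha}.
$$
This is precisely where the dependence of the implicit constants on $R$ enters, since rescaling \emph{inflates} rather than deflates the H\"older seminorm of the coefficients, and the hypothesis $r \leq R$ is what keeps this rescaled seminorm finite in a uniform way.

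Next, I would invoke the classical interior $C^{1,\alpha}$ Schauder estimate for weak solutions of divergence form elliptic equations with H\"older continuous coefficients (as in Gilbarg--Trudinger or Han--Lin), which furnishes a constant $C_{0} = C_{0}(d, \lambda, \Lambda, \alpha, \tau R^{\alpha}) > 0$ such that
$$
\norm{\nabla v}_{L^{\infty}(B(0,1))} + \sup_{y \neq z \in B(0,1)} \frac{\abs{\nabla v(y) - \nabla v(z)}}{\abs{y-z}^{\alpha}} \leq C_{0} \br{\int_{B(0,2)} \abs{v}^{2}}^{\frac{1}{2}}.
$$

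To conclude, I would unscale via the chain rule identity $\nabla u(x) = r^{-1} \nabla v((x - x_{0})/r)$ together with the $L^{2}$ substitution $\int_{B(0,2)} \abs{v(y)}^{2}\, dy = r^{-d} \int_{2B} \abs{u(x)}^{2}\, dx \simeq_{d} \fint_{2B} \abs{u}^{2}$. This directly produces the factor $r(B)^{-1}$ in \eqref{eqtn:Interior2}, and for \eqref{eqtn:Interior1} the additional factor $\abs{(x-y)/r}^{\alpha}$ emerges from the H\"older quotient, combining with the chain rule prefactor to give the required $r(B)^{-1}(\abs{x-y}/r(B))^{\alpha}$ scaling. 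For the final assertion in which $2B$ is replaced by $\eta B$ for any $\eta > 1$, the same scheme applies verbatim, but the Schauder estimate is now invoked between $B(0,1)$ and $B(0,\eta)$, which makes the implicit constant additionally depend on $\eta$. The only real obstacle is book-keeping: one must track that $r^{\alpha}$ appears in the rescaled H\"older seminorm of $\tilde{A}$ (forcing the $R$-dependence) while $r^{-1}$ appears from the chain rule (giving the correct local scale), and ensure both operations are consistent with the volume rescaling in the $L^{2}$-average on the right-hand side.
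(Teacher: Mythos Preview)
Your proposal is correct and uses a clean rescale-to-the-unit-ball argument, which is different from the paper's route but equally valid. The paper instead works at the original scale: it cites a Campanato--Morrey type estimate (Giaquinta) to bound the H\"older oscillation of $\nabla u$ by $\br{\int_{\frac{3}{2}B}|\nabla u|^2}^{1/2}$, then applies Caccioppoli's inequality to pass to $\br{\fint_{2B}|u|^2}^{1/2}$, with the $R$-dependence appearing as an explicit power $R^{d/2+\alpha}$ in front; the pointwise gradient bound \eqref{eqtn:Interior2} is then deduced from \eqref{eqtn:Interior1} by picking a point $y_0\in B$ with $|\nabla u(y_0)|^2 \le \fint_B |\nabla u|^2$ and using the triangle inequality plus Caccioppoli again. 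Your approach has the pedagogical advantage of isolating exactly where the $R$-dependence enters (through the rescaled H\"older seminorm $\tau r^\alpha \le \tau R^\alpha$ of $\tilde A$), whereas the paper's approach is shorter because it invokes a ready-made scale-invariant estimate and only has to track the powers of $r(B)$ and $R$ that emerge. Both get both estimates simultaneously, but you obtain \eqref{eqtn:Interior2} directly from the Schauder bound on $\|\nabla v\|_{L^\infty}$, while the paper derives it as a corollary of \eqref{eqtn:Interior1}.
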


\begin{prf}  
 The first estimate \eqref{eqtn:Interior1} is a well-known result of Morrey and Campanato. For instance, by {\cite[Theorem~3.2]{giaquinta1983multiple}} and Caccioppoli's inequality, it follows that
 \[
{|\nabla u(x)-\nabla u(y)|}
    \lesssim |x-y|^\alpha \br{\int_{\tfrac{3}{2}B} \abs{\nabla u}^{2}}^{\frac{1}{2}}
    \lesssim R^{\frac{d}{2}+\alpha} \frac{1}{r(B)}\br{\frac{|x-y|}{r(B)}}^{\alpha} \br{\fint_{2B} \abs{u}^{2}}^{\frac{1}{2}}
\]
for all $x,\, y \in B$, whenever $-\mathrm{div} A \nabla u = 0$ in $2B$.
 
The second estimate \eqref{eqtn:Interior2} follows at once, since
there exists (e.g. by contradiction) a point $y_0 \in B$ such that
$\abs{\nabla u(y_0)}^2 \leq \fint_{B} \abs{\nabla u}^{2}$. Hence
\begin{align*}\begin{split}  
 \abs{\nabla u(x)} &\leq \abs{\nabla u(x) - \nabla u(y_0)} + \abs{\nabla
   u(y_0)} \\
 &\lesssim R^{\frac{d}{2}+\alpha} \frac{1}{r(B)} \br{\fint_{2B} \abs{u}^{2}}^{\frac{1}{2}} + \br{\fint_{B} \abs{\nabla u}^{2}}^{\frac{1}{2}}
\end{split}\end{align*}
for all $x \in B$ and Caccioppoli's inequality can be applied to the last term.
 \end{prf}

For non-negative $V \in L^{1}_{\loc}\br{\R^{d}}$, let $\mathcal{E}_{A}^{V}$ denote the fundamental solution to the
operator $L_{A}^{V}$ on $\R^{d}$. This is a function defined on $\lb
(x,y) \in \R^{d} \xx \R^{d} : x \neq y \rb$ with the properties that
$\mathcal{E}_{A}^{V}(\cdot,y) \in L^{1}_{\loc}\br{\R^{d}}$ and $L_{A}^{V} \mathcal{E}_{A}^{V}(\cdot,
y) = \delta_{y}$ for each $y \in \R^{d}$ in the sense that
\begin{equation}
  \label{eqtn:Fundamental}
\int \langle A(x) \nabla_1 \mathcal{E}_{A}^{V}(x,y), \nabla \phi(x)
\rangle + \langle V(x) \mathcal{E}_{A}^{V}(x,y), \phi(x) \rangle \,
dx = {\phi}(y)
\end{equation}
for any $\phi \in C^{\infty}_{c}\br{\R^{d}}$, where $\nabla_1\mathcal{E}_{A}^{V}(x,y):=\nabla(\mathcal{E}_{A}^{V}(\cdot,y))(x)$. For a detailed
construction of this object refer to \cite{davey2018fundamental}. The property
\eqref{eqtn:Fundamental} together with the
definition of weak solutions implies that for any open set $\Omega
\subset \R^{d}$ and $y \notin \Omega$ the function $x \mapsto
\mathcal{E}_{A}^{V}(x,y)$ is a weak solution to $L_{A}^{V} u = 0$ on $\Omega$. 
 
We will also use $\mathcal{E}_{A}^0$ to denote the fundamental solution on $\R^d$ in the potential-free case for $L_{A}^{0}=-\mathrm{div} A \nabla u$. The following estimate was proved for bounded domains by Gr{\"u}ter and Widman in \cite[Theorem 1.1]{gruter1982green} and for unbounded domains by Hofmann and Kim in \cite[Theorem~3.1]{hofmann2007kim}.

\begin{lem}
   \label{lem:PotentialFreeSize0} If $x, \, y \in \R^{d}$, then $\mathcal{E}_{A}^{0}(x,y) \simeq {\abs{x - y}^{-(d - 2)}}$, where the implicit constant may depend only on $d$, $\lambda$ and $\Lambda$.
   \end{lem}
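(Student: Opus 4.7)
The plan is to establish the upper and lower bounds separately via the classical De Giorgi--Nash--Moser machinery, following the strategy of Littman--Stampacchia--Weinberger that underlies the cited works. Fix $x \neq y$ and write $\rho = |x - y|$. The function $w(\cdot) := \mathcal{E}_A^0(\cdot, y)$ is a nonnegative weak solution of $L_A^0 w = 0$ on $\R^d \setminus \{y\}$, so on any ball not containing $y$ it satisfies both Moser's $L^\infty$ bound and Harnack's inequality; these require only the ellipticity \eqref{eqtn:Ellipticity}, not the H\"{o}lder continuity of $A$.

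For the upper bound, Moser's local boundedness yields $w(x) \lesssim \fint_{B(x,\rho/4)} w$, so it suffices to show $\int_{B(x,\rho/4)} w \lesssim \rho^{2}$. To obtain this, one tests the defining identity \eqref{eqtn:Fundamental} against a smooth cutoff supported in $B(y, 4\rho) \setminus B(y, \rho/8)$ that is affine on $B(x, \rho/4)$, combined with a Caccioppoli estimate on the annular region where $w$ is $L_A^0$-harmonic. A variant is the classical truncation argument of Serrin which gives $w \in L^{d/(d-2),\infty}_{\loc}$ with norm independent of $\rho$; both routes yield the desired $L^1$ average bound.

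For the lower bound, Harnack's inequality chained along the sphere $\{|z-y| = \rho/2\}$ and out to $x$ reduces matters to $\sup_{|z-y|=\rho/2} w(z) \gtrsim \rho^{2-d}$. This follows from \eqref{eqtn:Fundamental}: testing against $\eta \in C_c^\infty(B(y,\rho))$ with $\eta \equiv 1$ on $B(y,\rho/2)$ produces $\int \langle A \nabla w, \nabla \eta \rangle = 1$, and since $\nabla \eta$ is supported in $B(y,\rho) \setminus B(y,\rho/2)$ with $|\nabla \eta| \lesssim 1/\rho$, Cauchy--Schwarz gives $\int_{B(y,\rho)\setminus B(y,\rho/2)} |\nabla w|^2 \gtrsim \rho^{2-d}$. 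Combined with Caccioppoli on the slightly larger annulus this forces a lower bound of order $\rho^{2(2-d)}$ on the $L^2$ average of $w$ over the annulus, hence a pointwise value of order $\rho^{2-d}$ somewhere on the middle sphere; Harnack then propagates it to $x$.

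The main obstacle is the global nature of the problem on $\R^d$: the fundamental solution itself must be constructed via exhaustion of Green's functions on $B(0,n)$ and the two-sided bound verified to pass to the limit with constants independent of $n$, which in turn requires a uniform decay estimate at infinity. This delicate point is precisely what Hofmann--Kim \cite{hofmann2007kim} handle, and rather than reproducing their construction we invoke it directly, combined with the interior estimates of Gr\"uter--Widman \cite{gruter1982green} at fixed scales to conclude.
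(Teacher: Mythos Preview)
The paper does not give its own proof of this lemma: it simply records the result as a citation, attributing the bounded-domain case to Gr\"uter--Widman \cite{gruter1982green} and the full-space case to Hofmann--Kim \cite{hofmann2007kim}. Your proposal ultimately does the same, invoking those two references at the end, so in that sense the two ``proofs'' coincide.

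What you add beyond the paper is a heuristic outline of the Littman--Stampacchia--Weinberger strategy. That outline is essentially sound: Moser's subsolution estimate plus the weak-$L^{d/(d-2)}$ bound gives the upper bound, and the testing-plus-Caccioppoli-plus-Harnack chain gives the lower bound, all with constants depending only on $d,\lambda,\Lambda$. The one place to be careful is that Caccioppoli is an \emph{upper} bound on $\int|\nabla w|^2$ in terms of $\int|w|^2$, so the logic in your lower-bound paragraph should read: from $\int_{\text{annulus}}|\nabla w|^2 \gtrsim \rho^{2-d}$ and Caccioppoli on a slightly larger annulus one gets $\int|w|^2 \gtrsim \rho^{4-d}$, hence $\sup w \gtrsim \rho^{2-d}$ somewhere on the annulus. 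You have this right in spirit but the phrasing ``forces a lower bound of order $\rho^{2(2-d)}$ on the $L^2$ average'' could be misread. Otherwise the sketch is accurate, and your closing acknowledgment that the genuine content---existence and uniform decay of the global fundamental solution---lives in \cite{hofmann2007kim} is exactly the point.
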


   We only require the upper bound from the previous lemma in this paper. The H\"{o}lder regularity of the coefficient matrix $A$ permits the following size estimates for the derivative of the fundamental solution. 
   
\begin{lem}
  \label{lem:PotentialFreeSize}
Let $R > 0$. If $x, y \in \R^{d}$, then the following estimates hold:
\begin{enumerate}
\item
$\abs{\nabla_1 \mathcal{E}_{A}^{0}(x,y)} \lesssim_{R} {\abs{x - y}^{-(d - 1)}}$ \textrm{whenever }$\abs{x - y} \leq R$;
\item
$\abs{\nabla_1 \mathcal{E}_{A}^{0}(x,y)} \lesssim_{R} {\abs{x - y}^{-(d - 2)}}$ \textrm{whenever }$\abs{x - y} \geq R$.
\end{enumerate}
The implicit constants in both cases may depend only on $d$, $\lambda$, $\Lambda$, $\tau$ and $R$.
\end{lem}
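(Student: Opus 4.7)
The plan is to combine the interior gradient estimate for $L_{A}^{0}$-harmonic functions from Theorem~\ref{thm:Interior} with the pointwise size bound on the fundamental solution from Lemma~\ref{lem:PotentialFreeSize0}. The key observation is that for each fixed $y\in\R^d$ the function $u(\cdot):=\mathcal{E}_{A}^{0}(\cdot,y)$ satisfies $L_{A}^{0}u=0$ on $\R^d\setminus\{y\}$, so the estimate \eqref{eqtn:Interior2} is applicable on any ball $B$ whose double does not contain $y$. This reduces the derivative bound to bounding $\fint_{2B}|\mathcal{E}_{A}^{0}(z,y)|^{2}\,dz$ via Lemma~\ref{lem:PotentialFreeSize0}, provided we can ensure $|z-y|\simeq|x-y|$ uniformly for $z\in 2B$.

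For part~(1), when $|x-y|\leq R$, I would take $B=B(x,|x-y|/4)$, which has radius at most $R/4\leq R$, so Theorem~\ref{thm:Interior} applies with implicit constant depending only on $d,\lambda,\Lambda,\tau$ and $R$. For every $z\in 2B$ the reverse and direct triangle inequalities give $|x-y|/2\leq|z-y|\leq 3|x-y|/2$, so in particular $y\notin 2B$ and Lemma~\ref{lem:PotentialFreeSize0} yields $|\mathcal{E}_{A}^{0}(z,y)|\lesssim|x-y|^{-(d-2)}$ on $2B$. Plugging this into \eqref{eqtn:Interior2} produces
\[
|\nabla_{1}\mathcal{E}_{A}^{0}(x,y)|\lesssim_{R}\frac{1}{r(B)}\br{\fint_{2B}|\mathcal{E}_{A}^{0}(z,y)|^{2}\,dz}^{\frac{1}{2}}\lesssim_{R}\frac{1}{|x-y|}\cdot\frac{1}{|x-y|^{d-2}},
\]
which is the desired bound.

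For part~(2), when $|x-y|\geq R$, I cannot take a ball of radius comparable to $|x-y|$ because the constant in Theorem~\ref{thm:Interior} is only controlled for $r(B)\leq R$. Instead I choose the fixed scale $B=B(x,R/4)$. Then $2B$ has radius $R/2\leq|x-y|/2$, so for $z\in 2B$ one has $|z-y|\geq|x-y|-R/2\geq|x-y|/2$ and hence $|\mathcal{E}_{A}^{0}(z,y)|\lesssim|x-y|^{-(d-2)}$ on $2B$ by Lemma~\ref{lem:PotentialFreeSize0}. Applying \eqref{eqtn:Interior2} with $r(B)=R/4$ then yields the asserted bound
\[
|\nabla_{1}\mathcal{E}_{A}^{0}(x,y)|\lesssim_{R}\frac{1}{R}\cdot\frac{1}{|x-y|^{d-2}}\lesssim_{R}\frac{1}{|x-y|^{d-2}}.
\]

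The step that requires the most care is the choice of ball radius so that the implicit constant in Theorem~\ref{thm:Interior} actually stays controlled; this is precisely why the proof bifurcates into the two cases and why the large-distance estimate (2) only improves the trivial bound by the factor $|x-y|^{-(d-2)}$ rather than giving an extra power of decay. Everything else is routine scaling combined with the triangle inequality.
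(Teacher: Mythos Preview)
Your proof is correct and follows essentially the same approach as the paper: apply the interior gradient estimate \eqref{eqtn:Interior2} to $u=\mathcal{E}_{A}^{0}(\cdot,y)$ on the ball $B(x,|x-y|/4)$ when $|x-y|\leq R$ and on $B(x,R/4)$ when $|x-y|\geq R$, then bound the resulting average using Lemma~\ref{lem:PotentialFreeSize0} and the triangle inequality. The paper presents the two cases in the opposite order and phrases the intermediate step via an $L^\infty$ bound rather than the $L^2$ average, but the argument is the same.
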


\begin{prf}  
A weaker version of the second estimate was
proved in \cite{conde2019failure}, which we modify here. If $x, \, y \in \R^{d}$ and $\abs{x - y} \geq R$, then \eqref{eqtn:Interior2} in Theorem \ref{thm:Interior} and Lemma~\ref{lem:PotentialFreeSize0} show that
 \begin{align*}\begin{split}  
 \abs{\nabla_1 \mathcal{E}_{A}^{0}(x,y)} &\leq \norm{\nabla_1
   \mathcal{E}_{A}^{0}(\cdot, y)}_{L^{\infty}(B(x,R/4))} \\
 &\lesssim_R \frac{4}{R}\norm{\mathcal{E}_{A}^{0}(\cdot,y)}_{L^{\infty}(B(x,R/2))} \\
 &\lesssim_R \sup_{\tilde{x} \in B(x,R/2)} \frac{1}{\abs{\tilde{x} -
     y}^{d - 2}} \\
  &\lesssim \frac{1}{\abs{x - y}^{d -
  2}},
\end{split}\end{align*}
where the final inequality uses that $\abs{x - y} \leq 2
\abs{\tilde{x} - y}$ for all $\tilde{x} \in B(x,R/2)$.

A proof of the first estimate can be found in  \cite{kenig2011layer} but note that if $\abs{x - y} \leq R$, then applying the argument above on the ball $B(x,\abs{x - y}/4)$ gives the result.
\end{prf}

It is interesting to note that the previous size estimates for
$\nabla_{1}\mathcal{E}_{A}^{0}$ sharply contrast to the case $A =
I$, since they do not bound the derivative universally from above by a multiple of $\abs{x - y}^{-(d -
     1)}$. Instead, owing to the perturbation $A$, the previous
   estimates assert  a weaker rate of global decay for $\nabla_{1} \mathcal{E}_{A}^{0}$.

\subsection{Reverse H\"{o}lder Potentials}
\label{subsec:Reverse}

A non-negative function $V \in L^{1}_{\loc}(\R^{d})$ is said to belong to
the reverse H\"{o}lder class $RH_{q}$ of index $q \in
(1,\infty)$ when
$$
\llbracket V \rrbracket_{q} := \sup_{B\subset\R^d} \br{\fint_{B}V^{q}}^{\frac{1}{q}} \br{\fint_{B} V}^{-1} < \infty,
$$
where the supremum is taken over all open $d$-dimensional balls $B$ in $\R^d$.

We now recall some fundamental properties of reverse H\"{o}lder potentials that will be used throughout the paper. First, recall that reverse H\"{o}lder potentials are a source of doubling measures, whereby if $V \in RH_{q}$
for some $q \in (1,\infty)$, then
\begin{equation}
  \label{eqtn:0.3}
  V(B(x,2r)) \lesssim V(B(x,r))
\end{equation}
for all $x \in \R^{d}$ and $r > 0$, where $V(E) := \int_{E} V(y) \, dy $ for measurable sets $E \subset \R^{d}$. This follows from the fact that
reverse H\"{o}lder potentials are
$A_{\infty}$-weights, which are doubling. These facts and the following well-known self-improvement property can be found, for instance, in \cite[Chapter 9]{grafakos2009modern}.

\begin{prop}
  \label{prop:SelfImprovement}
  If $V \in RH_{q}$ for some $q \in (1,\infty)$, then there
   exists $\varepsilon > 0$ such that $V \in RH_{q'}$ for all  $q' \in [q,q + \varepsilon)$.
\end{prop}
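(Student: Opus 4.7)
I plan to follow the classical Gehring self-improvement argument, using a Calderón--Zygmund stopping-time construction on $V$ to derive a reverse distributional inequality whose Fubini rearrangement forces improved integrability.

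First, I would fix an arbitrary ball $B_0 \subset \R^d$. Since the reverse Hölder condition is invariant under the rescaling $V \mapsto cV$ for $c > 0$, I would normalize so that $\fint_{B_0} V = 1$; the target then reduces to proving $\int_{B_0} V^{q+\varepsilon} \lesssim |B_0|$ for some $\varepsilon > 0$ depending only on $q$, $d$, and $\llbracket V \rrbracket_q$, with the implicit constant independent of $B_0$.

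Next, for each $\lambda \geq 1$, I apply the dyadic Calderón--Zygmund decomposition at height $\lambda$ on $B_0$ to produce disjoint sub-cubes $\{Q_j^\lambda\}$ satisfying $\lambda < \fint_{Q_j^\lambda} V \lesssim \lambda$ and $V \leq \lambda$ almost everywhere on $B_0 \setminus \bigcup_j Q_j^\lambda$. The reverse Hölder hypothesis on each $Q_j^\lambda$ gives $\int_{Q_j^\lambda} V^q \lesssim \lambda^{q-1} \int_{Q_j^\lambda} V$. Splitting $\int_{Q_j^\lambda} V$ over the sets $\{V > \lambda/K\}$ and $\{V \leq \lambda/K\}$ for a suitable constant $K > 1$, using the Calderón--Zygmund average lower bound $\fint_{Q_j^\lambda} V > \lambda$ to absorb the small-values contribution, and summing over $j$ yields the distributional inequality
$$\int_{\{V > \lambda\} \cap B_0} V^q \lesssim \lambda^{q-1} \int_{\{V > \lambda/K\} \cap B_0} V \qquad \text{for all } \lambda \geq 1.$$

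To finish, I multiply this inequality by $\lambda^{\varepsilon - 1}$ and integrate over $\lambda \in [1, \infty)$. A straightforward Fubini computation identifies the left-hand side with a multiple of $\varepsilon^{-1} \int_{B_0 \cap \{V > 1\}} (V^{q+\varepsilon} - V^q)$ and the right-hand side with a multiple of $(q+\varepsilon - 1)^{-1} K^{q+\varepsilon - 1} \int_{B_0} V^{q+\varepsilon}$. For $\varepsilon$ sufficiently small the $\varepsilon^{-1}$ coefficient dominates the bounded coefficient on the right, permitting an absorption of the higher-power integral into the left and yielding the desired bound on $\int_{B_0} V^{q+\varepsilon}$. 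The hard part here will be justifying the absorption step, which requires \emph{a priori} finiteness of $\int_{B_0} V^{q+\varepsilon}$; I plan to circumvent this by running the entire argument on the truncations $V \wedge N$ (for which the reverse Hölder inequality survives with the same constant and the same stopping cubes), and then passing to the limit $N \to \infty$ via monotone convergence to conclude $V \in RH_{q+\varepsilon}$.
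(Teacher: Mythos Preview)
The paper does not actually prove this proposition: it is stated as a well-known fact with a citation to \cite[Chapter~9]{grafakos2009modern}. Your proposal follows the classical Gehring argument, which is indeed the standard proof and is essentially what one finds in that reference. The overall strategy is sound.

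One genuine technical point deserves attention. Your planned justification of the absorption step via the truncations $V \wedge N$ is not quite right: the function $V \wedge N$ need \emph{not} satisfy the reverse H\"older inequality with the same constant as $V$ (truncating from above can destroy the balance between the $L^q$-average and the $L^1$-average on balls where $V$ is large), and the stopping cubes for $V \wedge N$ are certainly not the same as those for $V$. The clean fix is to leave $V$ alone and instead truncate the $\lambda$-integration: integrate the distributional inequality over $\lambda \in [1,M]$, so that after Fubini all quantities involve $\min(V,M)$ and are manifestly finite, perform the absorption with the resulting uniform bound, and then send $M \to \infty$ by monotone convergence. A secondary point: the paper's $RH_q$ condition is stated over balls, while the Calder\'on--Zygmund stopping cubes are dyadic; you should remark that the ball and cube versions of $RH_q$ are equivalent (via a routine covering argument) before applying the hypothesis on the $Q_j^\lambda$.
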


We also need the following lemma proved by Shen in \cite{shen1995lp} which  quantifies how the measure $V(B(x,r))$ decreases as $r$  decreases.

\begin{lem}[{\cite[Lemma~1.2]{shen1995lp}}]
  \label{lem:RHn}
If $V \in RH_{q}$ for some $q \in (1,\infty)$, then
$$
V (B(x,r)) \lesssim \br{\frac{r}{R}}^{d - \frac{d}{q}} V \br{B(x,R)}
$$
for all $x \in \R^{d}$ and $0 < r < R$, where the implicit constant depends only on $V$ through $\llbracket V \rrbracket_{q}$.
\end{lem}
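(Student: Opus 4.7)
The plan is to prove this by a direct application of Hölder's inequality followed by the reverse Hölder inequality on the larger ball, exploiting the fact that $V\geq 0$ so that integrals are monotone under enlargement of the domain.

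First I would bound $V(B(x,r))$ from above using Hölder's inequality with conjugate exponents $q$ and $q/(q-1)$, which gives
\[
V(B(x,r)) = \int_{B(x,r)} V \leq |B(x,r)|^{1 - \frac{1}{q}} \br{\int_{B(x,r)} V^{q}}^{\frac{1}{q}}.
\]
Since $V\geq 0$ and $B(x,r)\subset B(x,R)$, I can enlarge the domain of the $L^q$-integral to $B(x,R)$, and then apply the reverse Hölder hypothesis on that ball:
\[
\br{\int_{B(x,r)} V^q}^{\frac{1}{q}} \leq \br{\int_{B(x,R)} V^q}^{\frac{1}{q}} = |B(x,R)|^{\frac{1}{q}} \br{\fint_{B(x,R)} V^q}^{\frac{1}{q}} \leq \llbracket V \rrbracket_q\, |B(x,R)|^{\frac{1}{q} - 1}\, V(B(x,R)).
\]

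Combining these two inequalities gives
\[
V(B(x,r)) \leq \llbracket V \rrbracket_q \, |B(x,r)|^{1 - \frac{1}{q}}\, |B(x,R)|^{\frac{1}{q} - 1}\, V(B(x,R)) = \llbracket V \rrbracket_q \br{\frac{r}{R}}^{d - \frac{d}{q}} V(B(x,R)),
\]
where the last equality uses $|B(x,r)|/|B(x,R)| = (r/R)^d$ and collects the exponents as $d(1 - \tfrac{1}{q}) = d - \tfrac{d}{q}$. The implicit constant depends on $V$ only through $\llbracket V \rrbracket_q$, as claimed.

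There is really no obstacle here, since the argument is essentially two lines once one notices the correct pairing: Hölder on the small ball to trade $V$ for $V^q$ at the cost of a Lebesgue factor, and reverse Hölder on the large ball to trade $V^q$ back for $V$. The only point to check is the bookkeeping of the exponent of $r/R$, which lands exactly at $d - d/q$.
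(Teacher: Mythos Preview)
Your proof is correct. The paper does not include its own proof of this lemma, instead citing \cite[Lemma~1.2]{shen1995lp}; your argument is the standard one (H\"older on the small ball, enlarge the domain, reverse H\"older on the large ball) and matches Shen's original.
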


The subsequent lemma will be used numerous times when we come to
prove estimates for the kernel $\nabla_{1} \mathcal{E}^{V}_{A}$.

\begin{lem}
  \label{lem:RHn2}
The following estimates hold for all $x \in \R^{d}$ and $r > 0$:\\ If $V \in RH_{\frac{d}{2}}$, then
   \begin{equation}
    \label{eqtn:0.4}
\int_{B(x,r)} \frac{V(y)}{\abs{y - x}^{d - 2}} \, dy \lesssim
\frac{V(B(x,r))}{r^{d - 2}};
\end{equation}
If $V \in RH_{d}$, then
  \begin{equation}
    \label{eqtn:RHn2}
\int_{B(x,r)} \frac{V(y)}{\abs{y - x}^{d - 1}} \, dy \lesssim
\frac{V(B(x,r))}{r^{d - 1}}.
\end{equation}
The implicit constants in
\eqref{eqtn:0.4} and \eqref{eqtn:RHn2} depend only on $V$ through
$\llbracket V \rrbracket_{\frac{d}{2}}$ and $\llbracket V
\rrbracket_{d}$ respectively.
\end{lem}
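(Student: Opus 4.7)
The plan is to dyadically decompose the ball $B(x,r)$ into the annuli $A_k := B(x, 2^{-k}r) \setminus B(x, 2^{-k-1}r)$ for $k \geq 0$, on each of which $|y-x|^{-(d-s)} \simeq (2^{-k}r)^{-(d-s)}$ with $s\in\{1,2\}$ corresponding to the two claims. This reduces each integral to
\begin{equation*}
\int_{B(x,r)} \frac{V(y)}{|y-x|^{d-s}}\,dy \;\lesssim\; \sum_{k=0}^{\infty} (2^{-k}r)^{-(d-s)}\, V(B(x,2^{-k}r)).
\end{equation*}

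Next I would feed each term into Shen's Lemma~\ref{lem:RHn}, which, for $V \in RH_q$, yields $V(B(x,2^{-k}r)) \lesssim 2^{-k(d - d/q)} V(B(x,r))$. Plugging this in produces a geometric series with ratio $2^{k(d/q - s)}$ times $V(B(x,r))/r^{d-s}$, so convergence demands the strict inequality $q > d/s$. For \eqref{eqtn:0.4} this means $q > d/2$, and for \eqref{eqtn:RHn2} it means $q > d$, both borderline against the hypotheses $V \in RH_{d/2}$ and $V \in RH_d$.

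The main (and only real) obstacle is exactly this endpoint issue, which I would resolve by invoking the self-improvement property in Proposition~\ref{prop:SelfImprovement}: from $V \in RH_{d/2}$ one obtains $V \in RH_{d/2+\varepsilon}$ for some $\varepsilon > 0$, and similarly from $V \in RH_d$ one gets $V \in RH_{d+\varepsilon}$. Applying Lemma~\ref{lem:RHn} with this slightly better exponent gives a geometric ratio strictly less than $1$, so the series sums to a constant depending only on $d$ and $\llbracket V \rrbracket_{d/2}$ (respectively $\llbracket V \rrbracket_d$, noting that the self-improvement constants themselves are controlled by these quantities). Collecting the bounds yields the desired estimates, and since the argument is entirely local to $B(x,r)$, no further geometric input is needed.
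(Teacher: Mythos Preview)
Your proposal is correct and follows essentially the same approach as the paper: a dyadic decomposition of $B(x,r)$ into annuli, application of Lemma~\ref{lem:RHn} on each piece, and use of the self-improvement property (Proposition~\ref{prop:SelfImprovement}) to push past the endpoint so that the resulting geometric series converges. The paper writes out the $RH_d$ case explicitly and remarks that the $RH_{d/2}$ case is identical, exactly as you outline.
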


\begin{prf}  
  Let's first consider the second estimate. Suppose that $V \in RH_{d}$. On splitting the integral in \eqref{eqtn:RHn2} into annuli,
  \begin{align*}\begin{split}  
 \int_{B(x,r)} \frac{V(y)}{\abs{y - x}^{d - 1}} \, dy &= \sum_{k =
   0}^{\infty} \int_{B(x,2^{-k}r) \setminus B(x,2^{-(k + 1)}r)}
 \frac{V(y)}{\abs{y - x}^{d - 1}} \, dy \\
 &\leq \sum_{k = 0}^{\infty} \frac{V(B(x,2^{-k} r))}{(2^{-(k +
     1)}r)^{d - 1}}.
\end{split}\end{align*}
Since $V \in RH_{d}$, it follows from the self-improvement property
that $V \in RH_{q'}$ for some $q' > d$. Applying Lemma \ref{lem:RHn} gives
\begin{align*}\begin{split}  
  \int_{B(x,r)} \frac{V(y)}{\abs{y - x}^{d - 1}} \, dy &\leq \sum_{k =
  0}^{\infty} \br{2^{-k}}^{d - \frac{d}{q'}} \frac{V(B(x,r))}{(2^{-(k +
    1)}r)^{d - 1}} \\
&\lesssim \br{ \sum_{k = 0}^{\infty} 2^{-k \br{1 - \frac{d}{q'}}}}
\frac{V(B(x,r))}{r^{d - 1}} \\
&\lesssim \frac{V(B(x,r))}{r^{d - 1}}.
\end{split}\end{align*}
The first estimate can be proved in an identical manner.
\end{prf}

\subsection{Schr\"{o}dinger Operators}
\label{subsec:Schrodinger}

Let $V \in RH_{\frac{d}{2}}$ and consider the Schr\"{o}dinger operator $
L_{I}^{V} = - \Delta + V.
$
In the paper \cite{shen1995lp},
Shen introduced technical machinery that could be used for
the analysis of Schr\"{o}dinger operators with potentials in the
reverse H\"{o}lder class $RH_{\frac{d}{2}}$.
As the proof of our result will rely heavily on this machinery, it
will be fruitful to recall any
pertinent details and state any result that will be
required for the proof of our theorem.

At the heart of Shen's Schr\"{o}dinger operator machinery is the
critical radius function. This is the function $\rho : \R^{d}
\rightarrow [0,\infty)$ defined through
\begin{equation}
  \label{eqtn:CritRad}
\rho(x) := \sup \lb r > 0 : \frac{1}{r^{d - 2}}
\int_{B(x,r)} V(x) \, dx \leq 1 \rb
\end{equation}
for $x \in \R^{d}$. The physical intuition and drive behind the
introduction of this function is that if the potential does not, on
average, oscillate too wildly, then the Schr\"{o}dinger operator
should behave locally like the classical Laplacian $-\Delta$. The
function $\rho$ precisely determines this local scale.

\begin{rmk}
  \label{rmk:Critical}
It follows directly from the definition of the critical radius
function that
$
V(B(x,\rho(x))) \simeq \rho(x)^{d - 2}
$
for all $x \in \R^{d}$.
\end{rmk}

The following lemma proved by Shen in \cite{shen1995lp} allows us to compare the
critical radius function at two distinct points.

\begin{lem}[{\cite[Lemma~1.4]{shen1995lp}}]
  \label{lem:Shen0}
If $V \in RH_{\frac{d}{2}}$, then there exist $B_{0}, M_{0} > 0$, depending only on $V$ through $\llbracket V \rrbracket_{\frac{d}{2}}$, such that
 \[
        B_{0}^{-1} \rho(x) \br{1 + \frac{\abs{x -
          y}}{\rho(x)}}^{-M_{0}} \leq \rho(y) \leq B_{0}
    \rho(x) \br{1 + \frac{\abs{x -
          y}}{\rho(x)}}^{\frac{M_{0}}{M_{0} + 1}}
   \]
     for all $x, \, y \in \R^{d}$.
  \end{lem}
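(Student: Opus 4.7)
\textsc{Plan.} The statement is Shen's Lemma 1.4 from \cite{shen1995lp}, and the plan is to follow his strategy. The self-improvement property in Proposition~\ref{prop:SelfImprovement} lets us fix $q > d/2$ with $V \in RH_{q}$; set $\delta := 2 - d/q > 0$, and write $R := |x - y|$. We prove the upper bound first, then deduce the lower bound by symmetry.

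For the upper bound $\rho(y) \leq B_0 \rho(x)(1 + R/\rho(x))^{M_0/(M_0+1)}$, the strategy is to exhibit a radius $r$ with $V(B(y,r))/r^{d-2} \geq 1$, which yields $\rho(y) \leq r$. When $R \leq \rho(x)$, the inclusion $B(y,2\rho(x)) \supset B(x,\rho(x))$ combined with Remark~\ref{rmk:Critical} gives $V(B(y,2\rho(x)))/(2\rho(x))^{d-2} \gtrsim 1$, and dilating the radius by a constant factor $K$ using the growth estimate $V(B(y,Kr))/(Kr)^{d-2} \geq K^{\delta} V(B(y,r))/r^{d-2}$ that follows from Lemma~\ref{lem:RHn} restores a unit ratio, yielding $\rho(y) \lesssim \rho(x)$, which suffices since the bound's factor $(1+R/\rho(x))^{M_0/(M_0+1)}$ is at least $1$. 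When $R > \rho(x)$, the inclusion $B(x,R) \subset B(y,2R)$ combined with the lower bound $V(B(x,R)) \gtrsim (R/\rho(x))^{d-d/q} \rho(x)^{d-2}$ from Lemma~\ref{lem:RHn} yields the preliminary estimate $V(B(y,2R))/(2R)^{d-2} \gtrsim (R/\rho(x))^{\delta}$, which exceeds unity with a substantial margin. A balancing argument between scales $r$ and $2R$ then selects the smallest $r$ at which the ratio still reaches unity; solving the resulting inequality produces $r \simeq \rho(x)(R/\rho(x))^{M_0/(M_0+1)}$ with $M_0 := (d-2)/\delta$, giving the claimed upper bound.

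For the lower bound, apply the upper bound with $x$ and $y$ exchanged to obtain $\rho(x) \leq B_0 \rho(y)(1 + R/\rho(y))^{M_0/(M_0+1)}$. Splitting into subcases based on whether $R \leq \rho(y)$ or $R > \rho(y)$, we use $(1 + R/\rho(y))^{M_0/(M_0+1)} \leq 2^{M_0/(M_0+1)}$ in the first and $\leq (2R/\rho(y))^{M_0/(M_0+1)}$ in the second, then solve for $\rho(y)$ using the identities $1/(1 - M_0/(M_0+1)) = M_0 + 1$ and $(M_0/(M_0+1))/(1 - M_0/(M_0+1)) = M_0$; this reproduces the stated bound $\rho(y) \geq B_0^{-1} \rho(x)(1 + R/\rho(x))^{-M_0}$ after adjusting constants.

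\textsc{Main obstacle.} The most delicate step is the balancing argument in the case $R > \rho(x)$ of the upper bound, since a naive containment-based argument only gives $\rho(y) \lesssim R$, with effective exponent $1$ in $R/\rho(x)$, whereas the claim requires the strictly sub-linear $M_0/(M_0+1) < 1$. Bridging this gap requires careful use of the polynomial decay rate from Lemma~\ref{lem:RHn} applied to $V(B(y,\cdot))/(\cdot)^{d-2}$ and matched against the excess factor $(R/\rho(x))^{\delta}$ obtained at scale $2R$; the explicit value $M_0 = (d-2)/(2-d/q)$ emerges from equating these two quantities.
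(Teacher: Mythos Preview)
The paper does not give its own proof of this lemma; it simply quotes the result from Shen's paper \cite{shen1995lp}, so there is nothing in the paper to compare your argument against. Your outline is essentially Shen's original proof, and the overall architecture --- self-improvement to $RH_q$ with $q>d/2$, the two cases $R\le\rho(x)$ and $R>\rho(x)$ for the upper bound, and then the symmetry trick for the lower bound --- is correct.

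There is one genuine gap in the sketch of the case $R>\rho(x)$. The growth estimate you cite from Lemma~\ref{lem:RHn}, namely $V(B(y,Kr))/(Kr)^{d-2}\gtrsim K^{\delta}\,V(B(y,r))/r^{d-2}$, only gives a \emph{lower} bound on the ratio at the larger scale in terms of the smaller one. Knowing that $V(B(y,2R))/(2R)^{d-2}\gtrsim (R/\rho(x))^{\delta}$ is large therefore yields only $\rho(y)\lesssim R$, which is the trivial linear bound; by itself it cannot pin down a sub-linear scale $r\ll R$ at which the ratio first drops to one. What is missing is the complementary \emph{upper} growth bound $V(B(y,R'))/(R')^{d-2}\lesssim (R'/r)^{N_0}\,V(B(y,r))/r^{d-2}$, which comes from the doubling property~\eqref{eqtn:0.3} rather than from Lemma~\ref{lem:RHn}. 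Combining the lower bound at scale $2R$ with this upper bound between $\rho(y)$ and $2R$ gives $(R/\rho(x))^{\delta}\lesssim (R/\rho(y))^{N_0}$ and hence $\rho(y)\lesssim \rho(x)(R/\rho(x))^{1-\delta/N_0}$, so that $M_0=N_0/\delta-1$ depends on the doubling constant of $V$. In particular, the explicit value $M_0=(d-2)/\delta$ you state is not what drops out of the argument; the correct $M_0$ depends on $\llbracket V\rrbracket_{d/2}$ through the doubling constant, consistent with the statement that $M_0$ depends on $V$ only through $\llbracket V\rrbracket_{d/2}$.
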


An immediate corollary of this lemma is that the critical radius
function will be bounded from below on compactly supported sets. If, in addition, $V \neq 0$
then it will also be bounded from above.

\begin{cor}
  \label{cor:Shen0}
  Suppose that $V \in RH_{\frac{d}{2}}$. If $M > 0$ and $E \subset B(0,M) \subset \R^{d}$, then
  $$
B_{0}^{-1} \rho(0) \br{1 + \frac{M}{\rho(0)}}^{-M_{0}} \leq
\rho(y) \leq B_{0} \rho(0) \br{1 +
  \frac{M}{\rho(0)}}^{\frac{M_{0}}{M_{0} + 1}}
$$
for all $y \in E$, where $B_{0}$ and $M_{0}$ denote the constants from Lemma \ref{lem:Shen0}.
\end{cor}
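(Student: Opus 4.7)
The plan is to derive Corollary~\ref{cor:Shen0} as a direct specialisation of Lemma~\ref{lem:Shen0}, with the only real work being a monotonicity check in the exponent.

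First, I would fix an arbitrary $y \in E$ and apply Lemma~\ref{lem:Shen0} with the first point taken to be the origin $x = 0$. This immediately yields the two-sided estimate
\[
B_{0}^{-1} \rho(0) \br{1 + \frac{\abs{y}}{\rho(0)}}^{-M_{0}} \leq \rho(y) \leq B_{0} \rho(0) \br{1 + \frac{\abs{y}}{\rho(0)}}^{\frac{M_{0}}{M_{0} + 1}}.
\]
The containment $E \subset B(0,M)$ gives $\abs{y} < M$, so it remains to replace $\abs{y}$ by $M$ in each bound while preserving the inequalities.

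For the upper bound, the exponent $\frac{M_{0}}{M_{0}+1}$ is positive, so $t \mapsto (1 + t/\rho(0))^{M_{0}/(M_{0}+1)}$ is increasing on $[0,\infty)$, and the inequality $\abs{y} \leq M$ immediately upgrades the right-hand side to $B_{0}\rho(0) (1 + M/\rho(0))^{M_{0}/(M_{0}+1)}$. For the lower bound, the exponent $-M_{0}$ is negative, so $t \mapsto (1 + t/\rho(0))^{-M_{0}}$ is decreasing on $[0,\infty)$; thus $\abs{y} \leq M$ makes the left-hand expression \emph{larger} than $B_{0}^{-1}\rho(0)(1 + M/\rho(0))^{-M_{0}}$, which is exactly the direction needed to conclude $\rho(y) \geq B_{0}^{-1}\rho(0)(1 + M/\rho(0))^{-M_{0}}$.

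There is no genuine obstacle here: the statement is a uniform-in-$y$ reformulation of Lemma~\ref{lem:Shen0} obtained by bounding $\abs{y}$ by $M$, with the only subtlety being the sign of the exponent on each side (one must note that a negative exponent reverses the monotonicity, which is precisely what makes the lower inequality go through). Once this is checked, taking the supremum/infimum over $y \in E$ yields the stated uniform bounds.
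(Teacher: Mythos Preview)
Your proof is correct and is exactly the argument the paper has in mind: the corollary is stated there as an immediate consequence of Lemma~\ref{lem:Shen0} with no proof given, and your specialisation $x=0$ together with the monotonicity check in the exponents is precisely how one makes ``immediate'' precise.
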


Another straightforward corollary tells us that the critical
radius function does not tend to vary too much at a local level.

\begin{cor}
\label{cor:LocalCritical}
Suppose that $V \in RH_{\frac{d}{2}}$. If $\eta>0$ and $x\in\R^d$, then $$\rho(x) \simeq_\eta \rho(y) \quad \text{ for all } \quad y \in B(x,\eta\rho(x)),$$ where the implicit constants may depend only on $d$, $\llbracket V\rrbracket_{d/2}$ and $\eta$. 
\end{cor}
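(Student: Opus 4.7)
The plan is to deduce this corollary directly from Lemma~\ref{lem:Shen0}, which provides a two-sided pointwise comparison of $\rho$ at distinct points in terms of the distance between them. Since the hypothesis $y \in B(x, \eta \rho(x))$ gives $|x - y|/\rho(x) \leq \eta$, the exponential-type factors in Lemma~\ref{lem:Shen0} collapse to constants depending only on $\eta$ (together with $d$ and $\llbracket V \rrbracket_{d/2}$, via $B_0$ and $M_0$).

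More concretely, first I would apply the upper bound in Lemma~\ref{lem:Shen0} to obtain
\[
\rho(y) \leq B_0 \rho(x) \br{1 + \tfrac{|x-y|}{\rho(x)}}^{\frac{M_0}{M_0+1}} \leq B_0 (1 + \eta)^{\frac{M_0}{M_0+1}} \rho(x),
\]
which gives $\rho(y) \lesssim_\eta \rho(x)$. Then I would apply the lower bound in Lemma~\ref{lem:Shen0} to obtain
\[
\rho(y) \geq B_0^{-1} \rho(x) \br{1 + \tfrac{|x-y|}{\rho(x)}}^{-M_0} \geq B_0^{-1} (1 + \eta)^{-M_0} \rho(x),
\]
which gives $\rho(x) \lesssim_\eta \rho(y)$. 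Combining the two inequalities yields $\rho(x) \simeq_\eta \rho(y)$, with implicit constants depending only on $d$, $\llbracket V \rrbracket_{d/2}$, and $\eta$, as claimed.

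There is no genuine obstacle here: the corollary is essentially a direct rephrasing of Lemma~\ref{lem:Shen0} in the regime where $y$ lies within a fixed multiple of $\rho(x)$ from $x$. The only thing to be careful about is tracking the dependence of the constants, which is transparent since $B_0$ and $M_0$ depend only on $d$ and $\llbracket V \rrbracket_{d/2}$, and the additional dependence on $\eta$ enters only through the factor $(1+\eta)^{M_0}$.
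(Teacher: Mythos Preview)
Your proof is correct and matches the paper's intent: the corollary is stated there as a straightforward consequence of Lemma~\ref{lem:Shen0} without an explicit proof, and your argument is precisely the direct substitution of $|x-y|/\rho(x)\leq\eta$ into that lemma's two-sided bound.
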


 Let $d_{V}(x,y)$ denote the Agmon distance for the
potential defined by
$$
d_{V}(x,y) := \inf_{\gamma} \int^{1}_{0} \rho(\gamma(t))^{-1}
\abs{\gamma'(t)} \, dt,
$$
where the infimum is taken over all curves in $\R^d$
connecting the points $x, \, y \in \R^{d}$. In the paper \cite{shen1999fundamental}, Shen obtained sharp estimates for the
fundamental solution of the Schr\"{o}dinger operator expressed in
terms of the Agmon distance. More recently, Mayboroda and Poggi
in \cite{mayboroda2019exponential}
have generalised these sharp estimates to the operator
$L_{A}^{V}$. These estimates are stated in the  theorem below. It
should be noted that only the upper estimate will be used in this paper.

\begin{thm}[{\cite[Corollary~6.16]{mayboroda2019exponential}}] 
 \label{thm:MaybSharp} 
If $V \in RH_{\frac{d}{2}}$, then there exist $\varepsilon, \, \varepsilon' > 0$ such that 
 $$
\frac{e^{-\varepsilon' d_{V}(x,y)}}{\abs{x - y}^{d - 2}} \lesssim
\mathcal{E}_{A}^{V}(x,y) \lesssim \frac{e^{-\varepsilon d_{V}(x,y)}}{\abs{x -
  y}^{d - 2}}
   \quad \text{ for all }x, \, y \in \R^{d},
$$
where $\varepsilon$, $\varepsilon'$ and the implicit constants may depend only on $d$, $\lambda$, $\Lambda$ and
$\llbracket V \rrbracket_{\frac{d}{2}}$.
\end{thm}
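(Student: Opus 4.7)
The plan is to establish both bounds via Agmon's weighted-energy method adapted to the operator $L_A^V$. Since only the upper bound is used in the paper, I will concentrate on it, obtaining the lower bound by a dual Harnack-chaining argument.

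For the upper bound, first observe that non-negativity of $V$ together with a comparison argument on the level of the defining form $\mathfrak{l}_A^V$ yields $\mathcal{E}_A^V(x,y) \leq \mathcal{E}_A^0(x,y) \lesssim |x-y|^{-(d-2)}$ by Lemma \ref{lem:PotentialFreeSize0}, which is the target estimate without the exponential factor. The task is then to extract $e^{-\varepsilon d_V(x,y_0)}$ for each fixed $y_0$. Set $u(x) = \mathcal{E}_A^V(x,y_0)$ on $\R^d \setminus \{y_0\}$ and let $\psi$ be a smooth regularization of $\varepsilon\, d_V(\cdot,y_0)$; by the very definition of the Agmon distance one has $\rho(x)\,|\nabla\psi(x)| \leq c_0$ for an arbitrarily small $c_0$ when $\varepsilon$ is small.

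The main analytic step is a weighted Caccioppoli estimate: testing the weak equation $L_A^V u = 0$ with $\eta^2 e^{2\psi} u$ for a cutoff $\eta$ supported away from $y_0$ and applying ellipticity together with Young's inequality gives
\[
\int e^{2\psi}\eta^2\bigl(|\nabla u|^2 + V u^2\bigr) \lesssim \int e^{2\psi} u^2 \bigl(|\nabla\psi|^2 + |\nabla\eta|^2\bigr).
\]
The delicate point is to absorb $|\nabla\psi|^2$ into $Vu^2$. This is done via a local Fefferman--Phong inequality at scale $\rho$: on a ball $B(z,c\rho(z))$,
\[
\int_{B(z,c\rho(z))} |\nabla\psi|^2 w^2 \lesssim \frac{c_0^2}{\rho(z)^2}\int_{B(z,c\rho(z))} w^2 \lesssim c_0^2 \int_{B(z,c\rho(z))}\bigl(|\nabla w|^2 + V w^2\bigr),
\]
which follows by combining Poincar\'e's inequality with the normalization $V(B(z,\rho(z))) \simeq \rho(z)^{d-2}$ coming from \eqref{eqtn:CritRad}, together with the reverse H\"older property via Lemma \ref{lem:RHn2}. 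Choosing $c_0$ small enough and a Whitney-type covering at scale $\rho$ produces a global energy bound with weight $e^{2\psi}$. Moser iteration on balls of radius $\simeq \rho(x)$, using Corollary \ref{cor:LocalCritical} to assert that $\psi$ varies by $O(1)$ on each such ball, upgrades this to the pointwise estimate $|u(x)|^2 e^{2\psi(x)} \lesssim \fint_{B(x,c\rho(x))} u^2 e^{2\psi}$. Combined with the unweighted bound $u \lesssim |\cdot - y_0|^{-(d-2)}$, this delivers the claimed upper bound.

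For the lower bound, I would chain the Harnack inequality for non-negative $L_A^V$-solutions on balls of radius $\simeq \rho$: start from the local estimate $\mathcal{E}_A^V(x,y_0) \gtrsim |x-y_0|^{-(d-2)}$ valid for $x$ in $B(y_0,c\rho(y_0))$ (obtained by perturbation from $\mathcal{E}_A^0$, for which the two-sided bound already holds at small scales), then propagate this lower bound along a near-geodesic for $d_V$ joining $y_0$ to $x$; each Harnack step costs a fixed multiplicative factor and the number of steps is $\simeq d_V(x,y_0)$, which produces exactly $e^{-\varepsilon' d_V(x,y_0)}$.

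The main obstacle I expect is the local Fefferman--Phong inequality in the second display above: it is precisely the reverse H\"older hypothesis $V \in RH_{d/2}$, mediated through the critical radius function, that places enough mass of $V$ at scale $\rho(x)$ to dominate the bound $c_0^2/\rho(x)^2$ on $|\nabla\psi|^2$. In the matrix coefficient setting of \cite{mayboroda2019exponential}, additional care is required to track the interplay between the H\"older regularity \eqref{eqtn:Holder} of $A$ and the potential term through the Moser iteration, which is why the result goes beyond Shen's original scalar argument.
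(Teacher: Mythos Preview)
The paper does not give its own proof of this statement: Theorem~\ref{thm:MaybSharp} is quoted directly as \cite[Corollary~6.16]{mayboroda2019exponential}, with no argument supplied. So there is nothing in the paper to compare your proposal against. Your sketch is, in broad strokes, a faithful outline of the Agmon weighted-energy method that Shen \cite{shen1999fundamental} and then Mayboroda--Poggi \cite{mayboroda2019exponential} actually use: weighted Caccioppoli with weight $e^{2\psi}$, absorption of the $|\nabla\psi|^2$ term via a Fefferman--Phong inequality at the critical-radius scale, and Moser iteration on balls of radius $\simeq\rho$; Harnack chaining for the lower bound. In that sense you are reconstructing the cited proof, not offering an alternative.

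One genuine inaccuracy in your final paragraph: you assert that the H\"older regularity \eqref{eqtn:Holder} of $A$ enters through the Moser iteration and that this is ``why the result goes beyond Shen's original scalar argument.'' This is wrong on both counts. The constants in Theorem~\ref{thm:MaybSharp} depend only on $d$, $\lambda$, $\Lambda$ and $\llbracket V\rrbracket_{d/2}$, not on $\alpha$ or $\tau$, and Mayboroda--Poggi in fact work with merely bounded measurable $A$; De~Giorgi--Nash--Moser theory requires no smoothness of the coefficients. What distinguishes \cite{mayboroda2019exponential} from \cite{shen1999fundamental} is the passage from $A=I$ to general elliptic $A$ (and to more general magnetic/matrix potentials), not any interplay with H\"older regularity. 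A minor second point: the inequality $\mathcal{E}_A^V \leq \mathcal{E}_A^0$ does hold, but it is not a formal consequence of $\mathfrak{l}_A^0 \leq \mathfrak{l}_A^V$ at the level of forms; one needs a maximum-principle or resolvent-identity argument to justify it.
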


 As the Agmon distance is currently defined, it is difficult to
 discern how the distance will vary for a particular potential.
 The  lemma below will demystify the Agmon distance by comparing
 it with the quantity $\br{1 + \frac{\abs{x - y}}{\rho(x)}}$.
 
\begin{lem}[{\cite[Remark 3.21]{shen1999fundamental}}]
  \label{lem:Shen}
If $V \in RH_{\frac{d}{2}}$,  then
  \begin{eqnarray}
    \label{eqtn:lem:Shen1}
    &\displaystyle &d_{V}(x,y) \lesssim \br{1 + \frac{\abs{x - y}}{\rho(x)}}^{M_{0} + 1}  \textrm{ for all }\, x, \, y \in \R^{d},\\  \!\!\!\!\!\!\!\!\textrm{whilst} \nonumber \\
     \label{eqtn:lem:Shen2}
    &\displaystyle &d_{V}(x,y) \gtrsim  \br{1 + \frac{\abs{x - y}}{\rho(x)}}^{\frac{1}{M_{0} + 1}} \textrm{ for all }\, x, \, y \in \R^{d} \textrm{ when } \abs{x - y} \geq 
  \rho(x),
  \end{eqnarray}
where $M_{0}$ denotes the constant from Lemma~ \ref{lem:Shen0}, and the implicit constant in each estimate depends only on $d$ and $\llbracket V \rrbracket_{\frac{d}{2}}$.
\end{lem}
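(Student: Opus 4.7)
\smallskip

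\noindent\textbf{Proof proposal.} For the upper estimate \eqref{eqtn:lem:Shen1}, the plan is to test the infimum defining $d_V(x,y)$ against the straight line path $\gamma(t) = x + t(y-x)$ for $t \in [0,1]$. Then $|\gamma'(t)| = |x-y|$ and $|\gamma(t)-x| = t|x-y|$, so the upper estimate in Lemma~\ref{lem:Shen0} gives
\[
\rho(\gamma(t))^{-1} \leq B_{0}\,\rho(x)^{-1}\br{1 + t\,\tfrac{|x-y|}{\rho(x)}}^{M_{0}}.
\]
Substituting $u = 1 + t\,|x-y|/\rho(x)$ turns the defining integral into $B_{0}\int_{1}^{1+|x-y|/\rho(x)} u^{M_{0}}\,du$, which is bounded by $\tfrac{B_{0}}{M_{0}+1}(1+|x-y|/\rho(x))^{M_{0}+1}$, as required.

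For the lower estimate \eqref{eqtn:lem:Shen2}, the idea is to invoke the symmetric bound from Lemma~\ref{lem:Shen0} on \emph{every} admissible curve. Given any curve $\gamma$ from $x$ to $y$, I would reparameterize by arclength so that $\gamma : [0,L] \to \R^{d}$ with $|\gamma'(s)| \equiv 1$ and $L \geq |x-y|$; the integrand $\rho(\gamma)^{-1}|\gamma'|$ is a line integral and thus invariant under this reparameterization. The upper bound in Lemma~\ref{lem:Shen0} yields
\[
\rho(\gamma(s))^{-1} \geq B_{0}^{-1}\,\rho(x)^{-1}\br{1 + \tfrac{|\gamma(s)-x|}{\rho(x)}}^{-\frac{M_{0}}{M_{0}+1}},
\]
and the triangle inequality along $\gamma$ gives $|\gamma(s)-x| \leq s$, so the exponent being negative lets me replace $|\gamma(s)-x|$ by $s$ to lower-bound the integrand. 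Substituting $r = 1 + s/\rho(x)$ then produces
\[
\int_{0}^{L} \rho(\gamma(s))^{-1}\,ds \;\geq\; \tfrac{M_{0}+1}{B_{0}}\Brs{\br{1+\tfrac{L}{\rho(x)}}^{\frac{1}{M_{0}+1}} - 1}.
\]
Since $L \geq |x-y| \geq \rho(x)$ by hypothesis, the bracketed quantity is comparable to $(1 + |x-y|/\rho(x))^{1/(M_{0}+1)}$ (the subtracted $1$ is absorbed because $1 + L/\rho(x) \geq 2$). Taking the infimum over all curves $\gamma$ gives the desired bound on $d_{V}(x,y)$.

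The only mildly delicate point is the step $|\gamma(s)-x| \leq s$ combined with the fact that the exponent $-M_{0}/(M_{0}+1)$ is negative, which is what allows the monotone replacement that removes the curve-dependence. After that, everything is a direct change of variables and the hypothesis $|x-y| \geq \rho(x)$ is used precisely to discard the $-1$ coming from the antiderivative; no further estimates on the geometry of $\gamma$ are needed.
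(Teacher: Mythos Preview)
Your argument is correct. The paper itself does not prove this lemma; it is simply quoted from Shen \cite[Remark~3.21]{shen1999fundamental} without proof, so there is no in-paper argument to compare against. Your approach---testing the straight-line segment for the upper bound and using an arclength reparameterisation together with the two-sided estimate of Lemma~\ref{lem:Shen0} for the lower bound---is precisely the standard one, and the computations are clean. One small remark on wording: in the first paragraph you invoke what you call ``the upper estimate in Lemma~\ref{lem:Shen0}'' to bound $\rho(\gamma(t))^{-1}$ from above, but this actually uses the \emph{lower} bound on $\rho$ in that lemma (the left-hand inequality); the inequality you write down is nonetheless correct.
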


\subsection{The David--Mattila Lattice}

 In the paper
\cite{david2000removable}, David and Mattila
introduced a system of cubes on the support of the measure $\mu$ that
were analagous to the standard
dyadic cubes on $\R^{d}$. Let $C_{0}, \, A > 1$  with $A > 5000 C_{0}$. These
constants will be the parameters of the lattice.

\begin{thm}[{\cite[Theorem~3.2]{david2000removable}}]
  \label{thm:DavidMattila}
  There exists a sequence of partitions of $\supp  \mu$,
  $\mathcal{D} = \cup_{k \geq k_{0}} \mathcal{D}_{k}$, into Borel subsets $Q$
  with the following properties.
  \begin{enumerate}
    \item[$\bullet$]  $\mathcal{D}_{k_{0}} = \lb Q_{k_{0}} \rb$ where $Q_{k_{0}} :=
  \supp  \mu$.
\item[$\bullet$] For each $k \geq k_{0}$, $\supp  \mu$ is
  the disjoint union of the sets $Q \in \mathcal{D}_{k}$.  
  \item[$\bullet$] If $k_{0} \leq k < l$, $Q \in \mathcal{D}_{k}$  and
    $R \in \mathcal{D}_{l}$ then either $Q \cap R = \emptyset$ or $R
    \subset Q$.
\item[$\bullet$] For each $k \geq k_{0}$ and $Q \in \mathcal{D}_{k}$,
  there is a ball $B(Q) := B(x_{Q}, r(Q))$ with $x_{Q} \in
  \supp  \mu$,
  $$
A^{-k} \leq r(Q) \leq C_{0} A^{-k},
$$
$$
\supp  \mu \cap B(Q) \subset Q \subset \supp  \mu
\cap 28 B(Q) = \supp  \mu \cap B(x_{Q},28 r(Q))
$$
and the balls $5 B(Q)$ for $Q \in \mathcal{D}_{k}$ are all disjoint.

\item[$\bullet$] The cubes $Q \in \mathcal{D}_{k}$ have small
  boundaries in the following sense. For each $Q \in \mathcal{D}_{k}$ and $l
  \geq 0$ set
  $$
N^{ext}_{l}(Q) := \lb x \in \supp  \mu  \setminus Q :
\mathrm{dist}(x,Q) < A^{-k-l} \rb,
$$
$$
N^{int}_{l}(Q) := \lb x \in Q : \mathrm{dist}(x, \supp  \mu
\setminus Q) < A^{-k-l} \rb
$$
and
$$
N_{l}(Q) := N^{ext}_{l}(Q) \cup N^{int}_{l}(Q).
$$
Then
$$
\mu \br{N_{l}(Q)} \leq \br{C^{-1} C_{0}^{-3d - 1} A}^{-l} \mu(90 B(Q)),
$$
where the constant $C > 0$ depends only on the dimension $d$.
\item[$\bullet$] Let $\mathcal{D}_{k}^{db}$ denote the set of cubes $Q
  \in \mathcal{D}_{k}$ for which
  $$
\mu(100 B(Q)) \leq C_{0} \mu(B(Q)).
$$
When $Q \in \mathcal{D}_{k} \setminus \mathcal{D}_{k}^{db}$ we have
that $r(Q) = A_{0}^{-k}$ and
$$
\mu \br{100 B(Q)} \leq C_{0}^{-l} \mu\br{100^{l + 1}B(Q)}
$$
for all $l \geq 1$ with $100^{l} \leq C_{0}$.
    \end{enumerate}
  \end{thm}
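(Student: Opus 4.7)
The plan is to follow the construction of Christ, adapted by David and Mattila to measures that are not necessarily doubling. First, at each scale $k \geq k_0$, I would choose a maximal $A^{-k}$-separated subset $\{x_Q\}_{Q \in \mathcal{D}_k}$ of $\supp\mu$ by a greedy/Zorn argument, and take $r(Q) := A^{-k}$ as a first guess so that the balls $5B(Q)$ are disjoint while the balls $28B(Q)$ already cover $\supp\mu$. The bound $r(Q) \leq C_0 A^{-k}$ will appear only for the non-doubling cubes where the radius is later rescaled. The cubes $Q$ themselves are then produced by nested nearest-neighbor assignment: at the coarsest scale $k_0$, put $Q_{k_0} = \supp\mu$, and inductively, assign each point of $\supp\mu$ at scale $k+1$ to its closest center $x_Q$ among children of its scale-$k$ parent, breaking ties by a fixed well-ordering. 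This automatically yields the partition, nesting, and inclusion properties $\supp\mu \cap B(Q) \subset Q \subset \supp\mu \cap 28B(Q)$.

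The main obstacle, and the key technical step, is the small boundary condition
\[
\mu(N_l(Q)) \leq \bigl(C^{-1} C_0^{-3d-1} A\bigr)^{-l} \mu(90 B(Q)).
\]
Following Christ and David--Mattila, I would derive this by an averaging argument across scales. For each cube $Q \in \mathcal{D}_k$, the set of points $x \in \supp\mu$ with $\mathrm{dist}(x, \partial Q) < A^{-k-l}$ is controlled by the set of points lying within $A^{-k-l}$ of a geodesic bisector between two nearby centers $x_Q$ and $x_{Q'}$ at scale $k$. Summing over the $O(C_0^{3d})$ neighbouring centers $x_{Q'}$ inside $90B(Q)$, and iterating the argument by redefining the cube slightly at each scale so that heavy boundary layers are absorbed into the interior, one produces the exponential decay in $l$. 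The constant $A > 5000 C_0$ is chosen large enough so that at each iteration the geometric factor $A^{-1}$ beats the combinatorial count $C_0^{3d+1}$.

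The doubling dichotomy in the final bullet is the easier step. Declare $Q \in \mathcal{D}_k^{db}$ when $\mu(100B(Q)) \leq C_0 \mu(B(Q))$, and leave $r(Q) = A^{-k}$ for such cubes. For $Q \in \mathcal{D}_k \setminus \mathcal{D}_k^{db}$, the failure of the doubling inequality at $B(Q)$ can be iterated: if $\mu(100B(Q)) > C_0 \mu(B(Q))$, compare $\mu(100B(Q))$ with $\mu(100^{l+1}B(Q))$ by chaining the doubling failure across concentric dilations, which yields $\mu(100B(Q)) \leq C_0^{-l} \mu(100^{l+1}B(Q))$ whenever $100^l \leq C_0$. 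For these non-doubling cubes, one rescales the radius up to $C_0 A^{-k}$ so that the stated range $A^{-k} \leq r(Q) \leq C_0 A^{-k}$ is maintained while preserving the containment and small-boundary properties proved above.

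The hardest part will unambiguously be calibrating the parameters in the small-boundary step, since one must ensure simultaneously that (i) the nearest-neighbor assignment gives a bona fide nested partition, (ii) the boundary layers at each scale decay geometrically with a base dominated by $A$, and (iii) the rescaling for non-doubling cubes does not destroy any of the previously established properties. Once these constants are fixed compatibly, the remaining conclusions follow either by direct verification or by the elementary arguments sketched above.
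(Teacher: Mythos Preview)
The paper does not prove this theorem; it is quoted verbatim from \cite[Theorem~3.2]{david2000removable} and used as a black box. So there is no ``paper's proof'' to compare with, only the original David--Mattila construction.

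That said, your sketch contains a genuine gap relative to the actual David--Mattila argument. You propose to obtain the small-boundary estimate by a Christ-type averaging over scales, but Christ's argument relies on the doubling property of $\mu$, which is precisely what is \emph{not} assumed here. The whole point of the David--Mattila lattice is to work for non-doubling measures. Their mechanism for small boundaries is different: for each center $x_Q$ at scale $k$ they do not fix $r(Q)=A^{-k}$ but instead choose $r(Q)$ in the range $[A^{-k}, C_0 A^{-k}]$ by a pigeonhole/stopping-time argument so that the annular layers $\{x:\mathrm{dist}(x,\partial B(x_Q,r))\text{ small}\}$ carry little $\mu$-mass compared with $\mu(90B(Q))$. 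The freedom in the radius, not an averaging across generations, is what produces the geometric decay in $l$ and explains why the constant $C_0^{3d+1}$ appears.

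You also have the radius rescaling backwards in the last bullet. The theorem asserts that \emph{non-doubling} cubes have the minimal radius $r(Q)=A^{-k}$; it is the (potentially) doubling cubes whose radii may have been enlarged during the pigeonhole step. The iterated estimate $\mu(100B(Q))\leq C_0^{-l}\mu(100^{l+1}B(Q))$ for non-doubling $Q$ then follows because, with $r(Q)$ minimal, none of the intermediate radii in $[A^{-k},C_0A^{-k}]$ were selected by the stopping rule, which forces doubling failure at each of those scales. Your ``chaining'' idea is the right intuition, but it is a consequence of the radius selection, not a separate rescaling performed after the fact.
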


  For $Q \in \mathcal{D}_{k}$, we define the length of $Q$ to be the
  quantity
  $$
l(Q) := 56 C_{0} A_{0}^{-k}.
$$
The point $x_{Q}$ is called the center of the
cube $Q$. The unique $Q' \in \mathcal{D}_{k - 1}$ such that $Q \subset
Q'$ is called the parent of the cube $Q$. Similarly, the cubes $R \in
\mathcal{D}_{k + 1}$ for which $R \subset Q$ will be called the
children of $Q$. We will use the shorthand notation $B_{Q} := 28 B(Q)$.

Set 
$$
\mathcal{D}^{db} := \bigcup_{k \geq k_{0}} \mathcal{D}_{k}^{db}.
$$
For $Q \in \mathcal{D}$ let $\mathcal{D}(Q)$ denote the cubes in
$\mathcal{D}$ that are contained in $Q$. Also let $\mathcal{D}^{db}(Q)
:= \mathcal{D}^{db} \cap \mathcal{D}(Q)$. From this point onwards,
$A_{0}$ will be set large enough so that it satisfies
$$
C^{-1} C_{0}^{-3 d - 1} A_{0} > A_{0}^{\frac{1}{2}} > 10.
$$
It will then follow that for any $0 < \lambda \leq 1$,
\begin{align}\begin{split}  
    \label{eqtn:SmallBoundary}
      &\mu(\lb x \in Q : \mathrm{dist}(x, \supp \mu \setminus Q)
  \leq \lambda l(Q) \rb) \\ & \quad + \mu(\lb x \in 3.5 B_{Q} \setminus Q :
  \mathrm{dist}(x,Q) \leq \lambda l(Q) \rb) \leq c
  \lambda^{\frac{1}{2}} \mu(3.5 B_{Q}).
 \end{split}\end{align}

 \section{Kernel Estimates}
 \label{sec:KernelEstimates}

Throughout this section we continue to consider a coefficient matrix $A$ on $\R^d$ for some $d\geq3$ satisfying the assumptions in Section \ref{sect:Prelim} with constants $\lambda, \, \Lambda,\, \alpha, \, \tau > 0$. We also assume that $V$ is a fixed non-negative locally integrable function on $\R^d$ that belongs to the reverse H\"older class $RH_d$. In particular, the self-improvement property in Proposition~\ref{prop:SelfImprovement} implies that there exists $\delta>0$ such that $V \in RH_{d+\delta}$ and the constant $\beta := 2 - \frac{d}{d+\delta}$ is henceforth fixed. Note that $\beta \in (1,2)$ whilst by Lemma~\ref{lem:RHn} there is  the following volume comparison estimate
\begin{equation}
  \label{eqtn:beta}
  V(B(x,r)) \lesssim \br{\frac{r}{R}}^{d - 2 + \beta}V(B(x,R))
\end{equation}
for all $x \in \R^{d}$ and $0< r < R$. To simplify notation, we now set
$$
L:=L_{A}^{V}
\qquad\textrm{and}\qquad
\mathcal{E}:=\mathcal{E}_{A}^{V},
$$
whilst  $L_{A}^{0}$ and $\mathcal{E}_{A}^{0}$ denote the case when $V$ is identically 0.

Our first estimate enables us to deduce exponential decay estimates for the kernel $\nabla_{1} \mathcal{E}$ from those for $\mathcal{E}$ in Theorem~\ref{thm:MaybSharp} obtained by Mayboroda and Poggi in \cite{mayboroda2019exponential}. The proof below relies on ideas from Shen's work in \cite{shen1999fundamental}.

  \begin{prop}
    \label{prop:Size}
    Let $R>0$. If $x, \, y\in \R^{d}$ and $0<r \leq \min\{R,\abs{x - y}\}$, then
    \begin{equation}
      \label{eqtn:Size}
\abs{\nabla_{1} \mathcal{E}(x,y)} \lesssim_{R} 
\frac{1}{r} \|\mathcal{E}(\cdot,y)\|_{L^\infty(B(x,r/2))} \br{1 +
  \frac{V(B(x,r))}{r^{d - 2}}},
\end{equation}
where the implicit constant depends only on $d$, $\lambda$, $\Lambda$, $\tau$, $\llbracket V\rrbracket_{d/2}$ and $R$.
\end{prop}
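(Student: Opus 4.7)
Since $r \leq \abs{x-y}$, the point $y$ lies outside the ball $B:=B(x,r/2)$, so $u(z):=\mathcal{E}(z,y)$ is a weak solution of $L u=0$ on a neighbourhood of $\overline{B}$. Rewriting this as $L_{A}^{0} u = -Vu$ on $B$, the natural strategy (in the spirit of Shen) is to split off the potential perturbation: define
\[
u_{2}(z) := -\int_{B} \mathcal{E}_{A}^{0}(z,w)\, V(w)\, u(w)\, dw \quad\text{and}\quad u_{1} := u-u_{2}\ \text{on}\ B.
\]
Then $L_{A}^{0} u_{2} = -Vu\,\mathbf{1}_{B}$ globally, and hence $L_{A}^{0} u_{1}=0$ on $B$. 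The plan is to estimate $\nabla u_{1}(x)$ by the H\"older/Campanato-type bound for $L_{A}^{0}$-harmonic functions, and $\nabla u_{2}(x)$ directly via the kernel estimate on $\nabla_{1}\mathcal{E}_{A}^{0}$.

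For $u_{2}$, use Lemma~\ref{lem:PotentialFreeSize}(i), which is applicable since $\abs{x-w}\leq r/2\leq R$ for $w\in B$, to dominate
\[
\abs{\nabla u_{2}(x)} \lesssim_{R} \|u\|_{L^{\infty}(B)} \int_{B} \frac{V(w)}{\abs{x-w}^{d-1}}\, dw,
\]
and then invoke \eqref{eqtn:RHn2} (which applies because $V\in RH_{d}$) to bound the integral by $V(B(x,r/2))/(r/2)^{d-1}\lesssim V(B(x,r))/r^{d-1}$, yielding exactly the right form $r^{-1}\cdot V(B(x,r))/r^{d-2}$. An analogous computation using Lemma~\ref{lem:PotentialFreeSize0} and \eqref{eqtn:0.4} (valid since $V\in RH_{d}\subset RH_{d/2}$), together with the doubling of $V$ to absorb the shift of centres for $z\in B$ into $V(B(x,r))$, will give
\[
\|u_{2}\|_{L^{\infty}(B)} \lesssim \|u\|_{L^{\infty}(B)} \, \frac{V(B(x,r))}{r^{d-2}}.
\]

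For $u_{1}$, apply \eqref{eqtn:Interior2} of Theorem~\ref{thm:Interior} on the ball $B(x,r/4)$, whose double is $B$, to obtain
\[
\abs{\nabla u_{1}(x)} \lesssim_{R} \frac{1}{r}\Bigl(\fint_{B} \abs{u_{1}}^{2}\Bigr)^{\!1/2} \lesssim \frac{1}{r} \|u_{1}\|_{L^{\infty}(B)} \lesssim \frac{1}{r}\|u\|_{L^{\infty}(B)} \Bigl(1 + \frac{V(B(x,r))}{r^{d-2}}\Bigr),
\]
using the previous bound on $\|u_{2}\|_{L^{\infty}(B)}$ via the triangle inequality. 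Adding the bounds on $\abs{\nabla u_{1}(x)}$ and $\abs{\nabla u_{2}(x)}$ delivers \eqref{eqtn:Size}.

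I expect the main technical nuisance to be bookkeeping: verifying that the Newtonian-type construction of $u_{2}$ is admissible (i.e.\ $u_{2}\in W^{1,2}_{\loc}$ with the desired distributional equation, which follows since $u$ is bounded on $B$ away from $y$ and $V\in L^{1}_{\loc}$), and carefully tracking how doubling of $V$ and the restriction $r\leq R$ feed into the implicit constants. Once that is handled, the proof is a clean two-term decomposition estimate.
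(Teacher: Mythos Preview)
Your proposal is correct and follows essentially the same approach as the paper: your $u_{1}$ is precisely the paper's auxiliary function $v(\xi)=u(\xi)+\int_{B}\mathcal{E}_{A}^{0}(\xi,\zeta)u(\zeta)V(\zeta)\,d\zeta$, and the estimates you outline for $\nabla u_{1}$, $\nabla u_{2}$, and $\|u_{2}\|_{L^{\infty}(B)}$ match the paper's treatment of $\nabla v$, the integral remainder, and $\|v\|_{L^{\infty}(B)}$ line for line. The bookkeeping you anticipate (that $u_{1}\in W^{1,2}$ and satisfies $L_{A}^{0}u_{1}=0$ weakly) is handled in the paper by a reference to Shen's argument in \cite{shen1999fundamental}.
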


\begin{prf}  
 Fix $x, \, y \in \R^{d}$. Let $R>0$, suppose that $0 < r \leq \min\{R,\abs{x - y}\}$ and consider the ball $B:= B(x,r/2)$. Define
\begin{equation}\label{defu}
u(\xi) := \mathcal{E}(\xi,y)
\quad \textrm{ for all }\quad \xi \in B
\end{equation}
and note that $-\mathrm{div}A\nabla u + V u = 0$ in $B$. Next, define
\begin{equation}\label{defv}
v(\xi) := u(\xi) + \int_{B} \mathcal{E}_{A}^{0}(\xi,\zeta) u(\zeta) V(\zeta) \, d\zeta
\quad \textrm{ for all }\quad \xi \in B
\end{equation}
and note that the arguments on page 537 in the proof of Lemma~2.20 in \cite{shen1999fundamental} show that $-\mathrm{div}A\nabla v = 0$ in $B$. In particular, the proof that $v\in W^{1,2}(B)$ relies on the first estimate in Lemma \ref{lem:PotentialFreeSize} whilst Fubini's Theorem and \eqref{eqtn:Fundamental} show that
\begin{align*}
\int_B \langle A \nabla v, \nabla \phi \rangle 
&=\int_B \langle A \nabla u, \nabla \phi \rangle
+ \int_B \br{\int_B \langle A(\xi) \nabla_1 \mathcal{E}_{A}^{0}(\xi,\zeta), \nabla \phi(\xi) \rangle \, d\xi} u(\zeta) V(\zeta) \, d\zeta \\
&=\int_B \langle A \nabla u, \nabla \phi \rangle + \int_B \langle V u
                                                                                                                                               ,
                                                                                                                                               \phi
                                                                                                                                               \rangle
                                                                                                                                               = 0
\end{align*}
for all $\phi\in C_c^\infty(B)$.

The size estimate \eqref{eqtn:Interior2} in Theorem \ref{thm:Interior} applied to $v$ and the ball $B$ implies that
\begin{align}\begin{split}
 \label{eqtn:Size1}
    \abs{\nabla_{1} \mathcal{E}(x,y)} 
    &\leq \norm{\nabla u}_{L^{\infty}(\frac{1}{2}B)} \\
    &\leq \norm{\nabla v}_{L^{\infty}(\frac{1}{2}B)} + \sup_{\xi \in B} \int_{B} \abs{\nabla_{1}
   \mathcal{E}_{A}^{0}(\xi,\zeta)} \abs{u(\zeta)} V(\zeta) \, d\zeta \\
 &\lesssim_R \frac{1}{r} \norm{v}_{L^{\infty}(B)} +
 \norm{u}_{L^{\infty}(B)} \sup_{\xi \in B} \int_{B} \abs{\nabla_{1} \mathcal{E}_{A}^{0}(\xi,\zeta)} V(\zeta) \, d\zeta.
\end{split}\end{align}
If $\xi \in B(x,r/2)$, then since $r\leq R$ and $B(x,r/2) \subset B(\xi,r)\subset B(x,3r/2)$, the first estimate in Lemma \ref{lem:PotentialFreeSize} implies that 
\begin{align}\begin{split}
    \label{eqtn:Size2}
 \int_{B(x,r/2)} \abs{\nabla_{1} \mathcal{E}_{A}^{0}(\xi,\zeta)} V(\zeta) \, d\zeta 
\lesssim_R \int_{B(\xi,r)} \frac{V(\zeta)}{\abs{\xi - \zeta}^{d - 1}} \, d\zeta
\lesssim \frac{V(B(x,r))}{r^{d - 1}},
\end{split}\end{align}
where the second estimate uses \eqref{eqtn:RHn2} from Lemma \ref{lem:RHn2}, which requires $V\in RH_{d}$, and the doubling property \eqref{eqtn:0.3}. Estimates \eqref{eqtn:Size1} and \eqref{eqtn:Size2} combined show that
$$
\abs{\nabla_{1} \mathcal{E}(x,y)} \lesssim_R  \frac{1}{r}
\norm{v}_{L^{\infty}(B)} + \norm{u}_{L^{\infty}(B)}
\frac{V(B(x,r))}{r^{d - 1}}.
$$

To estimate $\norm{v}_{L^{\infty}(B)}$, we use Lemma \ref{lem:PotentialFreeSize0} to obtain
\begin{align*}\begin{split}
 \norm{v}_{L^{\infty}(B)} &\lesssim \norm{u}_{L^{\infty}(B)} \br{1 + \sup_{\xi \in B} \int_{B} \abs{\mathcal{E}_{A}^{0}(\xi,\zeta)} V(\zeta) \, d\zeta} \\
   &\lesssim \norm{u}_{L^{\infty}(B)} \br{1 + \sup_{\xi \in B} \int_{B} \frac{V(\zeta)}{\abs{\xi - \zeta}^{d - 2}} \, d\zeta} \\
 &\lesssim \norm{u}_{L^{\infty}(B)} \br{1 + \frac{V(B(x,r))}{r^{d - 2}}},
 \end{split}\end{align*}
where the last estimate is similar to \eqref{eqtn:Size2} except \eqref{eqtn:0.4}, which only requires $V\in RH_{\frac{d}{2}}$, is used instead of \eqref{eqtn:RHn2}. Altogether, we have
\[
\abs{\nabla_{1} \mathcal{E}(x,y)} \lesssim_{R} 
\frac{1}{r} \|u\|_{L^\infty(B(x,r/2))} \br{1 +
  \frac{V(B(x,r))}{r^{d - 2}}},
\]
which proves \eqref{eqtn:Size}.
 \end{prf}
  
We now deduce the aforementioned size estimates for the kernel $ \nabla_{1} \mathcal{E}$.

\begin{thm}
\label{thm:Size}
Let $R > 0$ and let $\varepsilon > 0$ denote the constant from Theorem \ref{thm:MaybSharp}. If $x$, $y \in \R^{d}$, then the following estimates hold:
\begin{enumerate}
\item $\displaystyle \abs{\nabla_{1} \mathcal{E}(x,y)} \lesssim_{R} {e^{-(\varepsilon/2) d_{V}(x,y)}}{\abs{x - y}^{-(d - 1)}}$ \textrm{whenever }$\abs{x - y} \leq R$;
\item $\displaystyle \abs{\nabla_{1} \mathcal{E}(x,y)} \lesssim_{R} {e^{-(\varepsilon/2) d_{V}(x, y)}}{\abs{x - y}^{-(d - 2)}}$ \textrm{whenever }$\abs{x - y} \geq R$.
\end{enumerate}
The implicit constants in both cases may depend only on $d$, $\lambda$, $\Lambda$, $\tau$, $\llbracket V\rrbracket_{d/2}$ and $R$.
\end{thm}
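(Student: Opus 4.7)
The plan is to apply Proposition~\ref{prop:Size} with a radius $r$ chosen as the minimum of the natural outer scale and the critical radius $\rho(x)$, feed in the Agmon exponential bound from Theorem~\ref{thm:MaybSharp}, and absorb any leftover algebraic factor of $|x-y|/\rho(x)$ (or $R/\rho(x)$) into the exponential using the lower bound on $d_V$ from Lemma~\ref{lem:Shen}\,(ii). Concretely, for (1) I would set $r := \min\{|x-y|/2,\,\rho(x)\}$ and for (2) I would set $r := \min\{R,\,\rho(x)\}$; in either case $r \leq \rho(x)$ and $r \leq \min\{R,|x-y|\}$, so Proposition~\ref{prop:Size} applies.

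Two reductions follow at once. First, since $r \leq \rho(x)$, the volume estimate \eqref{eqtn:beta} together with Remark~\ref{rmk:Critical} yields $V(B(x,r))/r^{d-2} \lesssim (r/\rho(x))^\beta \leq 1$, so the bracketed factor in \eqref{eqtn:Size} contributes nothing. Second, for each $\xi \in B(x,r/2)$ we have $|x-\xi|/\rho(x) \leq 1/2$, and hence Lemma~\ref{lem:Shen}\,(i) gives $d_V(x,\xi) \lesssim 1$; combined with the triangle inequality for $d_V$ and the fact that $|\xi-y| \simeq |x-y|$ (immediate from the choice of $r$), Theorem~\ref{thm:MaybSharp} upgrades to $\|\mathcal{E}(\cdot,y)\|_{L^\infty(B(x,r/2))} \lesssim {e^{-\varepsilon d_V(x,y)}}/{|x-y|^{d-2}}$. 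Plugging these into \eqref{eqtn:Size} reduces (1) to showing $1/r \lesssim_R e^{(\varepsilon/2)d_V(x,y)}/|x-y|$ and (2) to showing $1/r \lesssim_R e^{(\varepsilon/2)d_V(x,y)}$.

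The final step handles the sub-cases where $r = \rho(x)$ rather than the outer scale. In (1) this occurs precisely when $\rho(x) < |x-y|/2$, so $|x-y| > \rho(x)$ and Lemma~\ref{lem:Shen}\,(ii) gives $|x-y|/\rho(x) \lesssim d_V(x,y)^{M_0+1}$. Since $t^{M_0+1} \lesssim e^{(\varepsilon/2)t}$ uniformly on $[0,\infty)$, the excess factor $|x-y|/\rho(x)$ is absorbed by splitting one factor of $e^{-\varepsilon d_V}$ into two copies of $e^{-(\varepsilon/2)d_V}$, producing (1). Case (2) is analogous: when $\rho(x) < R$, the relation $|x-y| \geq R > \rho(x)$ allows us to invoke Lemma~\ref{lem:Shen}\,(ii) with $R/\rho(x) \leq |x-y|/\rho(x) \lesssim d_V(x,y)^{M_0+1}$, so $1/\rho(x) \lesssim_R d_V(x,y)^{M_0+1} \lesssim e^{(\varepsilon/2)d_V(x,y)}$, delivering (2). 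The principal obstacle is arranging the sub-case dichotomy so that whenever a factor of $1/\rho(x)$ actually appears the configuration $|x-y| \geq \rho(x)$ holds and Lemma~\ref{lem:Shen}\,(ii) is available; this is precisely the role of the minimum with $\rho(x)$ in the definition of $r$.
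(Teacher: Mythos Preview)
Your proposal is correct and follows essentially the same approach as the paper's proof: apply Proposition~\ref{prop:Size} at the scale $r=\min\{\text{outer scale},\rho(x)\}$, control the bracketed volume term via Remark~\ref{rmk:Critical}, pass the Agmon exponential through the sup-norm using $d_V(x,\xi)\lesssim 1$ on $B(x,r/2)$, and absorb any residual factor $|x-y|/\rho(x)$ into $e^{(\varepsilon/2)d_V}$ via Lemma~\ref{lem:Shen}\eqref{eqtn:lem:Shen2}. The only cosmetic differences are that the paper writes out the four sub-cases explicitly rather than packaging them into $r=\min\{\cdots\}$, uses $r=|x-y|$ rather than $|x-y|/2$ in Case~1.2, and invokes Lemma~\ref{lem:RHn} with $q=d/2$ instead of \eqref{eqtn:beta} for the volume bound; none of these affect the substance.
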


\begin{prf}
Let $R>0$ and fix $x, \, y \in \R^{d}$. To prove the first estimate, suppose that $\abs{x - y} \leq R$ and consider the following two cases:

\textit{Case 1.1}: Assume that $\rho(x) \leq \abs{x - y}$. Applying Proposition~\ref{prop:Size} in the case $r=\rho(x)$ followed by Remark \ref{rmk:Critical} and Theorem \ref{thm:MaybSharp} shows that
 \begin{align}\begin{split}
     \label{eqtn:thm:Size0}
  \abs{\nabla_{1} \mathcal{E}(x,y)} 
  &\lesssim_R \frac{1}{\rho(x)}
 \|{\mathcal{E}(\cdot,y)}\|_{B(x,\rho(x)/2)} \br{1 +
   \frac{V(B(x,\rho(x)))}{\rho(x)^{d - 2}}} \\
 &\lesssim \frac{1}{\rho(x)} \sup_{\xi \in B(x, \rho(x)/2)}
 \frac{e^{- \varepsilon d_{V}(\xi,y)}}{\abs{\xi - y}^{d - 2}},
\end{split}\end{align}
where $\varepsilon > 0$ is the constant from Theorem \ref{thm:MaybSharp}.

If $\xi \in B(x,\rho(x)/2)$, then $\abs{x - y} \leq 2 \abs{\xi - y}$
and $d_{V}(x, y) \leq d_{V}(\xi,y) + C$ for some $C > 0$, since
$
\abs{x - y} \leq \abs{\xi - y} + \rho(x)/2 \leq \abs{\xi - y} + {\abs{x - y}}/{2}
$
whilst $d_{V}(\xi,x) \lesssim 1$ by \eqref{eqtn:lem:Shen1} in Lemma \ref{lem:Shen}. Using these estimates in conjunction with \eqref{eqtn:thm:Size0} and  then applying \eqref{eqtn:lem:Shen2} in Lemma \ref{lem:Shen}, which is valid because $\rho(x) \leq \abs{x - y}$, shows that
\begin{align}\label{eq:repeat}\begin{split}
 \abs{\nabla_{1} \mathcal{E}(x,y)} &\lesssim_R \frac{1}{\rho(x)}
\frac{e^{- \varepsilon d_{V}(x,y)}}{\abs{x- y}^{d -
    2}} \\
&\lesssim d_{V}(x,y)^{M_{0} + 1} \frac{e^{- \varepsilon
     d_{V}(x,y)}}{\abs{x - y}^{d - 1}} \\
 &\lesssim \frac{e^{-(\varepsilon/2)  d_{V}(x,y)}}{\abs{x -
     y}^{d - 1}},
\end{split}\end{align}
where $M_{0} > 0$ is the constant from Lemma \ref{lem:Shen}, as required.

\vspace*{0.1in}

\textit{Case 1.2}: Assume that $\rho(x) \geq \abs{x - y}$. Applying Proposition~\ref{prop:Size} in the case $r = \abs{x - y}$ shows that
$$
 \abs{\nabla_{1} \mathcal{E}(x,y)} \lesssim_R \frac{1}{\abs{x -
     y}} \|\mathcal{E}(\cdot,y)\|_{L^\infty(B(x,r/2))} \br{1
   + \frac{V(B(x, \abs{x - y}))}{\abs{x - y}^{d - 2}}}.
$$
Lemma \ref{lem:RHn} with $q=d/2$ and Remark \ref{rmk:Critical} show that
$$
\frac{V(B(x,\abs{x - y}))}{\abs{x - y}^{d - 2}}
\lesssim \frac{V(B(x, \rho(x)))}{\rho(x)^{d - 2}} \lesssim 1.
$$
Also, if $\xi \in B(x,r/2)$, then $\abs{x - y} \leq 2\abs{\xi - y}$
and $d_{V}(x,y) \leq d_{V}(\xi,y) + C$ for some $C > 0$, since $\abs{x - y} \leq \abs{\xi - y} + \abs{\xi - x} \leq \abs{\xi - y}  + {\abs{x - y}}/{2}$ whilst
$d_{V}(\xi,x) \lesssim 1$ by \eqref{eqtn:lem:Shen1} in Lemma \ref{lem:Shen}.
Altogether, these estimates and Theorem \ref{thm:MaybSharp} show that
$$
\abs{\nabla_{1} \mathcal{E}(x,y)} \lesssim_R \frac{e^{- \varepsilon d_{V}(x,y)}}{\abs{x -
    y}^{d - 1}},
$$
which concludes the proof of the first estimate stated in the theorem.

\vspace*{0.1in}

To prove the remaining estimate, now suppose that $\abs{x - y} \geq R$ and consider the following two cases:

\textit{Case 2.1}: Assume that $\rho(x) \leq R$. Applying Proposition~\ref{prop:Size} once again in the case $r= \rho(x)$ shows that \eqref{eqtn:thm:Size0} holds. Moreover, since $\rho(x) \leq R \leq \abs{x - y}$, the arguments from Case 1.1 apply here to show that \eqref{eq:repeat} holds, hence
\[
\abs{\nabla_{1} \mathcal{E}(x,y)}
\lesssim_R \frac{e^{-(\varepsilon/2) d_{V}(x,y)}}{\abs{x - y}^{d - 1}}
\lesssim_R \frac{e^{- (\varepsilon/2) d_{V}(x,y)}}{\abs{x - y}^{d - 2}},
\]
as required.

\vspace*{0.1in}

\textit{Case 2.2}: Assume that $\rho(x) \geq R$. Applying Proposition~\ref{prop:Size} in the case $r=R$, followed by Lemma \ref{lem:RHn}, Remark \ref{rmk:Critical} and Theorem \ref{thm:MaybSharp} (as in Case 1.2 but relying instead on $R\leq \min\{\abs{x - y},\rho(x)\}$) shows that
\[ 
 \abs{\nabla_{1} \mathcal{E}(x,y)} 
 \lesssim_R \frac{1}{R}\|\mathcal{E}(\cdot,y)\|_{L^\infty(B(x,R/2))} \br{1 +
  \frac{V(B(x,R))}{R^{d - 2}}} 
 \lesssim_R \frac{e^{- \varepsilon d_{V}(x,y)}}{\abs{x - y}^{d - 2}},
\]
 which concludes the proof of the second estimate stated in the theorem.
 \end{prf}

Our proof of Theorem \ref{thm:Main} will also require H\"older regularity for the kernel $\nabla_{1} \mathcal{E}$. This will be deduced from the following proposition. The proof again relies on ideas from Shen's work in \cite{shen1999fundamental}.
 
\begin{prop} 
 \label{prop:Reg} 
Let $R> 0$ and $\gamma = \min \br{\alpha, \beta - 1}$. If $y\in\R^d$ and $x$, $x' \in B(y,R)$ satisfy $\abs{x - x'} \leq \frac{1}{2} \abs{x - y}$, then
 \begin{multline*}
\abs{\nabla_{1} \mathcal{E}(x,y) - \nabla_{1} \mathcal{E} (x',y)}
+ \abs{\nabla_{1} \mathcal{E}(y, x) - \nabla_{1} \mathcal{E}(y,x')} \\
\lesssim_{R}
\frac{\abs{x - x'}^{\gamma}}{\abs{x - y}^{\gamma + 1}}
\|\mathcal{E}(\cdot,y)\|_{L^\infty(B(x,\frac{3}{4}\abs{x - y}))} \br{1 +
  \frac{V(B(x,\abs{x- y}))}{\abs{x - y}^{d - 2}}},
\end{multline*}
where the implicit constant may depend only on $d$, $\lambda$, $\Lambda$, $\tau$, $\llbracket V\rrbracket_{d/2}$ and $R$.
  
\end{prop}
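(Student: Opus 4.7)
The plan is to adapt Shen's perturbation scheme from the proof of Proposition~\ref{prop:Size} to produce H\"older, rather than merely size, estimates for $\nabla_{1} \mathcal{E}$. I describe the argument for the first difference in detail; the second difference is handled analogously via a transpose argument.

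\textbf{Setup and H\"older regularity of the perturbed solution.} Fix $y$ and $x, x' \in B(y, R)$ with $|x-x'| \leq \tfrac{1}{2}|x-y|$, and write $r := |x-y|$ and $B := B(x, \tfrac{3}{4}r)$, so that $y \notin B$. Put $u(\xi) := \mathcal{E}(\xi, y)$ and
\[
v(\xi) := u(\xi) + I(\xi), \qquad I(\xi) := \int_{B} \mathcal{E}_{A}^{0}(\xi, \zeta)\, u(\zeta)\, V(\zeta)\, d\zeta,
\]
on $B$. Exactly as in the proof of Proposition~\ref{prop:Size}, $v$ is $L_{A}^{0}$-harmonic in $B$ with $\|v\|_{L^{\infty}(B)} \lesssim \|u\|_{L^{\infty}(B)}\bigl(1 + V(B(x, r))/r^{d-2}\bigr)$. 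Since $u = v - I$, it suffices to control $\nabla v(x) - \nabla v(x')$ and $\nabla I(x) - \nabla I(x')$ separately. For the former, applying the Campanato-type bound \eqref{eqtn:Interior1} on a suitable subball of $B$ of radius comparable to $r$ which contains both $x$ and $x'$ and whose double lies in $B$ yields
\[
|\nabla v(x) - \nabla v(x')| \lesssim_{R} \frac{1}{r}\left(\frac{|x-x'|}{r}\right)^{\alpha} \|v\|_{L^{\infty}(B)},
\]
which is already of the target shape, with H\"older exponent $\alpha \geq \gamma$.

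\textbf{Regularity of the perturbation.} I split $\nabla I(x) - \nabla I(x')$ at the scale $|x-\zeta| = 2|x-x'|$. On the near region $D := B \cap B(x, 2|x-x'|)$, applying Lemma~\ref{lem:PotentialFreeSize}(1) separately to $\nabla_{1}\mathcal{E}_{A}^{0}(x, \zeta)$ and $\nabla_{1} \mathcal{E}_{A}^{0}(x', \zeta)$, followed by Lemma~\ref{lem:RHn2} and the reverse H\"older volume comparison \eqref{eqtn:beta}, produces a contribution of the form $r^{-1}(|x-x'|/r)^{\beta-1} V(B(x, r))/r^{d-2} \|u\|_{L^{\infty}(B)}$. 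On the far region $B \setminus D$, I use the first-variable H\"older estimate
\[
|\nabla_{1}\mathcal{E}_{A}^{0}(x, \zeta) - \nabla_{1}\mathcal{E}_{A}^{0}(x', \zeta)| \lesssim_{R} \frac{|x-x'|^{\alpha}}{|x-\zeta|^{d-1+\alpha}},
\]
which follows from applying \eqref{eqtn:Interior1} to $\mathcal{E}_{A}^{0}(\cdot, \zeta)$ on the ball $B(x, |x-\zeta|/4)$ together with Lemma~\ref{lem:PotentialFreeSize0}. A dyadic annular decomposition centred at $x$, combined with Lemma~\ref{lem:RHn}, yields a contribution of the form $r^{-1}(|x-x'|/r)^{\alpha} V(B(x, r))/r^{d-2} \|u\|_{L^{\infty}(B)}$. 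Summing this with the near-region bound and with the estimate from the previous paragraph, and factoring out $\|u\|_{L^{\infty}(B)}\bigl(1 + V(B(x, r))/r^{d-2}\bigr)$, produces the target estimate for the first difference with exponent $\gamma = \min(\alpha, \beta-1)$.

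\textbf{Second difference and main obstacle.} For $|\nabla_{1} \mathcal{E}(y, x) - \nabla_{1}\mathcal{E}(y, x')|$, observe that $w(\eta) := \mathcal{E}(\eta, x) - \mathcal{E}(\eta, x')$ is $L_{A}^{V}$-harmonic on $B(y, r/4)$ (since $x, x' \notin B(y, r/4)$), and the desired quantity equals $\nabla w(y)$. Running the Shen perturbation setup for $w$ on $B(y, r/4)$, and bounding the resulting $L^{\infty}$ norm of $w$ by integrating the first-variable gradient estimate for the transpose fundamental solution $\mathcal{E}_{A^{T}}^{V}$ along the segment from $x'$ to $x$ (using $\mathcal{E}(\eta, \cdot) = \mathcal{E}_{A^{T}}^{V}(\cdot, \eta)$ together with Proposition~\ref{prop:Size} applied with $A$ replaced by $A^{T}$, which satisfies the same hypotheses) and then invoking the same near/far decomposition, yields the second half of the estimate. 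The principal technical obstacle lies in the far-region dyadic sum, where the exponents $\alpha$ and $\beta-1$ must be carefully balanced: in the generic case, the geometric series converges cleanly to the correct decay, whereas in the borderline case $\alpha = \beta-1$ a logarithmic loss threatens to appear, which is absorbed by invoking Proposition~\ref{prop:SelfImprovement} to improve $\beta$ slightly and thereby recover the clean minimum $\gamma = \min(\alpha, \beta-1)$ in the statement.
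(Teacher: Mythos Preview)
Your treatment of the first difference $|\nabla_{1}\mathcal{E}(x,y)-\nabla_{1}\mathcal{E}(x',y)|$ is correct and essentially identical to the paper's: the same Shen perturbation $v=u+I$ on $B=B(x,\tfrac34 r)$, the Campanato estimate \eqref{eqtn:Interior1} for $\nabla v$, and the same near/far splitting for $\nabla I$ (the paper uses three regions---near $x$, near $x'$, far from both---but this is equivalent to your two-region split, since your near set $B\cap B(x,2r')$ already contains $B\cap B(x',2r')$ up to a doubling). Your handling of the borderline case $\alpha=\beta-1$ via Proposition~\ref{prop:SelfImprovement} also matches the paper.

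Your approach to the second difference departs from the paper's and does not quite deliver the stated bound. The paper simply reruns the same argument on $B(x,\tfrac34 r)$ with $\tilde u(\xi):=\mathcal{E}(y,\xi)$ and $\tilde v(\xi):=\tilde u(\xi)+\int_B \mathcal{E}_A^0(\zeta,\xi)\,\tilde u(\zeta)V(\zeta)\,d\zeta$, which satisfy the transpose equations $L_{A^*}^V\tilde u=0$ and $L_{A^*}^0\tilde v=0$ in $B$; this yields the target estimate directly with a single factor $(1+V(B(x,r))/r^{d-2})$. Your route instead applies the size estimate of Proposition~\ref{prop:Size} to $w(\eta)=\mathcal{E}(\eta,x)-\mathcal{E}(\eta,x')$ on $B(y,r/4)$, and then bounds $\|w\|_{L^\infty}$ by integrating $\nabla_1\mathcal{E}_{A^T}^V$ along the segment, which is a \emph{second} application of Proposition~\ref{prop:Size}. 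Stacking the two applications produces an extra factor $(1+V(B(x,r))/r^{d-2})$ and replaces $\|\mathcal{E}(\cdot,y)\|_{L^\infty(B)}$ by a supremum of $|\mathcal{E}(\eta,\xi)|$ over $\eta$ near $y$ and $\xi$ near $x$. This weaker outcome still suffices for Corollary~\ref{cor:Reg}, since the hypothesis $|x-y|\le\rho(x)$ bounds the $V$ factor and Theorem~\ref{thm:MaybSharp} controls both $L^\infty$ quantities by $r^{2-d}$, but it does not reproduce the proposition exactly as written. Finally, the phrase ``invoking the same near/far decomposition'' is out of place in your second step: once you are estimating a single gradient $\nabla w(y)$ rather than a difference, only the size estimate is needed and no further splitting occurs.
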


\begin{prf}  
Let $R>0$ and fix $y,\, x, \, x' \in \R^{d}$ as stated in the proposition. Next, setting $r := \abs{x - y}$ and $r' := \abs{x - x'}$, define the solutions $u$ and $v$ exactly as in \eqref{defu} and \eqref{defv} on the ball $B := B(x, 3r/4)$.

We then have $-\mathrm{div}A\nabla u + V u = 0$ and $-\mathrm{div}A\nabla v = 0$ in $B$. The H\"older regularity estimate \eqref{eqtn:Interior1} in Theorem \ref{thm:Interior} followed by Lemma \ref{lem:PotentialFreeSize0} then implies that
\begin{align*}\begin{split}
 \abs{\nabla v(x) - \nabla v(x')} 
 &\lesssim_R \frac{(r')^{\alpha}}{r^{1 + \alpha}} \|v\|_{L^\infty(B)} 
 \lesssim \frac{(r')^{\gamma}}{r^{1 + \gamma}} \|u\|_{L^\infty(B)} \br{\!1 + \sup_{\xi \in B} \int_{B} \frac{V(\zeta)}{\abs{\xi - \zeta}^{d - 2}} \, d\zeta\!}\!,
\end{split}\end{align*}
where the first estimate relies on the fact that $r\leq R$. If $\xi \in B(x,3r/4)$, then since $B(x,3r/4) \subset B(\xi,2r)\subset B(x,3r)$, it follows that 
\begin{align*}\begin{split}  
 \int_{B} \frac{V(\zeta)}{\abs{\xi - \zeta}^{d - 2}} \, d\zeta \leq \int_{B(\xi,2
  r)} \frac{V(\zeta)}{\abs{\xi - \zeta}^{d -2}} \, d \zeta
 \lesssim \frac{V(B(x,r))}{r^{d - 2}},
\end{split}\end{align*}
where the second estimate uses \eqref{eqtn:0.4} from Lemma \ref{lem:RHn2}, which requires $V\in RH_{\frac{d}{2}}$, and the doubling property \eqref{eqtn:0.3}. Altogether, this shows that
\begin{equation}
  \label{eqtn:Reg3}
  \abs{\nabla v(x) - \nabla v(x')} \lesssim \frac{(r')^{\gamma}}{r^{1
      + \gamma}} \|\mathcal{E}(\cdot,y)\|_{L^\infty(B)} \br{1 + \frac{V(B(x,r))}{r^{d - 2}}},
\end{equation}
as required.

We now define $w(\xi) := \int_{B} \mathcal{E}_{A}^{0}(\xi,\zeta) u(\zeta) V(\zeta) \, d\zeta$ for all $\xi \in B$ in order to estimate
\begin{align*}\begin{split}  
 \abs{\nabla w(x) - \nabla w(x')} &\leq \|u\|_{L^\infty(B)}
 \int_{B} \abs{\nabla_{1} \mathcal{E}_{A}^{0}(x,\zeta) - \nabla_{1}
   \mathcal{E}_{A}^{0}(x',\zeta)} V(\zeta) \, d\zeta \\
 &=:  \|\mathcal{E}(\cdot,y)\|_{L^\infty(B)} \br{I_{1} + I_{2} + I_{3}},
\end{split}\end{align*}
where $I_{1}$, $I_{2}$ and $I_{3}$ are defined by restricting the integral to the sets $B \cap B(x,2 r')$, $B \cap B(x',2r')$ and $\lb \zeta \in B : \abs{\zeta - x} \geq 2 r', \, \abs{\zeta - x'} \geq 2 r' \rb$, respectively.

\vspace*{0.1in}

To estimate $I_{1}$, we use the first estimate in Lemma \ref{lem:PotentialFreeSize}  to obtain
\begin{align*}\begin{split}  
 I_{1} &\lesssim_R \int_{B(x, 2r')} \frac{V(\zeta)}{\abs{\zeta - x}^{d
   - 1}} \, d\zeta + \int_{B \cap B(x,2r')} \frac{V(\zeta)}{\abs{\zeta -
   x'}^{d - 1}} \, d\zeta \\
&\lesssim \int_{B(x,2r')} \frac{V(\zeta)}{\abs{\zeta - x}^{d - 1}} \, d\zeta +
\int_{B(x',3 r')} \frac{V(\zeta)}{\abs{\zeta - x'}^{d - 1}} \, d\zeta \\
&\lesssim \frac{V(B(x,2 r'))}{\br{r'}^{d - 1}} + \frac{V(B(x',
  3 r'))}{\br{r'}^{d - 1}},
\end{split}\end{align*}
where we used \eqref{eqtn:RHn2} from Lemma \ref{lem:RHn2}, which requires $V\in RH_{d}$, in the third line. Noting that
$B(x',3 r') \subset B(x, 4 r')$ and using the volume comparison estimate \eqref{eqtn:beta} followed by the doubling property \eqref{eqtn:0.3}, we obtain
\begin{align}\begin{split}\label{eqtn:Reg4}  
 I_{1}  &\lesssim_R \br{\frac{r'}{r}}^{d - 2 + \beta} \frac{1}{\br{r'}^{d -
     1}} \br{V(B(x,2r)) + V(B(x,4r))} \\
 &\lesssim \br{\frac{r'}{r}}^{\beta - 1} \frac{1}{r}
 \frac{V(B(x,r))}{r^{d - 2}} \\
 &\leq \br{\frac{r'}{r}}^{\gamma} \frac{V(B(x,r))}{r^{d - 1}}
\end{split}\end{align}
where the last line uses that $\gamma < \beta -1$.

\vspace*{0.1in}

To estimate $I_2$, we use similar reasoning to that above for $I_1$ to obtain
\begin{equation}\label{I2}
I_2 \lesssim_R \br{\frac{r'}{r}}^{\gamma} \frac{V(B(x,r))}{r^{d - 1}}.
\end{equation}

\vspace*{0.1in}

To estimate $I_{3}$, we first use the H\"older regularity \eqref{eqtn:Interior1} in Theorem \ref{thm:Interior}, for the solution $\mathcal{E}_{A}^{0}(\cdot,\zeta)$ in the ball $B(x,\frac{3}{4}\abs{x - \zeta})$, followed by Lemma \ref{lem:PotentialFreeSize0} to obtain
\begin{align*}\begin{split}  
 \abs{\nabla_{1} \mathcal{E}_{A}^{0}(x,\zeta) - \nabla_{1} \mathcal{E}_{A}^{0}(x',\zeta)}
 &\lesssim_R \frac{\br{r'}^{\alpha}}{\abs{x - \zeta}^{1 + \alpha}} 
   \|\mathcal{E}_{A}^{0}(\cdot,\zeta)\|_{L^\infty(B(x,\frac{3}{4}\abs{x - \zeta}))}  \\
 &\lesssim \frac{\br{r'}^{\alpha}}{\abs{x - \zeta}^{1 + \alpha}} \sup_{\xi \in
 B\br{x,\frac{3}{4}\abs{x - \zeta}}} \frac{1}{\abs{\xi - \zeta}^{d - 2}} \\
 &\lesssim \frac{\br{r'}^{\alpha}}{\abs{x - \zeta}^{d - 1 + \alpha}}
 \end{split}\end{align*}
for all $\zeta \in B$, since $\abs{x - \zeta} \leq \abs{\xi - \zeta} + \frac{3}{4}\abs{x - \zeta}$ for all $\xi \in B\br{x,\frac{3}{4}\abs{x - \zeta}}$.

Using the above estimate, we obtain
\begin{align*}\begin{split}  
 I_{3} &= \int_{\{\zeta \in B : \abs{\zeta - x} \geq 2r', \, \abs{\zeta - x'} \geq 2 r'\}}  \abs{\nabla_{1} \mathcal{E}_{A}^{0}(x,\zeta) - \nabla_{1}
   \mathcal{E}_{A}^{0}(x',\zeta)} V(\zeta) \, d\zeta \\
 &\lesssim_R (r')^{\alpha} \int_{2r' \leq \abs{\zeta - x} \leq  r}
 \frac{V(\zeta)}{\abs{\zeta - x}^{d - 1 + \alpha}} \, d\zeta \\
 &\lesssim (r')^{\alpha} \int^{2r}_{r'} \frac{V(B(x,t))}{t^{d - 1 + \alpha}} \,
 \frac{dt}{t},
\end{split}\end{align*}
where the final estimate can be obtained by breaking the integral up into a finite sum over the dyadic annuli $B(x,2^{j+1}r') \setminus B(x,2^jr')$ for all $j\in\{0,\ldots,\lceil\log_2(r/r')\rceil\}$. The volume comparison estimate \eqref{eqtn:beta} then implies that
\begin{align}\begin{split}\label{betaalpha}
 I_{3} &\lesssim_R (r')^{\alpha} \br{\int^{2 r}_{r'} \br{\frac{t}{r}}^{d - 2 + \beta}
 \frac{1}{t^{d - 1 + \alpha}} \, \frac{dt}{t}} V(B(x,2r)) \\
 &\lesssim \frac{\br{r'}^{\alpha}}{r^{\beta - 1}} \br{\int^{2 r}_{r'} t^{(\beta -1) - \alpha} \, \frac{dt}{t}} \frac{V(B(x,r))}{r^{d - 1}}.
\end{split}\end{align}

To evaluate the remaining integral, consider three cases:\\
If $\beta-1>\alpha$, then
\begin{align*}\begin{split}  
 I_{3} &\lesssim_R \frac{(r')^{\alpha}}{r^{\beta - 1}} r^{(\beta-1)-\alpha}
 \frac{V(B(x,r))}{r^{d - 1}} 
  = \frac{(r')^{\alpha}}{r^{\alpha}}
 \frac{V(B(x,r))}{r^{d - 1}}.
 \end{split}\end{align*}
If $\beta-1<\alpha$, then
\begin{align*}\begin{split}  
 I_{3} &\lesssim_R \frac{(r')^{\alpha}}{r^{\beta - 1}} (r')^{(\beta-1)-\alpha}
 \frac{V(B(x,r))}{r^{d - 1}} 
  = \frac{(r')^{\beta-1}}{r^{\beta-1}}
 \frac{V(B(x,r))}{r^{d - 1}}.
 \end{split}\end{align*}
If $\beta-1=\alpha$, then recall that $\beta := 2 - \frac{d}{d+\delta}$ where $V \in RH_{d+\delta}$ for some $\delta>0$. The self-improvement property in Proposition~\ref{prop:SelfImprovement} thus implies that there exists $\delta'>\delta$ such that $V \in RH_{d+\delta'}$ whilst $\beta':=2 - \frac{d}{d+\delta'}$ satisfies $\beta'-1>\alpha$. Therefore, applying the volume comparison estimate as in \eqref{betaalpha} but with $\beta'$ instead of $\beta$, we obtain 
\begin{align*}\begin{split}  
 I_{3} &\lesssim_R \frac{(r')^{\alpha}}{r^{\alpha}}
 \frac{V(B(x,r))}{r^{d - 1}}.
 \end{split}\end{align*}
 
Altogether, the three cases above prove that
\begin{equation}
  \label{eqtn:Reg5}
  I_{3} \lesssim_R \frac{(r')^{\gamma}}{r^{1 + \gamma}} \frac{V(B)}{r^{d - 2}}.
\end{equation}
Altogether, estimates \eqref{eqtn:Reg4}, \eqref{I2} and \eqref{eqtn:Reg5} for $I_{1}$, $I_{2}$ and $I_{3}$ prove that
$$
\abs{\nabla w(x) - \nabla w(x')}
\lesssim_R \frac{(r')^{\gamma}}{r^{1 + \gamma}}  \|\mathcal{E}(\cdot,y)\|_{L^\infty(B)} \frac{V(B(x,r))}{r^{d
  - 2}},
$$
which combined with \eqref{eqtn:Reg3} proves the required estimate for $ \abs{\nabla_{1} \mathcal{E}(x,y) - \nabla_{1} \mathcal{E} (x',y)}$.

\vspace*{0.1in}

Reflecting on the proof above, we see that essentially the exact same
argument can be used to estimate the remaining term
$\abs{\nabla_{1}\mathcal{E}(y,x) - \nabla_{1}\mathcal{E}(y,x')}$. The primary difference will be to replace the solutions $u$ and $v$ from \eqref{defu} and \eqref{defv} with
$$
\tilde{u}(\xi) := \mathcal{E}(y,\xi)
\quad\textrm{ and}\quad
\tilde{v}(\xi) := \tilde{u}(\xi) + \int_{B} \mathcal{E}_{A}^{0}(\zeta,\xi) \tilde{u}(\zeta) V(\zeta) \, d\zeta
$$
for all $\xi \in B := B(x, 3r/4)$, which instead satisfy $-\mathrm{div}A^*\nabla \tilde{u} + V \tilde{u} = 0$ and $-\mathrm{div}A^*\nabla \tilde{v} = 0$ in $B$. This does not  alter the remainder of the proof, however, since the hypotheses on $A$ are preserved by its transpose $A^*$.
\end{prf}

We now deduce the aforementioned H\"older regularity for the kernel $ \nabla_{1} \mathcal{E}$.

\begin{cor}
  \label{cor:Reg}
Let $R> 0$ and $\gamma = \min \br{\alpha, \beta - 1}$. If $x$, $y\in\R^d$ and $\abs{x - y} \leq
\rho(x)$ whilst also $x, x' \in B(y,R)$ and $\abs{x - x'} \leq \frac{1}{2} \abs{x - y}$, then
\[
\abs{\nabla_1 \mathcal{E}(x,y) - \nabla_1 \mathcal{E}(x',y)}
+\abs{\nabla_1 \mathcal{E}(y,x) - \nabla_1 \mathcal{E}(y,x')} 
\lesssim_{R} \br{\frac{\abs{x - x'}}{\abs{x - y}}}^{\gamma}
\frac{1}{\abs{x - y}^{d - 1}},
\]
where the implicit constant may depend only on $d$, $\lambda$, $\Lambda$, $\tau$, $\llbracket V\rrbracket_{d/2}$ and $R$.
\end{cor}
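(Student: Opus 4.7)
The plan is to derive the corollary directly from Proposition~\ref{prop:Reg} by controlling the two factors
\[
\|\mathcal{E}(\cdot,y)\|_{L^\infty(B(x,\frac{3}{4}|x-y|))} \qquad \text{and} \qquad 1 + \frac{V(B(x,|x-y|))}{|x-y|^{d-2}}
\]
that appear on the right-hand side there, using the hypothesis $|x-y| \leq \rho(x)$ together with the upper bound in Theorem~\ref{thm:MaybSharp} and the reverse H\"older machinery of Section~\ref{subsec:Reverse}.

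For the supremum of $\mathcal{E}(\cdot,y)$, I would observe that for any $\xi \in B(x,\tfrac{3}{4}|x-y|)$ the reverse triangle inequality gives $|\xi - y| \geq \tfrac{1}{4}|x-y|$. Theorem~\ref{thm:MaybSharp} then yields $\mathcal{E}(\xi,y) \lesssim |\xi-y|^{-(d-2)} \lesssim |x-y|^{-(d-2)}$, so $\|\mathcal{E}(\cdot,y)\|_{L^\infty(B(x,\frac{3}{4}|x-y|))} \lesssim |x-y|^{-(d-2)}$, uniformly in $y$.

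For the potential term, since $V \in RH_d$ automatically belongs to $RH_{d/2}$ (and $d/2 > 1$), Lemma~\ref{lem:RHn} applied with $q = d/2$, $r = |x-y|$ and $R = \rho(x)$ is valid in view of $|x-y| \leq \rho(x)$ and gives
\[
V(B(x,|x-y|)) \lesssim \br{\frac{|x-y|}{\rho(x)}}^{d-2} V(B(x,\rho(x))).
\]
Combining this with the identity $V(B(x,\rho(x))) \simeq \rho(x)^{d-2}$ from Remark~\ref{rmk:Critical} gives $V(B(x,|x-y|)) \lesssim |x-y|^{d-2}$, so the bracketed factor is bounded by a constant.

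Substituting both bounds into Proposition~\ref{prop:Reg} collapses the right-hand side to $|x-x'|^{\gamma} |x-y|^{-(\gamma+1)} \cdot |x-y|^{-(d-2)}$, which is exactly the stated conclusion after rewriting as $(|x-x'|/|x-y|)^{\gamma}|x-y|^{-(d-1)}$. I do not anticipate any real obstacle here: Proposition~\ref{prop:Reg} has already done the hard work, and the only subtlety is making sure the local scale $|x-y|$ is bounded above by $\rho(x)$ so that the reverse H\"older comparison runs in the correct direction; both inequalities are applied with the fixed radius $R$ so the $R$-dependence of the implicit constants is inherited cleanly.
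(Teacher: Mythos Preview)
Your proposal is correct and follows essentially the same approach as the paper: both apply Proposition~\ref{prop:Reg} and then use Theorem~\ref{thm:MaybSharp} to bound the $L^\infty$-norm of $\mathcal{E}(\cdot,y)$ by $|x-y|^{-(d-2)}$, and the reverse H\"older volume comparison together with Remark~\ref{rmk:Critical} to bound the potential factor by a constant under the hypothesis $|x-y|\le\rho(x)$. The only cosmetic difference is that the paper invokes the refined estimate \eqref{eqtn:beta} rather than Lemma~\ref{lem:RHn} with $q=d/2$, but either version suffices here.
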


\begin{prf}  
 If $\abs{x - y} \leq \rho(x)$, then we can use \eqref{eqtn:beta} and
 Remark \ref{rmk:Critical} to obtain
 $$
\frac{V(B(x,\abs{x - y}))}{\abs{x - y}^{d - 2}} \lesssim
\frac{V(B(x,\rho(x)))}{\rho(x)^{d - 2}} \lesssim 1.
$$
Therefore, by Proposition \ref{prop:Reg} and Theorem
\ref{thm:MaybSharp}, we obtain
$$
\abs{\nabla_1 \mathcal{E}(x,y) - \nabla_1 \mathcal{E}(x',y)}
\lesssim_R \frac{\abs{x - x'}^{\gamma}}{\abs{x - y}^{\gamma+1}}
\sup_{\xi \in B(x,\frac{3}{4} \abs{x - y})} \frac{1}{\abs{\xi - y}^{d - 2}}
\lesssim \frac{\abs{x - x'}^{\gamma}}{\abs{x - y}^{\gamma+d-1}},
$$
as required.
 \end{prf}

The following proposition will allow us to approximate the potential dependent kernel $\nabla_{1} \mathcal{E}=\nabla_{1} \mathcal{E}_A^V$ with the potential-free kernel $\nabla_{1} \mathcal{E}_A^0$ at local scales determined by the critical radius function. The proof once again relies on ideas from Shen's work in \cite{shen1999fundamental}.

\begin{prop}
 \label{prop:Flatness} 
    Let $\eta, \, M > 0$. If $x,\, y \in B(0,M)$ and $|x-y| \leq \eta \rho (x)$, then
   \begin{align*}
\abs{\nabla_{1} \mathcal{E}(x,y)  - \nabla_{1}
  \mathcal{E}_{A}^{0}(x,y)} 
  &\lesssim_{\eta,M} \rho(x)^{-\beta}  \abs{x - y}^{\beta + 1 - d}
    \leq C_\rho(M) \abs{x - y}^{\beta + 1 - d},
\end{align*}
where the implicit constant may depend only on $d$, $\lambda$, $\Lambda$, $\tau$, $\llbracket V\rrbracket_{d/2}$, $\eta$ and $M$, whilst
$$
C_\rho(M) := B_{0}^\beta\rho(0)^{-\beta} \br{1 + \frac{M}{\rho(0)}}^{M_{0} \beta} \in [0,\infty)
$$
with the constants $B_{0}$ and $M_{0}$ from Lemma \ref{lem:Shen0}.
 \end{prop}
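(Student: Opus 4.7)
The plan is to adapt the local integral-representation technique used in the proofs of Proposition~\ref{prop:Size} and Proposition~\ref{prop:Reg}: introduce an auxiliary $L_A^0$-harmonic function $v$ that absorbs the potential term up to an explicit correction, and then compare $v$ with $\mathcal{E}_A^0(\cdot, y)$. I would set $r := |x - y|$ and $B := B(x, r/2)$, so that $y \notin B$, and following Proposition~\ref{prop:Size} put $u(\xi) := \mathcal{E}(\xi, y)$ together with
\[
v(\xi) := u(\xi) + \int_B \mathcal{E}_A^0(\xi, \zeta) V(\zeta) u(\zeta) \, d\zeta \qquad \text{for } \xi \in B,
\]
which satisfies $L_A^0 v = 0$ on $B$. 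Differentiating at $x$ yields the decomposition
\[
\nabla_1 \mathcal{E}(x, y) - \nabla_1 \mathcal{E}_A^0(x, y) = \bigl[ \nabla v(x) - \nabla_1 \mathcal{E}_A^0(x, y) \bigr] - \int_B \nabla_1 \mathcal{E}_A^0(x, \zeta) V(\zeta) \mathcal{E}(\zeta, y) \, d\zeta,
\]
which reduces the proof to two tasks.

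The local integral is bounded directly. The first estimate in Lemma~\ref{lem:PotentialFreeSize} (valid since $|x - \zeta| < r/2 \leq M$) gives $|\nabla_1 \mathcal{E}_A^0(x, \zeta)| \lesssim_M |x - \zeta|^{-(d - 1)}$; the upper bound in Theorem~\ref{thm:MaybSharp} (discarding the exponential factor) furnishes $|\mathcal{E}(\zeta, y)| \lesssim |\zeta - y|^{2 - d} \lesssim r^{2 - d}$ since $|\zeta - y| \geq r/2$ on $B$; Lemma~\ref{lem:RHn2} (requiring $V \in RH_d$) handles the weighted integral of $V$; and the volume comparison~\eqref{eqtn:beta} together with Remark~\ref{rmk:Critical} gives $V(B(x, r)) \lesssim (r/\rho(x))^{d - 2 + \beta} \rho(x)^{d - 2} = r^{d - 2 + \beta} \rho(x)^{-\beta}$. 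Combining these estimates produces a bound of exactly $r^{\beta + 1 - d} \rho(x)^{-\beta}$ for the local integral.

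The harmonic difference $\nabla v(x) - \nabla_1 \mathcal{E}_A^0(x, y)$ is treated by observing that replacing $B$ by $\R^d$ in the definition of $v$ yields $\tilde v$ satisfying $L_A^0 \tilde v = \delta_y$ with sufficient decay at infinity (provided by the exponential factor in Theorem~\ref{thm:MaybSharp}), whence $\tilde v = \mathcal{E}_A^0(\cdot, y)$ by uniqueness and one obtains the identity
\[
v(\xi) - \mathcal{E}_A^0(\xi, y) = -\int_{\R^d \setminus B} \mathcal{E}_A^0(\xi, z) V(z) \mathcal{E}(z, y) \, dz \qquad \text{for } \xi \in B.
\]
Differentiating at $x$ and decomposing $\R^d \setminus B$ into dyadic annuli $\{ 2^k r \leq |z - x| < 2^{k+1} r \}$, annuli at scales below $\rho(x)$ are handled by repeating the estimates of the preceding paragraph, with the sum dominated geometrically by its first term since $\beta + 1 - d < 0$; annuli at scales exceeding $\rho(x)$ are tamed by the stretched-exponential Agmon decay of $\mathcal{E}(\cdot, y)$ from Theorem~\ref{thm:MaybSharp} combined with the lower bound~\eqref{eqtn:lem:Shen2} in Lemma~\ref{lem:Shen}, after using Corollary~\ref{cor:LocalCritical} to replace $\rho(y)$ by $\rho(x)$ up to a constant.

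The main obstacle is the rigorous justification of the global identity (uniqueness of $L_A^0 w = \delta_y$ among appropriately decaying solutions, and interchange of gradient and integral) together with careful summation of the annular contributions at scales above $\rho(x)$, where the doubling-driven polynomial growth of $V(B(x, t))$ must be absorbed by the stretched-exponential Agmon decay. Once the bound $r^{\beta + 1 - d} \rho(x)^{-\beta}$ is in hand, the secondary estimate with constant $C_\rho(M)$ is immediate from Corollary~\ref{cor:Shen0}, which bounds $\rho(x)^{-\beta}$ by a multiple of $\rho(0)^{-\beta}$ depending only on $M$.
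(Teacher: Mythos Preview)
Your proposal is correct and, once unpacked, coincides with the paper's argument. The identity you reach via the auxiliary $v$ and the uniqueness claim $\tilde v=\mathcal{E}_A^0(\cdot,y)$ is precisely the global representation
\[
\mathcal{E}_A^0(x,y)-\mathcal{E}(x,y)=\int_{\R^d}\mathcal{E}_A^0(x,z)\,\mathcal{E}(z,y)\,V(z)\,dz,
\]
which the paper simply quotes from Mayboroda--Poggi (Proposition~7.10 of \cite{mayboroda2019exponential}) rather than rederiving. After differentiating, your two pieces are nothing but the restrictions of $\int_{\R^d}\nabla_1\mathcal{E}_A^0(x,z)\,\mathcal{E}(z,y)\,V(z)\,dz$ to $B(x,r/2)$ and its complement, so the detour through $v$ is harmless but unnecessary. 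The paper instead splits this same integral at the fixed radius $|x-z|=1$, so that the two halves of Lemma~\ref{lem:PotentialFreeSize} apply cleanly to $I_1$ and $I_2$; it then defers the annular summations (your ``scales below/above $\rho(x)$'' dichotomy) to the computations already carried out by Shen in \cite{shen1999fundamental}. Your organisation with a moving split at $r/2$ and an explicit dyadic decomposition works equally well and is perhaps more self-contained, at the cost of having to track the transition across $|x-z|=1$ yourself when invoking Lemma~\ref{lem:PotentialFreeSize}.
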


 \begin{prf}
   Let $\eta>0$ and fix $x,\, y \in B(0,M)$ with $|x-y| \leq \eta \rho (x)$. We set  $r= \abs{x - y}$ and $R = \rho(x)$. Proposition $7.10$ of \cite{mayboroda2019exponential} allows us to express the difference of the two fundamental solutions as
   $$
\mathcal{E}_{A}^{0}(x,y) - \mathcal{E}(x,y) = \int_{\R^{d}}
\overline{\mathcal{E}_{A^{*}}^{0}(z,x)} \mathcal{E}(z,y) V(z) \, dz = \int_{\R^{d}}
\mathcal{E}_{A}^{0}(x,z) \mathcal{E}(z,y) V(z) \, dz,
$$
where the second identity uses that $A$ is real-valued with $\mathcal{E}_{A^*}^0(z,x)=\mathcal{E}_{A}^{0}(x,z)$. Therefore, we have
\begin{multline*}
 \abs{\nabla_{1} \mathcal{E}(x,y) - \nabla_{1} \mathcal{E}_{A}^{0}(x,y)}
  \leq \int_{B(x,1)} \abs{\nabla_{1} \mathcal{E}_{A}^{0}(x,z)} \mathcal{E}(z,y)
 V(z) \, dz \\ 
 + \int_{\R^{d} \setminus B(x,1)} \abs{\nabla_{1}
   \mathcal{E}_{A}^{0}(x,z)} \mathcal{E}(z,y) V(z) \, dz =: I_1 + I_2.
\end{multline*}

To estimate $I_1$, the first estimate in Lemma~\ref{lem:PotentialFreeSize} and Theorem \ref{thm:MaybSharp} show that
\begin{align*}
  I_1 &\lesssim \int_{B(x,1)} \frac{1}{\abs{x - z}^{d - 1}}
 \frac{e^{- \varepsilon d_{V}(z,y)}}{\abs{z - y}^{d - 2}} V(z) \, dz \\
 &\lesssim \frac{1}{r^{d - 2}} \int_{B(x,r)} \frac{V(z)}{\abs{x
     - z}^{d - 1}} \, dz + \frac{1}{r^{d - 1}} \br{\frac{r}{\rho(y)}}^{\beta},
\end{align*}
where the second estimate follows by the same arguments used to prove (7.15) in Lemma 7.13 of \cite{shen1999fundamental}. Consecutive applications of \eqref{eqtn:RHn2} in Lemma \ref{lem:RHn2}, which requires $V \in RH_{d}$, the volume comparison estimate \eqref{eqtn:beta} and Remark~\ref{rmk:Critical} show that
\begin{align*}\begin{split}  
 \frac{1}{r^{d - 2}} \int_{B(x,r)} \frac{V(z)}{\abs{x - z}^{d - 1}} \,
 dz &\lesssim \frac{1}{r^{d - 2}} \frac{V(B(x,r))}{r^{d - 1}} \\
 &\lesssim  \frac{1}{r^{d - 2}} \br{\frac{r}{R}}^{d - 2 + \beta}
 \frac{V(B(x,R))}{r^{d - 1}} \\
 &\lesssim \frac{1}{r^{d - 1}} \br{\frac{r}{R}}^{\beta}.
\end{split}\end{align*}
Hence, utilising Corollary \ref{cor:LocalCritical}, we obtain
\begin{align*}\begin{split}  
   I_1 \lesssim \frac{1}{r^{d - 1}} \br{\frac{r}{R}}^{\beta} +
  \frac{1}{r^{d - 1}} \br{\frac{r}{\rho(y)}}^{\beta}
 \lesssim_\eta \frac{1}{r^{d - 1}} \br{\frac{r}{R}}^{\beta}.
\end{split}\end{align*}

To estimate $I_2$, the second estimate in Lemma~\ref{lem:PotentialFreeSize} and Theorem~\ref{thm:MaybSharp} show that
$$
I_2 \lesssim \int_{\R^{d}} \frac{1}{\abs{z - x}^{d - 2}} \frac{e^{-\varepsilon d_{V}(z,y)}}{\abs{z - y}^{d - 2}} V(z) \, dz
\lesssim \frac{1}{r^{d - 2}} \br{\frac{r}{\rho(y)}}^{\beta}
 \lesssim_{\eta} \frac{M}{r^{d - 1}} \br{\frac{r}{R}}^{\beta},
$$
where the second estimate follows from the arguments beginning at (4.10) in the proof of Lemma $4.8$ of \cite{shen1999fundamental}, and the third estimate utilises Corollary \ref{cor:LocalCritical} and the fact that $r=|x-y| \leq 2M$. 

Altogether, we have shown that
$$
I_1+I_2 \lesssim_{\eta} \frac{(1+M)}{r^{d - 1}} \br{\frac{r}{R}}^{\beta} \lesssim_{\eta,M} \frac{1}{r^{d - 1}} \br{\frac{r}{R}}^{\beta}.
$$
The proof of this proposition is then completed by combining this
estimate with Corollary \ref{cor:Shen0}.
\end{prf}

For a constant coefficient matrix $A_0$ satisfying the ellipticity in \eqref{eqtn:Ellipticity}, we will use $\Theta(x,y;A_0) := \mathcal{E}_{A_0}^0(x,y)$ to denote the fundamental solution for the constant coefficient operator $-\mathrm{div} A_0 \nabla$. Our proof of Theorem \ref{thm:Main} will rely on the antisymmetry of the derivative kernel whereby $\nabla_{1} \Theta(x,y;A_0) = - \nabla_{1} \Theta (y,x;A_0)$ for all $x, \, y \in \R^{d}$. The following estimate will allow us to exploit this antisymmetry by showing how the kernel $\nabla_{1}
\mathcal{E}_A^V(x,y)$ for a variable coefficient $A$ can be approximated, at local scales determined by the critical radius function, by the kernels for constant coefficient operators obtained by so-called freezing of the coefficients of $A$.

\begin{cor}
  \label{cor:Flatness}
   Let $\eta, \, M > 0$. If $x, \, y \in B(0,M)$ and $\abs{x - y} \leq
   \eta \rho (x)$, then
$$
\abs{\nabla_1 \mathcal{E}(x,y)  - \nabla_1 {\Theta}(x,y;{A(x)})} 
+ \abs{\nabla_1 \mathcal{E}(x,y)  - \nabla_1 {\Theta}(x,y;{A(y)})} 
\lesssim_{\eta,M} \abs{x - y}^{\beta + 1 - d},
$$
where the implicit constant may depend only on $d$, $\lambda$, $\Lambda$,
$\tau$, $\llbracket V \rrbracket_{d/2}$, $\eta$ and $M$.
\end{cor}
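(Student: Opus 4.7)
The plan is to use $\nabla_1 \mathcal{E}_A^0(x,y)$ as an intermediate quantity and decompose each of the two differences in the statement via the triangle inequality. For the $A(x)$ term,
\[
\nabla_1 \mathcal{E}(x,y) - \nabla_1 \Theta(x,y; A(x))
= \bigl[\nabla_1 \mathcal{E}(x,y) - \nabla_1 \mathcal{E}_A^0(x,y)\bigr]
+ \bigl[\nabla_1 \mathcal{E}_A^0(x,y) - \nabla_1 \Theta(x,y; A(x))\bigr],
\]
and analogously for the term involving $A(y)$. The first bracket is exactly the quantity controlled by Proposition \ref{prop:Flatness}: under the standing hypotheses $|x-y|\leq \eta\rho(x)$ and $x,y\in B(0,M)$, that proposition delivers the desired bound $\lesssim_{\eta,M} |x-y|^{\beta+1-d}$ immediately.

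The second bracket is a $V$-free freezing-of-coefficients comparison, which is available by standard elliptic theory. A natural route is to set $w(z) := \mathcal{E}_A^0(z,y) - \Theta(z,y;A(x))$ on the ball $B(x, |x-y|/2)$ and note that
\[
-\mathrm{div}\, A(z)\nabla w(z) = \mathrm{div}_z \bigl((A(x) - A(z))\nabla_1 \Theta(z,y;A(x))\bigr)
\]
on this ball. The Hölder bound $|A(z)-A(x)|\lesssim |z-x|^{\alpha}$, combined with the interior gradient estimate of Theorem \ref{thm:Interior} and the constant-coefficient size bound for $\nabla_1 \Theta(\cdot,y;A(x))$ coming from Lemma \ref{lem:PotentialFreeSize} applied to the frozen matrix $A(x)$, yields a bound of shape $|x-y|^{\alpha+1-d}$. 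Since $|x-y|\leq 2M$, this power of $|x-y|$ can be absorbed into an $M$-dependent constant times the target $|x-y|^{\beta+1-d}$. This freezing estimate is essentially the one established in the $V\equiv 0$ setting by Conde-Alonso, Mourgoglou and Tolsa in \cite{conde2019failure}, so I would simply cite that result rather than redo the computation.

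The sum involving $A(y)$ is handled by repeating the argument after swapping the roles of $x$ and $y$: this is permitted because the transpose $A^\ast$ satisfies the same ellipticity and Hölder hypotheses as $A$, so the functions $z\mapsto \mathcal{E}(y,z)$ and $z\mapsto \Theta(y,z;A(y))$ solve the analogous equations for $L_{A^\ast}^V$ and $-\mathrm{div}\, A(y)^\ast\nabla$, respectively. This is the same symmetry trick already used at the end of the proof of Proposition \ref{prop:Reg}, and no new ingredient is required.

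The conceptual work has already been done in Proposition \ref{prop:Flatness}, so the only non-trivial remaining ingredient is the $V$-free freezing estimate for the second bracket; once that is in hand, the main obstacle is simply bookkeeping constants to ensure the implicit dependence is on $d$, $\lambda$, $\Lambda$, $\tau$, $\llbracket V\rrbracket_{d/2}$, $\eta$ and $M$ only.
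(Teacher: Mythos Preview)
Your approach is exactly the paper's: triangle inequality with $\nabla_1\mathcal{E}_A^0(x,y)$ as pivot, then Proposition~\ref{prop:Flatness} for the potential perturbation and the freezing estimates from \cite[Lemma~2.2(b),(c)]{conde2019failure} for the potential-free piece. Your side remark about the freezing bound having shape $|x-y|^{\alpha+1-d}$ and then being ``absorbed'' into $|x-y|^{\beta+1-d}$ via $|x-y|\leq 2M$ is not quite right, since $\beta>1\geq\alpha$ forces $|x-y|^{\alpha+1-d}\gg |x-y|^{\beta+1-d}$ as $|x-y|\to 0$; fortunately this does not affect your argument, as you correctly fall back on citing the result from \cite{conde2019failure}, which handles both the $A(x)$ and $A(y)$ freezings directly and makes your transpose discussion unnecessary.
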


\begin{prf}  
 This follows immediately from Proposition~\ref{prop:Flatness} in combination with estimates (b) and (c) from Lemma 2.2 in \cite{conde2019failure}.
 \end{prf}

 \section{Proof of Theorem \ref{thm:Main}}
 \label{sec:Proof}

The proof will use a variational argument and maximum principle that
are now very standard in this area. The ones who introduced such an argument in this context were Eiderman, Nazarov and Volberg \cite{eiderman2014s}. Our proof will be closer to the potential-free one from \cite{conde2019failure}. We will pay special attention to those parts of the argument that differ substantially from the potential-free case and refer the reader to \cite{conde2019failure} if the
proof of a statement in our potential dependent setting is the same as
in \cite{conde2019failure}, rather than repeating the argument
verbatim.

Throughout this section we continue to consider a coefficient matrix
$A$ on $\R^d$ for some $d\geq3$ satisfying the assumptions in Section
\ref{sect:Prelim} with constants $\lambda, \, \Lambda,\, \alpha, \,
\tau > 0$. We also assume that $V$ is a fixed non-negative locally
integrable function on $\R^d$ that belongs to the reverse H\"older
class $RH_d$. 
We continue to use the shorthand notation $L:=L_{A}^{V}$,
$\mathcal{E}:=\mathcal{E}_{A}^{V}$ and introduce
$T_\mu:=T_{A,\mu}^{V}$. Moreover, given a Borel measure $\sigma$ on
$\R^{d}$ define
$$
T\sigma(x) := T^V_{A,\sigma}(1)(x) = \int \nabla_{1} \mathcal{E}(x,y) \, d\sigma(y)
$$
and set $\norm{\sigma} := \sigma(\R^{d})$.

\subsection{Reduction to a Localized Estimate}\label{ss:red} It can be assumed, without loss
of generality, that there exists some $\tau_{0} > 0$ such that
$$
\Theta^{d - 1,*}(x,\mu) > \tau_{0}
$$
for $\mu$-a.e. $x \in \R^{d}$. To see this, note that $\mu$ can be restricted to a suitable subset with positive $\mu$ measure for which such a
$\tau_{0}$ does exist. The unboundedness of $T_\mu$ will then follow
from the unboundedness of the operator acting on the restricted subset.

Similarly, it can also be assumed that $\mu$ has $(d - 1)$-polynomial growth with
constant $c_{0} > 0$. This follows from the fact that $\Theta^{d-1,*}(x,\mu)$
is $\mu$-a.e. finite. Indeed, since $\Theta^{d-1,*}(x,\mu)$ is
$\mu$-a.e. finite, by restricting to a suitable subset with positive
$\mu$ measure if necessary, it can be assumed that there exists some $N
> 0$ for which $\Theta^{d-1,*}(x,\mu) \leq N$ for $\mu$-a.e. $x \in
\R^{d}$. This implies that there must exist some $\varepsilon > 0$ for which
$$
\frac{\mu(B(x,r))}{(2 r)^{d-1}} \leq  2 N
$$
for all $r \leq \varepsilon$. For $r > \varepsilon$, the estimate
$\mu(B(x,r)) \lesssim r^{d-1}$ follows from the fact that $\mu$ is
compactly supported.

\begin{definition} 
  \label{def:Densities}
  For a ball $B \subset \R^{d}$, define the $(d - 1)$-dimensional
  density of $B$ through
  $$
\Theta_{\mu}(B) := \frac{\mu(B)}{\mathrm{diam}(B)^{d - 1}}.
$$
Similarly, for a cube $Q \in \mathcal{D}$, the $(d - 1)$-dimensional
density of $Q$ is defined through
$$
\Theta_{\mu}(Q) := \frac{\mu(Q)}{l(Q)^{d - 1}}.
$$
 \end{definition}

The following lemma is purely a property of the measure and does not
depend on the differential operator under consideration. It will therefore remain true in our context.

\begin{lem}[{\cite[Lemma~4.1]{conde2019failure}}]
  \label{lem:CubeSequence}
  For $\mu$-a.e. $x \in \R^{d}$ there exists a sequence of
  high-density cubes in $\mathcal{D}^{db}$ containing $x$ whose length vanishes. More precisely,
  there exists a sequence $\lb Q_{k} \rb_{k \in \N} \in \mathcal{D}^{db}$ such that
  $l(Q_{k}) \rightarrow 0$ as $k \rightarrow \infty$, $x \in Q_{k}$
  and $\Theta_{\mu}(Q_{k}) > c  \tau_{0}$ for all $k \in \N$, where $c
  > 0$ is some constant that only depends on the dimension and the
  parameters of the David--Mattila lattice.

    For $\mu$-a.e. $x \in \R^{d}$ there exists a sequence of
  low-density cubes in $\mathcal{D}^{db}$ containing $x$ whose length vanishes. More precisely, for
  any $A > 1$ and $\delta \in (0,1)$,
  there exists a sequence $\lb Q_{k} \rb_{k \in \N} \in \mathcal{D}^{db}$ such that
  $l(Q_{k}) \rightarrow 0$ as $k \rightarrow \infty$, $x \in Q_{k}$
  and $\Theta_{\mu}(A B_{Q_{k}}) < \delta$ for all $k \in \N$.
\end{lem}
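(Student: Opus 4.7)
Since the claim is a purely measure-geometric property of $\mu$ and the David--Mattila lattice --- it does not involve the differential operator $L_{A}^{V}$ --- the proof is identical to that of \cite[Lemma~4.1]{conde2019failure}, and I will only outline the plan.

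\textbf{High-density sequence.} The plan is to start from a point $x$ in the full-measure set where $\Theta^{d-1,*}(x,\mu)>\tau_{0}$, as guaranteed by the reduction in Section~\ref{ss:red}. Extract radii $r_{k}\to 0$ with $\mu(B(x,r_{k}))>\tau_{0}\,r_{k}^{d-1}$, and at the generation with $A_{0}^{-m_{k}}\simeq r_{k}$ select the unique cube $P_{k}\in\mathcal{D}_{m_{k}}$ containing $x$. Using the inclusions $B(P_{k})\subset P_{k}\subset 28B(P_{k})$ from Theorem~\ref{thm:DavidMattila}, the balls $B(x,r_{k})$ and $B(P_{k})$ are comparable up to a constant dilation depending only on the lattice parameters. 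If $P_{k}\in\mathcal{D}^{db}$, the doubling bound on $B(P_{k})$ directly yields $\mu(P_{k})\gtrsim \mu(B(x,r_{k}))>\tau_{0}r_{k}^{d-1}$, so $\Theta_{\mu}(P_{k})>c\tau_{0}$, and we set $Q_{k}:=P_{k}$. Otherwise we pass to a doubling ancestor without losing more than a bounded factor in $l$, then extract a subsequence with $l(Q_{k})\to 0$.

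\textbf{Low-density sequence.} Fix $A>1$, $\delta\in(0,1)$, and take $x$ with $\Theta^{d-1}_{*}(x,\mu)=0$. Extract radii $r_{k}\to 0$ with $\mu(B(x,r_{k}))<\delta' r_{k}^{d-1}$, where $\delta'>0$ is chosen small in terms of $A$, $\delta$ and $d$. Select the cube $Q_{k}\in\mathcal{D}$ containing $x$ at the generation for which $l(Q_{k})\simeq r_{k}/A$; a direct check using $x\in Q_{k}\subset B_{Q_{k}}$ shows $AB_{Q_{k}}\subset B(x,r_{k})$. Then $\mu(AB_{Q_{k}})\leq \mu(B(x,r_{k}))<\delta' r_{k}^{d-1}$, and dividing by $\mathrm{diam}(AB_{Q_{k}})^{d-1}\simeq r_{k}^{d-1}$ yields $\Theta_{\mu}(AB_{Q_{k}})<\delta$ by the choice of $\delta'$. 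The doubling condition on $Q_{k}$ is again enforced by passing to a doubling ancestor if needed.

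\textbf{Main obstacle.} The delicate step in both parts is securing $Q_{k}\in\mathcal{D}^{db}$. This requires the structural properties of the David--Mattila lattice --- specifically the small-boundary estimate and the non-doubling dilate inequality, which are the last two bullets of Theorem~\ref{thm:DavidMattila}. Together they ensure that $\mu$-almost every $x$ lies in arbitrarily small doubling cubes and that the passage from a non-doubling cube to a doubling ancestor only loses a bounded factor in $l$ without degrading the density estimates used above.
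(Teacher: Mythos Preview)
Your proposal is correct and matches the paper's treatment: the paper does not give its own proof but simply notes that the lemma is purely a property of the measure and the David--Mattila lattice, independent of the operator, and cites \cite[Lemma~4.1]{conde2019failure}. You make the same observation and additionally supply a reasonable outline of the argument from the cited reference, which is more than the paper itself provides.
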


With the existence of high and low-density sequences of cubes ascertained,
the following high and low density sub-collections can be
introduced. Fix $A > 1$ and $\delta \in (0,1)$, the values of which
will be determined at a later time. Let $Q \in \mathcal{D}$,
set $\tau := c \tau_{0}$ with $c$ as given in the previous lemma and define
$$
HD(Q) := \lb R \subsetneq Q :  R \in \mathcal{D}^{db}, \,
\Theta_{\mu}(R) > \tau, \, R \ maximal \rb.
$$
Also let
$$
LD(Q) := \lb R \subsetneq Q : R \in \mathcal{D}^{db}, \,
\Theta_{\mu}(A B_{R}) \leq \delta, \, R \ maximal \rb.
$$
It is clear that both $LD(Q)$ and $HD(Q)$ partition the cube $Q$.
Set
$$
\Sigma_{0} := \lb Q_{k_{0}} \rb,
$$
where if you recall $Q_{k_{0}} := \supp  \mu$.
Then, for $Q \in \mathcal{D}$ introduce
$$
\Sigma_{1}(Q) := \sum_{R \in HD(Q)} LD(R)
$$
and inductively define
$$
\Sigma_{k + 1} := \bigcup_{Q \in \Sigma_{k}} \Sigma_{1}(Q)
$$
for each $k \geq 0$. Since $LD(Q)$ and $HD(Q)$ both partition $Q$ for
any $Q \in \mathcal{D}$, it follows that $\Sigma_{k}$ partitions
$\supp  \mu$ for any $k \in \N$. $\Sigma = \lb \Sigma_{k}
\rb_{k \in \N}$ is thus a filtration of low-density cubes. Define the
martingale difference
$$
\Delta_{Q}f = \sum_{S \in \Sigma_{1}(Q)} \langle f \rangle_{S}
\chi_{S} - \langle f \rangle_{Q} \chi_{Q}
$$
for $Q \in \Sigma$. The function $f$ may then be decomposed in the $L^{2}(\mu)$-sense as
$$
f = \langle f \rangle_{Q_{k_{0}}} + \sum_{Q \in \Sigma} \Delta_{Q}f.
$$
The orthogonality of the martingale differences then leads to
$$
\norm{T \mu}^{2}_{L^{2}(\mu)} = (\langle T \mu
\rangle_{Q_{k_{0}}})^{2} \mu(Q_{k_{0}}) + \sum_{Q \in \Sigma} \norm{\Delta_{Q}(T\mu)}^{2}_{L^{2}(\mu)}.
$$
This decomposition allows us to reduce the proof of Theorem \ref{thm:Main} to the proof of the following proposition.

\begin{prop} 
 \label{prop:InitialReduction} 
 Suppose that $T_{\mu}$ is bounded on $L^{2}(\mu)$. There must then exist
 some $N_{0} > 0$ such that if $Q \in \Sigma_{N}$ for $N > N_{0}$ and
 $\delta$  is chosen small enough then
 \begin{equation}
   \label{eqtn:InitialReduction}
\norm{\Delta_{Q}\br{T \mu}}^{2}_{L^{2}(\mu)} \gtrsim_{\tau} \mu(Q).
\end{equation}
\end{prop}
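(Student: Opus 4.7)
My plan is to argue by contradiction: assume $\|\Delta_Q(T\mu)\|_{L^2(\mu)}^2 \leq \varepsilon \mu(Q)$ for some small $\varepsilon > 0$ to be chosen, and deduce a contradiction. By orthogonality, this forces the vector-valued averages $\langle T\mu\rangle_S$ over the low-density subcubes $S \in \Sigma_1(Q)$ to agree with $\vec v_0 := \langle T\mu\rangle_Q$ up to an $\ell^2(\mu)$-defect of size $\varepsilon \mu(Q)$. Combining this with the assumed $L^2(\mu)$-boundedness of $T_\mu$ and the small-boundary property \eqref{eqtn:SmallBoundary}, one can convert averages into pointwise statements: outside a $\mu$-small exceptional set inside $Q$, we have $T\mu(x) \approx \vec v_0$.

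I would then localise at a high-density cube $R \in HD(Q)$ by splitting $T\mu = T(\mu|_{\kappa B_R}) + T(\mu|_{\R^d \setminus \kappa B_R})$ for a large constant $\kappa$. The H\"older regularity estimate in Corollary \ref{cor:Reg}, combined with the $(d-1)$-polynomial growth of $\mu$, shows that $T(\mu|_{\R^d \setminus \kappa B_R})$ has negligible oscillation across $R$, so on most of $R$ the near piece $T(\mu|_{\kappa B_R})(x)$ equals a fixed vector $\vec v_R$ up to arbitrarily small error. Since $\supp\mu$ is compact, Corollary \ref{cor:Shen0} bounds $\rho$ from below on it, so for $N > N_0$ with $N_0$ large enough we have $l(Q) \ll \rho(x)$ for all $x \in Q$. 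The flatness estimate Corollary \ref{cor:Flatness} then lets me replace $\nabla_1 \mathcal E(x,y)$ by the constant-coefficient kernel $\nabla_1 \Theta(x,y;A(y_R))$ for a base point $y_R \in R$, with an error $O(|x-y|^{\beta+1-d})$ that is negligible against $\mu$ because $\beta > 1$ and $\mu$ has $(d-1)$-polynomial growth.

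Setting $U(x) := \int \Theta(x,y;A(y_R))\,d(\mu|_{\kappa B_R})(y)$, the previous steps yield that $\nabla U \approx \vec v_R$ on most of $R$, so the constant-coefficient Newtonian potential $U(x) - \vec v_R \cdot x$ is approximately constant on a set of high $\mu$-density inside $R$. At this point I can transport the variational argument of \cite{conde2019failure} for the potential-free case: $U$ is $-\mathrm{div}\,A(y_R)\nabla$-harmonic off $\supp(\mu|_{\kappa B_R})$, and the weak maximum principle \eqref{thm:MaxPrinc} together with the antisymmetry $\nabla_1 \Theta(x,y;A(y_R)) = -\nabla_1 \Theta(y,x;A(y_R))$ allow one to propagate the near-constancy of $U$ inward and extract an upper bound on the mass $\mu(\kappa B_R)$. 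This contradicts the lower bound $\mu(R) > \tau\, l(R)^{d-1}$ coming from $R \in HD(Q)$, once $\varepsilon$ and $\delta$ are sufficiently small and $N_0$, $A$ are sufficiently large.

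The hard part will be coordinating the four error terms, namely the far-field tail, the pointwise-to-average conversion for $T(\mu|_{\kappa B_R})$, the flatness approximation of the kernel, and the affine correction, so that all of them can be driven below a common threshold independent of $Q$. This is exactly where the new kernel bounds from Section \ref{sec:KernelEstimates} are essential: Theorem \ref{thm:Size} controls tails via exponential decay in the Agmon distance, Corollary \ref{cor:Reg} gives H\"older regularity at local scales below the critical radius, and Corollary \ref{cor:Flatness} quantifies the perturbative distance to the constant-coefficient potential-free kernel. With these in hand, the variational argument of \cite{conde2019failure} can be ported to the Schr\"odinger setting while preserving the role of the David--Mattila small-boundary property.
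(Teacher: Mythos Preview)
Your proposal has genuine gaps that make the argument unlikely to close as described.

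First, the passage from $\|\Delta_Q(T\mu)\|^2_{L^2(\mu)} \leq \varepsilon\mu(Q)$ to pointwise control $T\mu(x)\approx\vec v_0$ is not justified. The smallness of $\Delta_Q(T\mu)$ only constrains the \emph{averages} $\langle T\mu\rangle_S$ for $S\in\Sigma_1(Q)$; it says nothing about finer-scale oscillation of $T\mu$ inside each $S$, and neither the $L^2(\mu)$-boundedness of $T_\mu$ nor the small-boundary condition converts an $\ell^2$-average defect into a pointwise one. The paper avoids this issue entirely by first passing (via Lemma~\ref{lem:5.1}) to a smooth, Lebesgue-absolutely-continuous localized measure $\sigma$ and proving the lower bound $\|T\sigma\|^2_{L^2(\sigma)}\gtrsim_\tau\sigma(\R^d)$ instead; you never introduce $\sigma$, and working directly with the possibly singular $\mu$ creates both definitional problems for $T\mu(x)$ and obstacles to applying the maximum principle, which in Theorem~\ref{thm:TMaxPrinc} requires $\omega$ to have bounded Lebesgue density.

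Second, what you call ``the variational argument of \cite{conde2019failure}'' does not operate the way you describe. In the paper (and in \cite{eiderman2014s,conde2019failure}) one minimizes the functional $F(g)=\lambda\|g\|_{L^\infty(\sigma)}\|\sigma\|+\int|T(g\sigma)|^2 g\,d\sigma$ to produce $\nu=b\sigma$, obtains the pointwise inequality of Lemma~\ref{lem:Pointwise}, extends it globally via the maximum principle, and then uses the reproducing formula $\varphi_R=T^*(g_{R,\nabla}\,d\mathcal L^d)+S^*(g_{R,V}\,d\mathcal L^d)$ to bound $\sum_{R\in HD_1(Q)}\nu(1.5B_R)\simeq\mu(Q)$ from above by $(\lambda+l(Q)^\gamma)^{1/4}\mu(Q)$ plus a potential term controlled by Lemma~\ref{lem:TV}. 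There is no step that ``propagates near-constancy of $U$ inward'' to get an upper bound on $\mu(\kappa B_R)$; indeed $\mu(\kappa B_R)\lesssim l(R)^{d-1}$ already holds by polynomial growth and is consistent with $R\in HD(Q)$, so no contradiction with high density can arise that way. Your flatness-based reduction to constant coefficients is the right idea---the paper's Section~\ref{sec:ShortProof} does exactly this---but even there one still works with $\sigma$ and invokes the full potential-free variational result of \cite{conde2019failure} as a black box for $T^0_A\sigma$, rather than rerunning a maximum-principle argument on a single potential $U$.
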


From this point on, we will assume that $T_{\mu}$ is bounded and we aim
to prove the lower estimate \eqref{eqtn:InitialReduction}.

The following result states that it is possible to increase the
generation of the cubes in the David--Mattila lattice sufficiently high
so that all of the cubes are smaller than the critical radius of the
potential. This will allow us to utilize the local kernel estimates
from Section \ref{sec:KernelEstimates} at the scale of that generation. Recall that for a cube $Q \in
\mathcal{D}_{k}$ we have $Q \subset B_{Q} := B(x_{Q}, 28 r(Q))$.

\begin{prop} 
 \label{prop:AdmissibleDM} 
There exists $K_{0} >0$ large enough so that the diameter of $B_{Q}$ is
smaller than $\mathrm{min}(1/2,\inf_{\supp  \mu} \rho)$
for all cubes $Q \in \mathcal{D}_{k}$ with $k \geq K_{0}$.
\end{prop}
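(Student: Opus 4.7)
The plan is to reduce the problem to establishing a strictly positive lower bound for $\inf_{\supp \mu}\rho$, after which a straightforward comparison with the David--Mattila scale parameter $A^{-k}$ will suffice. Indeed, for any $Q \in \mathcal{D}_{k}$ one has $\mathrm{diam}(B_Q) = 56 r(Q) \leq 56 C_0 A^{-k}$, so once a positive $m \leq \min(1/2,\inf_{\supp \mu}\rho)$ is secured, any $K_0$ with $56 C_0 A^{-K_0} < m$ will do.

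To obtain this positive lower bound, I would first invoke the reduction in Section~\ref{ss:red} to assume $\mu$ is compactly supported, fixing $M > 0$ with $\supp \mu \subset B(0,M)$. The key technical point is then to verify that $\rho(0) > 0$. Since $V \in RH_{d}$ implies $V \in L^{d}_{\loc}(\R^d)$, H\"older's inequality yields $\int_{B(0,r)} V \lesssim r^{d-1}$ for $r \leq 1$, hence
$$
\frac{1}{r^{d-2}} \int_{B(0,r)} V \lesssim r \longrightarrow 0 \quad \text{as } r \to 0^{+}.
$$
In particular, sufficiently small $r > 0$ satisfy the defining inequality for $\rho(0)$ in \eqref{eqtn:CritRad}, giving $\rho(0) > 0$. (Should $V$ vanish on a neighborhood of $0$ so that $\rho(0) = \infty$, the conclusion simplifies since Corollary~\ref{cor:Shen0} then trivially gives $\rho \equiv \infty$ on $\supp \mu$.)

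With $\rho(0) > 0$ established, applying Corollary~\ref{cor:Shen0} with $E := \supp \mu \subset B(0,M)$ immediately produces
$$
\rho(y) \geq B_{0}^{-1} \rho(0)\left(1 + \frac{M}{\rho(0)}\right)^{-M_{0}} =: c_{0} > 0 \quad \text{for all } y \in \supp \mu,
$$
so $\inf_{\supp \mu} \rho \geq c_0 > 0$. Setting $m := \min(1/2, c_{0})$ and choosing any $K_0 \in \N$ with $K_0 > \log_{A}(56 C_0 / m)$ then ensures $\mathrm{diam}(B_Q) \leq 56 C_0 A^{-k} < m \leq \min(1/2, \inf_{\supp \mu}\rho)$ for every $Q \in \mathcal{D}_k$ with $k \geq K_0$.

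I do not anticipate any serious obstacle: the argument is essentially bookkeeping. The only step requiring genuine care is the positivity of $\rho(0)$, which rests on the interplay between $d \geq 3$ and the $L^{d}_{\loc}$ regularity inherited from $V \in RH_{d}$.
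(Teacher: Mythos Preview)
Your proof is correct and follows essentially the same approach as the paper: both arguments use the David--Mattila bound $r(Q)\le C_{0}A_{0}^{-k}$ together with Corollary~\ref{cor:Shen0} (applied to $\supp\mu\subset B(0,M)$) to obtain a uniform positive lower bound on $\rho$ over $\supp\mu$, and then choose $K_{0}$ so that $56C_{0}A_{0}^{-K_{0}}$ falls below $\min(1/2,\inf_{\supp\mu}\rho)$. Your explicit verification that $\rho(0)>0$ via $V\in L^{d}_{\loc}$ is a detail the paper leaves implicit, but otherwise the arguments coincide.
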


\begin{prf}  
 Observe that the radius $r(Q)$ decreases by a fixed amount in
each generation. In particular, from the definition of the
David--Mattila lattice we know that
$$
A_{0}^{-k} \leq r(Q) \leq C_{0} A_{0}^{-k}
$$
for $Q \in \mathcal{D}_{k}$ and $k \in \N$, where $A_{0}$ and $C_{0}$
are the parameters of the lattice.
Let $D$ be large enough so that $\supp  \mu \subset B(0,D)$.
Fix $K_{0}$ large enough so that
$$
C_{0} A_{0}^{-K_{0}} \leq \frac{1}{56} \mathrm{min} \br{\frac{1}{2},  B_{0}^{-1} \rho(0) \br{1 +  D / \rho(0)}^{-M_{0}}},
$$
where $B_{0}$ and $M_{0}$ are the constants from Lemma
\ref{lem:Shen0}. Corollary \ref{cor:Shen0} then implies that
$$
56 r(Q) \leq \min \br{\frac{1}{2}, \rho(y)}
$$
for all $y$ contained in the support of $\mu$ and $Q \in
\mathcal{D}_{K_{0}}$. This proves that the diameter of $B_{Q}$ is
smaller than $\mathrm{min} \br{1/2,\inf_{\supp  \mu} \rho}$
for any $Q \in \mathcal{D}_{k}$ with $k \geq K_{0}$.
\end{prf}

Assume that $Q \in \mathcal{D}_{k}$ for $k \geq K_{0}$. Since $Q \subset
B_{Q}$, it will then be true that
$$
\mathrm{diam}(Q) \leq \mathrm{min} \br{\frac{1}{2},
  \inf_{\supp \mu} \rho}.
$$
For some $\varepsilon_{0} > 0$ to be chosen later on, define
$\Sigma_{1}'(Q)$ to be a finite subcollection of $\Sigma_{1}(Q)$ that
satisfies
$$
\mu \br{\bigcup_{S \in \Sigma'_{1}(Q)} S} > \br{1 - \varepsilon_{0}} \mu(Q).
$$
Given some small $\kappa_{0} \in (0,1)$, to be fixed at a later time,
and $S \in \Sigma_{1}'(Q)$, define the auxilliary region
$$
I_{\kappa_{0}}(S) := \lb x \in S : \mathrm{dist}(x, \supp  \mu
\setminus S) \geq \kappa_{0} l(S) \rb.
$$
Define the localized measure $\sigma$ through
$$
\sigma = \sigma_{Q} = \sum_{S \in \Sigma_{1}'(Q)}
\frac{\mu(I_{\kappa_{0}}(S))}{\mathcal{L}^{d}
  \br{\frac{1}{4}B(S)}} \mathcal{L}^{d} \vert_{\frac{1}{4}B(S)},
$$
where $\mathcal{L}^{d}$ denotes the Lebesgue measure on $\R^{d}$.
The small boundary condition \eqref{eqtn:SmallBoundary} taken together with the doubling
property implies
$$
\mu(S \setminus I_{\kappa_{0}}(S)) \lesssim \kappa_{0}^{\frac{1}{2}}
\mu(3.5 B_{S}) \lesssim \kappa_{0}^{\frac{1}{2}} \mu(S).
$$
Therefore,
\begin{align*}\begin{split}  
 \mu(S) &= \mu(I_{\kappa_{0}}(S)) + \mu(S \setminus I_{\kappa_{0}}(S))
 \\
 &\leq \mu(I_{\kappa_{0}}(S)) + c \kappa_{0}^{\frac{1}{2}} \mu(S),
\end{split}\end{align*}
for some $c > 0$.
For $\kappa_{0}$ chosen small enough this will give
$$
\mu(S) \leq 2 \mu(I_{\kappa_{0}}(S)).
$$
Therefore,
\begin{align*}\begin{split}  
\sigma(\R^{d}) &= \sum_{S \in \Sigma_{1}'(Q)}
\mu(I_{\kappa_{0}}(S)) 
\geq \frac{1}{2} \sum_{S \in \Sigma'_{1}(Q)} \mu(S) \\
&= \frac{1}{2} \mu \br{\cup_{S \in \Sigma'_{1}(Q)} S} 
> \frac{1}{2} (1 - \varepsilon_{0}) \mu(Q).
\end{split}\end{align*}
For $\varepsilon_{0}$ selected to be smaller than $1/3$, we then have
\begin{equation}
  \label{eqtn:SigmaMu}
\mu(Q) \leq 3 \sigma(\R^{d}).
\end{equation}
This together with the trivial estimate $\sigma(\R^{d}) \lesssim
\mu(Q)$ then implies $\sigma \br{\R^{d}} \simeq  \mu(Q)$.

\begin{lem}
  \label{lem:5.1}
  For any $\varepsilon > 0$, there exists $N_{0} > 0$ such that if $N
  > N_{0}$, $\kappa_{0}$ and $\varepsilon_{0}$ are small enough, $A$
  is big enough and $\delta$ is small enough, then there must exist $c > 0$
  such that
  $$
\norm{\Delta_{Q} T \mu}^{2}_{L^{2}(\mu)} \geq c \norm{T
  \sigma}^{2}_{L^{2}\br{\sigma}} - \varepsilon \mu(Q)
$$
for any $Q \in \Sigma_{N}$.
\end{lem}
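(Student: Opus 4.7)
The plan is to view the martingale difference $\Delta_Q T\mu$ as encoding the local oscillation of $T\mu$ at the scale of $Q$, and to transfer the $L^2(\sigma)$ norm of $T\sigma$ onto this oscillation by comparing $\sigma$ (which is a weighted Lebesgue measure concentrated on the balls $\tfrac14 B(S)$) with $\mu$ at the scales of the cubes $S \in \Sigma_1(Q)$. A key preliminary observation is that Proposition~\ref{prop:AdmissibleDM} allows me to fix $N_0$ large enough so that every $Q \in \Sigma_N$ for $N > N_0$ satisfies $\mathrm{diam}(B_Q) \leq \inf_{\supp\mu}\rho$, which places all relevant scales in the regime where the local kernel estimates of Section~\ref{sec:KernelEstimates} apply.

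Starting from the Pythagorean identity
\[
\|\Delta_Q T\mu\|_{L^2(\mu)}^2 \,=\, \sum_{S \in \Sigma_1(Q)} |\langle T\mu\rangle_S - \langle T\mu\rangle_Q|^2\,\mu(S),
\]
I would restrict the sum to $\Sigma_1'(Q)$ at a cost of $\varepsilon_0\mu(Q)$, and then split $T\mu = T(\mu\chi_{AB_S}) + T(\mu\chi_{(AB_S)^c})$ inside each $S \in \Sigma_1'(Q)$. The far part $T(\mu\chi_{(AB_S)^c})$ oscillates only mildly across $S \cup \tfrac14 B(S)$ as a consequence of the H\"older regularity of $\nabla_1\mathcal{E}$ in Corollary~\ref{cor:Reg}, combined with the $(d-1)$-polynomial growth of $\mu$, contributing an error $O(A^{-2\gamma}\mu(Q))$ absorbed by taking $A$ large. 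The near part $T(\mu\chi_{AB_S})$ is controlled on $S$ by combining the assumed $L^2(\mu)$-boundedness of $T_\mu$ with the low-density condition $\Theta_\mu(AB_S) \leq \delta$, contributing $O(\delta\mu(Q))$ after summing in $S$, absorbed by taking $\delta$ small. Running the analogous near-far decomposition for $T\sigma$ and using Theorem~\ref{thm:Size} together with Corollary~\ref{cor:Flatness}, one then compares $\langle T\mu\rangle_S$ with the average of $T\sigma$ over $\tfrac14 B(S)$ with respect to $\sigma$; the antisymmetry of the freezing-of-coefficients kernel $\nabla_1\Theta(x,y;A_0)$ enters exactly as described before the statement of Corollary~\ref{cor:Flatness}, yielding the cancellations needed to identify the sum of squared averages with $\|T\sigma\|_{L^2(\sigma)}^2$ up to the acceptable errors already catalogued.

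The main obstacle is the bookkeeping required to make every error term fit below $\varepsilon\mu(Q)$ simultaneously: the implicit constants in Section~\ref{sec:KernelEstimates} depend on $d$, $\lambda$, $\Lambda$, $\tau$, $\llbracket V\rrbracket_{d/2}$, and scales related to $\rho(0)$, and these must not compound with the parameter choices. I would fix $\varepsilon_0$ first (for the $\Sigma_1'(Q)$ restriction), then $\kappa_0$ (to ensure $\mu(I_{\kappa_0}(S)) \simeq \mu(S)$), then $A$ (for the far-field smoothness), then $\delta$ (for the near-field $L^2$ bound), and finally $N_0$ large enough so that Proposition~\ref{prop:AdmissibleDM} applies to every $Q \in \Sigma_N$. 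The technical heart of the argument is the antisymmetric cancellation in the comparison step, which in the potential-free setting of \cite{conde2019failure} proceeds by coefficient freezing, and which here is enabled by the new flatness estimate of Corollary~\ref{cor:Flatness}, at the cost that all estimates are restricted to scales below the critical radius $\rho$.
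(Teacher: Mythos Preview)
Your proposal is correct and follows essentially the same approach as the paper: the paper's own proof simply observes that the argument of \cite[Lemma~5.1]{conde2019failure} carries over verbatim once the size, H\"older, and flatness estimates for $\nabla_1\mathcal{E}$ (Theorem~\ref{thm:Size}, Corollary~\ref{cor:Reg}, Corollary~\ref{cor:Flatness}) are available and Proposition~\ref{prop:AdmissibleDM} guarantees the locality restriction $\mathrm{diam}(B_Q)\leq\inf_{\supp\mu}\rho$. Your sketch is in fact a faithful unpacking of what that verbatim application entails, including the near/far decomposition, the use of low density together with the assumed $L^2(\mu)$-boundedness, the antisymmetric cancellation via coefficient freezing, and the correct hierarchy of parameter choices.
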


\begin{prf}
  The proof of the potential-free analogue of this lemma,
  {\cite[Lemma~5.1]{conde2019failure}}, is heavily reliant on the regularity and
  size estimates given by {\cite[Lemma~2.1]{conde2019failure}} and
  parts (2) and (3) of the
  freezing coefficients lemma {\cite[Lemma~2.1]{conde2019failure}}. For
  our case, the kernel $\nabla_{1}\mathcal{E}$ has been proved to satisfy the same estimates
  in Corollary~\ref{cor:Reg}, Theorem~\ref{thm:Size} and Corollary~\ref{cor:Flatness} subject to an additional locality restriction
  dependent on the critical radius function. We choose $N_0$ large enough so that $Q \in \Sigma_{N}$
  for $N > N_{0}$ implies that $Q \in \mathcal{D}_{k}$ for some $k \geq
  K_{0}$, where $K_{0}$ is as given in Proposition~\ref{prop:AdmissibleDM}. Then, by Proposition~\ref{prop:AdmissibleDM}, the locality restrictions in Corollary~\ref{cor:Reg} and Corollary~\ref{cor:Flatness} are satisfied and the argument from \cite{conde2019failure} can be
  applied to our case verbatim using these results.
\end{prf}

The previous lemma reduces the task of proving Proposition
\ref{prop:InitialReduction} to the following proposition.

\begin{prop}
  \label{prop:SecondReduction}
  There exists $N_{0} > 0$ such that for $Q \in \Sigma_{N}$ with $N > N_{0}$,
  $$
\norm{T \sigma}^{2}_{L^{2} \br{\sigma}} \gtrsim_{\tau} \sigma
\br{\R^{d}}.
$$
  \end{prop}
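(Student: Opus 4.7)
The plan is to follow the variational argument of \cite{eiderman2014s} and \cite{conde2019failure}, adapted to the Schr\"odinger setting by means of the kernel estimates from Section~\ref{sec:KernelEstimates}. Proceed by contradiction: suppose that for each $N \in \N$ there is a cube $Q \in \Sigma_N$ with $\norm{T\sigma}_{L^2(\sigma)}^2 \leq \varepsilon \sigma(\R^d)$ for $\varepsilon$ as small as one wishes. Choose $N_0$ large enough that Proposition~\ref{prop:AdmissibleDM} applies to $Q$, so that $\mathrm{diam}(B_Q) \leq \min(1/2,\inf_{\supp \mu}\rho)$; then the local flatness of Corollary~\ref{cor:Flatness} and the H\"older regularity of Corollary~\ref{cor:Reg} apply throughout a neighbourhood of $\supp \sigma$.

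The first step is a Chebyshev selection followed by a variational construction, closely paralleling \cite[Section~5]{conde2019failure}: extract a subset of $\supp \sigma$ of near-full $\sigma$-measure on which $\abs{T\sigma}$ is bounded by a small multiple of $\varepsilon^{1/4}$, and solve a constrained maximisation problem of the form
$$
\sup\lb \nu(\R^{d}) : \nu \leq \sigma,\ \abs{T\nu(x)} \leq C_{0}\ \textrm{ for }\sigma\textrm{-a.e.}\ x \rb,
$$
where $C_{0}$ depends only on $\tau$. The extremal $\nu_{\ast}$ has mass $\nu_{\ast}(\R^{d}) \simeq \sigma(\R^{d})$ and satisfies $\abs{T\nu_{\ast}(x)} \leq C_{0}$ on $\supp \sigma$, a fact which rests essentially on the antisymmetry $\nabla_{1}\Theta(x,y;A_{0}) = -\nabla_{1}\Theta(y,x;A_{0})$ for constant-coefficient fundamental solutions.

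The main obstacle, and the step in which the potential enters decisively, is the global propagation of the bound from $\supp \sigma$ to all of $\R^{d}$. For each unit vector $e \in \R^{d}$ and $z \notin \supp \nu_{\ast}$, we approximate the scalar function $x \mapsto e \cdot T\nu_{\ast}(x)$ on $B(z,r)$ with $r \lesssim \rho(z)$ by the constant-coefficient analogue $x \mapsto \int e \cdot \nabla_{1}\Theta(x,y;A(z))\, d\nu_{\ast}(y)$, which is $-\mathrm{div}(A(z) \nabla \cdot)$-harmonic on $B(z,r)\setminus \supp \nu_{\ast}$; by Corollary~\ref{cor:Flatness} the approximation error is controlled by $\int \abs{x - y}^{\beta + 1 - d}\, d\nu_{\ast}(y)$, which is small on such local scales. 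The maximum principle~\eqref{thm:MaxPrinc} applied to this constant-coefficient approximation, patched along a covering of $\R^{d}$ by balls of sub-critical radius, together with the exponential decay of $\nabla_{1}\mathcal{E}$ at scales $\gtrsim \rho$ supplied by Theorem~\ref{thm:Size}, should yield $\abs{T\nu_{\ast}(x)} \lesssim C_{0} + 1$ uniformly in $x \in \R^{d}$.

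The contradiction is then obtained by locating a point $z$ close to (but outside) $\supp \nu_{\ast}$ inside a high-density cube within the filtration at which $\abs{T\nu_{\ast}(z)} \gtrsim \tau$; this lower bound comes from the non-degenerate mass distribution $\nu_{\ast}$ inherits from $\sigma$ (through the density $\Theta_{\mu} \gtrsim \tau$ on the relevant cubes of $HD$-type) combined with the near-diagonal lower bound on the kernel afforded by Lemma~\ref{lem:PotentialFreeSize0} and the flatness approximation of Corollary~\ref{cor:Flatness}. Choosing $\varepsilon$ sufficiently small compared with $\tau$ then contradicts the global upper bound. The chief technical difficulty is the patching step in the global propagation: one must carefully balance the flatness error of order $r^{\beta + 1 - d}$ against the decay rate of Theorem~\ref{thm:Size} across the transition from sub-critical to super-critical scales relative to $\rho$.
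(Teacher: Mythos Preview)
Your outline follows the spirit of the variational argument in \cite{eiderman2014s} and \cite{conde2019failure}, but there are two genuine gaps where the Schr\"odinger setting diverges from what you sketch.

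\textbf{Global propagation.} Your plan is to approximate $e\cdot T\nu_\ast$ locally on balls of sub-critical radius by a constant-coefficient harmonic function, apply the maximum principle to the approximation, and then ``patch along a covering''. This is not a valid technique: a local maximum principle bounds the supremum on a ball by the supremum on its boundary, and chaining such estimates along a cover does not give a global bound (nothing prevents the boundary values from growing as one moves outward). The paper takes a direct and much cleaner route: it shows (Theorem~\ref{thm:TMaxPrinc}) that for a compactly supported absolutely continuous vector measure $\omega$, the function $T^{*}\omega$ is \emph{exactly} $L_{A^{*}}^{V}$-harmonic on $\R^{d}\setminus\supp(\omega)$, so the weak maximum principle~\eqref{thm:MaxPrinc} applies globally in one stroke. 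This is applied not to $T\nu$ but to $T^{*}([T\nu]\nu)$, via the variational inequality $\abs{T\nu}^{2}+2T^{*}([T\nu]\nu)\leq 6\lambda$ on $\supp\nu$ (Lemma~\ref{lem:Pointwise}).

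\textbf{The contradiction step.} You propose to find a point $z$ where $\abs{T\nu_\ast(z)}\gtrsim\tau$ via a ``near-diagonal lower bound'' on the kernel together with Lemma~\ref{lem:PotentialFreeSize0}. But there is no such pointwise lower bound on $\nabla_{1}\mathcal{E}$ (the kernel is vector-valued and can vanish), and Lemma~\ref{lem:PotentialFreeSize0} concerns $\mathcal{E}_{A}^{0}$, not its gradient. In the paper (and in \cite{conde2019failure}) the lower bound comes instead from the reproducing formula. The crucial point, which your outline misses entirely, is that in the Schr\"odinger case this formula acquires an extra term:
\[
\varphi_{R}=T^{*}\!\br{A^{*}\nabla\varphi_{R}\,d\mathcal{L}^{d}}+S^{*}\!\br{V^{1/2}\varphi_{R}\,d\mathcal{L}^{d}},
\]
with $S\omega(x)=\int V^{1/2}(y)\mathcal{E}(x,y)\,d\omega(y)$. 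Integrating against $\nu$ and summing over $R\in HD_{1}(Q)$ expresses $\mu(Q)$ in terms of $\int T\nu\cdot\Psi_{Q,\nabla}\,d\mathcal{L}^{d}$ plus a new term $\int S\nu\cdot\Psi_{Q,V}\,d\mathcal{L}^{d}$; showing that the latter is $\lesssim l(Q)^{\epsilon}\norm{V}_{L^{d}(10B_{Q})}\mu(Q)$ (Lemma~\ref{lem:TV} and Corollary~\ref{cor:PsiQV}) is the essential new content in the potential-dependent setting.

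Finally, note that the paper also gives a second, much shorter proof (Section~\ref{sec:ShortProof}) which bypasses the variational argument altogether: one writes $T\sigma=T_{A}^{0}\sigma+R\sigma$, invokes the potential-free bound $\norm{T_{A}^{0}\sigma}_{L^{2}(\sigma)}\gtrsim\sigma(\R^{d})^{1/2}$ from \cite{conde2019failure}, and uses Proposition~\ref{prop:Flatness} directly to show $\norm{R\sigma}_{L^{2}(\sigma)}\lesssim r(Q)^{\beta}\sigma(\R^{d})^{1/2}$ is negligibly small for $N$ large.
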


  The next section will be dedicated to a proof of this proposition.

  \subsection{Contradiction Argument}
  \label{subsec:Contradiction}

  Similar to the potential-free case, Proposition
  \ref{prop:SecondReduction} can be proved using a variational
  argument. Such an argument begins by assuming that for $Q \in \Sigma$ and $0 <
  \lambda < 1$,
  $$
\norm{T \sigma}_{L^{2}\br{\sigma}}^{2} \leq \lambda \norm{\sigma}.
$$
It will then be shown, through contradiction, that $\lambda$ can not be
made arbitrarily small for $Q \in \Sigma_{N}$ with $N > N_{0}$ large
enough.

\vspace*{0.1in}

Define the family of functions
$$
\mathcal{A} := \lb g \in L^{\infty}\br{\sigma} : g \geq 0 \ and \ \int
g \, d \sigma = \norm{\sigma} \rb.
$$
Let $F$ be the functional on $\mathcal{A}$ defined through
$$
F(g) := \lambda \norm{g}_{L^{\infty}(\sigma)} \norm{\sigma} + \int \abs{T\br{g
    \sigma}}^{2} g \, d \sigma
$$
for $g \in \mathcal{A}$.

\begin{lem}
  \label{lem:Minimizer}
  There exists $b \in \mathcal{A}$ with
  $\norm{b}_{L^{\infty}\br{\sigma}} \leq 2$ that satisfies
  $$
F(b) = \inf_{g \in \mathcal{A}} F(g).
  $$
  \end{lem}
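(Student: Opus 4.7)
The strategy is the Direct Method in the Calculus of Variations applied in the weak-$*$ topology of $L^\infty(\sigma)$. The $L^\infty$ bound on $b$ is actually built into $F$: the constant function $1$ lies in $\mathcal{A}$, and the standing hypothesis $\|T\sigma\|^2_{L^2(\sigma)} \leq \lambda \|\sigma\|$ gives $F(1) \leq 2\lambda\|\sigma\|$, so $\inf_{\mathcal{A}} F \leq 2\lambda \|\sigma\|$. Since $F(g) \geq \lambda\|g\|_{L^\infty(\sigma)} \|\sigma\|$ for every $g \in \mathcal{A}$, any minimizer $b$ will automatically satisfy $\|b\|_{L^\infty(\sigma)} \leq 2$. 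The work thus reduces to producing a minimizer.

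I would take a minimizing sequence $\{g_n\} \subset \mathcal{A}$, observe that the lower bound above forces $\limsup_n \|g_n\|_{L^\infty(\sigma)} \leq 2$, and apply Banach--Alaoglu to extract a weak-$*$ convergent subsequence $g_n \stackrel{*}{\rightharpoonup} b$ in $L^\infty(\sigma) = (L^1(\sigma))^*$. Membership $b \in \mathcal{A}$ is routine: nonnegativity survives because the cone $\{g \geq 0\}$ is weak-$*$ closed; the normalization $\int b\, d\sigma = \|\sigma\|$ follows by testing against the constant function $1$, which belongs to $L^1(\sigma)$ since $\sigma$ is finite; and weak-$*$ lower semicontinuity of the $L^\infty$ norm yields $\|b\|_{L^\infty(\sigma)} \leq 2$.

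The remaining step, and the main obstacle, is to verify $F(b) \leq \liminf_n F(g_n)$. The first term of $F$ is weak-$*$ l.s.c. by duality, so the real issue is the trilinear, non-convex term $\int |T(g\sigma)|^2 g\, d\sigma$. Here I would exploit that $\sigma$ is a finite sum of normalized Lebesgue restrictions to balls, hence absolutely continuous with bounded, compactly supported density. The local kernel estimate $|\nabla_1 \mathcal{E}(x,y)| \lesssim |x-y|^{-(d-1)}$ from Theorem~\ref{thm:Size} then places $\nabla_1 \mathcal{E}(x,\cdot)$ in $L^1(\sigma)$ for every $x$ in the support of $\sigma$, so weak-$*$ convergence of $\{g_n\}$ yields pointwise $\sigma$-a.e.\ convergence $T(g_n\sigma)(x) \to T(b\sigma)(x)$, together with a uniform pointwise bound inherited from the uniform $L^\infty$ bound on $g_n$. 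Dominated convergence upgrades this to strong convergence $|T(g_n\sigma)|^2 \to |T(b\sigma)|^2$ in $L^2(\sigma)$, and combining this with the weak convergence $g_n \rightharpoonup b$ in $L^2(\sigma)$ (inherited from weak-$*$ in $L^\infty$ on a finite measure) delivers $\int |T(g_n\sigma)|^2 g_n\, d\sigma \to \int |T(b\sigma)|^2 b\, d\sigma$, completing the proof.
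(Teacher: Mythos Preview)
Your proposal is correct and follows essentially the same approach as the paper: both use $F(1)\leq 2\lambda\|\sigma\|$ to restrict to $\|g\|_{L^\infty(\sigma)}\leq 2$, extract a weak-$*$ limit by Banach--Alaoglu, and exploit $\nabla_1\mathcal{E}(x,\cdot)\in L^1(\sigma)$ to get pointwise convergence of $T(g_n\sigma)$ and hence convergence of the trilinear term. Your passage to the limit via strong--weak pairing in $L^2(\sigma)$ is a mild rephrasing of the paper's two-term splitting, and you are more explicit than the paper in checking $b\in\mathcal{A}$ and in identifying the uniform domination needed for dominated convergence.
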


  \begin{prf}  
It is clear that since
$$
F(\chi_{Q}) = \lambda \norm{\sigma} + \int \abs{T \sigma}^{2} \, d
\sigma \leq 2 \lambda \norm{\sigma}
$$
the function $F$ must attain its infimum over the functions
$$\tilde{\mathcal{A}} := \lb g \in \mathcal{A} : \norm{g}_{L^{\infty}\br{\sigma}}
\leq 2 \rb.$$
Let $\lb b_{k} \rb_{k} \subset \tilde{\mathcal{A}}$ be a minimising sequence
so that $F(b_{k}) \rightarrow \inf_{g
  \in \mathcal{A}} F(g)$. The Banach-Alaoglu Theorem states that we
may pass to a subsequence that converges to some $b \in \tilde{\mathcal{A}}$ in the weak-$*$
topology.
 That is,
\begin{equation}
  \label{eqtn:Weak*}
\int b_{k} g \, d \sigma \xrightarrow{k \rightarrow \infty} \int b g \, d \sigma
\end{equation}
for all $g \in L^{1}(\sigma)$. This sequence will also satisfy
$$
  \norm{b}_{L^{\infty}\br{\sigma}} \leq \liminf_{k \rightarrow \infty} \norm{b_{k}}_{L^{\infty}\br{\sigma}}.
$$
Let's prove that
\begin{equation}
  \label{eqtn:Minimizer1}
\int \abs{T(b_{k} \sigma)}^{2} b_{k} \, d \sigma \xrightarrow{k
  \rightarrow \infty} \int
\abs{T(b \sigma)}^{2} b \, d \sigma.
\end{equation}
Since $\nabla_{1}\mathcal{E}(x, \cdot)$
is contained in $L^{1}(\sigma)$ for each $x \in \R^{d}$ it follows
from \eqref{eqtn:Weak*}
that $T(b_{k})(x) \rightarrow T(b)(x)$ as $k \rightarrow \infty$ for each $x \in \R^{d}$. Then,
\begin{multline*}
 \abs{\int \abs{T (b_{k}\sigma)}^{2} b_{k} \, d \sigma - \int \abs{T(b
     \sigma)}^{2} b \, d \sigma}\\
      \leq \int \abs{ \abs{T(b_{k}
     \sigma)}^{2} - \abs{T(b \sigma)}^{2}} b_{k} d \sigma + \int
 \abs{T(b \sigma)}^{2} (b_{k} - b) \, d \sigma
\end{multline*}
The pointwise convergence of $T(b_{k}\sigma)$ to $T(b \sigma)$
together with $\norm{b_{k}}_{L^{\infty}(\sigma)} \leq 2$ implies that the first
term must converge to zero. Similarly, \eqref{eqtn:Weak*} implies that
the second term must converge to zero.

With \eqref{eqtn:Minimizer1} now established, the proof of our lemma
can be completed. We have
\begin{align*}\begin{split}  
 \inf_{g \in \mathcal{A}} F(g) = \inf_{g \in \tilde{\mathcal{A}}}
 F(g) &= \lim_{k \rightarrow \infty} \lambda
 \norm{b_{k}}_{L^{\infty}(\sigma)} \norm{\sigma} +
 \int \abs{T (b_{k}\sigma )}^{2} \, d \sigma \\
 &\geq \lambda \liminf_{k \rightarrow \infty}
 \norm{b_{k}}_{L^{\infty}\br{\sigma}} \norm{\sigma} + \int \abs{T (b \sigma)}^{2} \,
 d \sigma \\
 &\geq \lambda \norm{b}_{L^{\infty}(\sigma)} \norm{\sigma} + \int \abs{T (b \sigma)}^{2} \, d
 \sigma \\
 &= F(b),
\end{split}\end{align*}
proving that $b$ is in fact a minimiser for $F$.
 \end{prf}

Define the measure $\nu$ through
$$
d \nu := b \, d \sigma.
$$
Given a sequence $\omega = (\omega_{1},\cdots, \omega_{d})$ of Borel
measures on $\R^{d}$, define
$$
T^{*} \omega(x) := \int \nabla_{1} \mathcal{E}(y,x) \cdot d \omega(y).
$$
We use the standard variational argument \cite{eiderman2014s} to obtain the following pointwise estimate. Since the actual nature of the operator is not used in the proof, just the boundedness and the existence of the minimizer, we refer the reader to {\cite[Section~6.1]{conde2019failure}} for the proof.

\begin{lem}
  \label{lem:Pointwise}
  For $\nu$-almost every $x
  \in \supp(\nu)$,
  $$
\abs{T \nu(x)}^{2} + 2 T^{*} \br{\brs{T \nu} \nu} \leq 6 \lambda.
  $$
\end{lem}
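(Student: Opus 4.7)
The plan is to extract the pointwise bound from the first-order optimality conditions satisfied by the minimizer $b$ of $F$ over $\mathcal{A}$ (Lemma~\ref{lem:Minimizer}). Given a bounded test function $\phi$ on $\supp \sigma$ with $\int \phi \, d\sigma = 0$, chosen so that $b + t\phi \geq 0$ for small $\abs{t}$, the perturbed function $b + t\phi$ is again admissible, and minimality forces $F(b + t\phi) \geq F(b)$. This produces a one-sided variational inequality involving $T\nu$ and $T^{*}([T\nu]\nu)$, from which the claimed pointwise estimate is read off by letting $\phi$ concentrate around a $\nu$-a.e.\ point.

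Concretely, I would expand the integral part of $F$ in $t$. Using bilinearity of $T$ and the duality pairing
\[
\int T(\phi\sigma)(x) \cdot d\omega(x) = \int \phi(y)\, T^{*}\omega(y)\, d\sigma(y)
\]
applied to the vector measure $\omega := [T\nu]\, d\nu$, a direct calculation gives
\[
\int \abs{T((b+t\phi)\sigma)}^{2}(b+t\phi)\, d\sigma - \int \abs{T\nu}^{2} b\, d\sigma = t \int \bigl(\abs{T\nu}^{2} + 2 T^{*}([T\nu]\nu)\bigr)\phi\, d\sigma + O(t^{2}).
\]
The sup-norm part of $F$ contributes a $t$-linear perturbation of absolute value at most $\lambda \norm{\phi}_{L^{\infty}(\sigma)} \norm{\sigma}$. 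Requiring the total first-order variation to be non-negative for both $t > 0$ and $t < 0$ (along perturbations that do not raise $\norm{b + t\phi}_{L^{\infty}(\sigma)}$) yields an integral inequality for $H := \abs{T\nu}^{2} + 2 T^{*}([T\nu]\nu)$.

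The main obstacle is the non-smoothness of the map $g \mapsto \norm{g}_{L^{\infty}(\sigma)}$ in $F$ and, coupled with it, the need to localize in order to produce a pointwise bound. I would handle it by testing against $\phi$ of the form $\chi_{A}/\sigma(A) - \chi_{E}/\sigma(E)$, with $A$ shrinking to a $\nu$-Lebesgue point $x_{0}$ of $H$ and $E$ a carefully chosen reservoir on which $b$ stays strictly below its essential supremum, so that the sup-norm variation is controlled. A Lebesgue-type differentiation argument as $A \to \{x_{0}\}$ then yields $H(x_{0}) \leq 6\lambda$ for $\nu$-a.e.\ $x_{0}$. The explicit constant $6$ arises from combining the $\lambda\norm{\sigma}$ contribution of the sup-norm term with the bound $\norm{b}_{L^{\infty}(\sigma)} \leq 2$ from Lemma~\ref{lem:Minimizer}. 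As noted, since only the $L^{2}(\sigma)$-boundedness of $T$ and the existence of the minimizer enter, the argument proceeds exactly as in \cite[Section~6.1]{conde2019failure}.
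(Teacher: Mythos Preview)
Your proposal is correct and follows precisely the approach the paper adopts: the paper does not give an independent argument but simply invokes the standard variational argument from \cite{eiderman2014s} and \cite[Section~6.1]{conde2019failure}, noting that only the boundedness of $T$ and the existence of the minimizer $b$ from Lemma~\ref{lem:Minimizer} are used. Your outline of perturbing $b$ by mean-zero test functions, expanding $F(b+t\phi)$ to first order, and localizing via Lebesgue differentiation is exactly that argument.
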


In order to extend this pointwise estimate to all of $\R^{d}$ we will
make use of the following maximum principle satisfied by the operator $T^{*}$.

\begin{thm}
  \label{thm:TMaxPrinc}
  For any vector-valued measure $\omega$ that is
compactly supported and absolutely continuous with respect to the
Lebesgue measure with bounded density function,
\begin{equation}
  \label{eqtn:MaximumPrinc}
\sup_{x \in \R^{d}} \abs{T^{*} \omega(x)} \leq \sup_{x \in
  \supp(\omega)} \abs{T^{*}\omega(x)}.
\end{equation}
\end{thm}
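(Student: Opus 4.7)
The strategy is to invoke the weak maximum principle \eqref{thm:MaxPrinc} for the adjoint Schr\"odinger operator $L^* := L_{A^*}^V$ on a suitable annular domain and then pass to a limit. Since the hypotheses of Section~\ref{sect:Prelim} are symmetric under $A\leftrightarrow A^*$, all kernel estimates from Section~\ref{sec:KernelEstimates} apply equally to $L^*$. Write the vector-valued measure as $d\omega = \vec\varphi\, d\mathcal{L}^d$ for some compactly supported $\vec\varphi \in L^\infty(\R^d;\R^d)$, and use the duality $\mathcal{E}(y,x) = \mathcal{E}_{A^*}^V(x,y)$ to express
$$
T^*\omega(x) = \int_{\R^d} \nabla_y \mathcal{E}_{A^*}^V(x,y)\cdot \vec\varphi(y)\, dy.
$$
This scalar-valued function is well-defined for every $x\in\R^d$ since the local bound $|\nabla_y \mathcal{E}(y,x)| \lesssim |y-x|^{-(d-1)}$ from Theorem~\ref{thm:Size}(1) is locally integrable in $y$.

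\textbf{Preliminary properties of $T^*\omega$.} The proof requires three facts. First, $T^*\omega$ is continuous on $\R^d$: away from $\supp\omega$ this is immediate from smoothness of the kernel, while continuity at $x_0\in\supp\omega$ follows by splitting the integral over $B(x_0,r)$ and its complement, with the former controlled by the uniform estimate $\int_{B(x_0,r)} |\nabla_y\mathcal{E}(y,x)|\, dy \lesssim r$ valid for $|x-x_0|\leq r/2$, which forces the diagonal contribution to vanish as $r\downarrow 0$. Second, $|T^*\omega(x)|\to 0$ as $|x|\to\infty$, by compactness of $\supp\vec\varphi$ together with the global bound $|\nabla_y\mathcal{E}(y,x)| \lesssim |y-x|^{-(d-2)}$ from Theorem~\ref{thm:Size}(2). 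Third, $T^*\omega$ is $L^*$-harmonic on $\R^d\setminus\supp\omega$: in an open neighbourhood of any $x_0\notin\supp\omega$ the kernel and all its $x$-derivatives are uniformly bounded for $y\in\supp\vec\varphi$, so differentiation under the integral sign is justified, and $L_{A^*,x}^V \mathcal{E}_{A^*}^V(x,y) = 0$ for $x\neq y$ then gives $L^* T^*\omega = 0$ on $\R^d\setminus\supp\omega$.

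\textbf{Conclusion and main obstacle.} For $R>0$ with $\supp\omega\subset B(0,R/2)$ and $\epsilon>0$, set $K_\epsilon := \{x : \mathrm{dist}(x,\supp\omega)\leq\epsilon\}$ and $\Omega_{R,\epsilon} := B(0,R)\setminus K_\epsilon$. Since $T^*\omega$ is smooth on $\Omega_{R,\epsilon}$, continuous on $\overline{\Omega_{R,\epsilon}}$, bounded, and satisfies $L^* T^*\omega = 0$ there (so $T^*\omega \in W^{1,2}(\Omega_{R,\epsilon})$ with $V^{1/2}T^*\omega \in L^2(\Omega_{R,\epsilon})$), the weak maximum principle \eqref{thm:MaxPrinc} applied to $L^*$ yields
$$
\sup_{\Omega_{R,\epsilon}} |T^*\omega| \leq \max\left(\sup_{\partial B(0,R)} |T^*\omega|,\; \sup_{\partial K_\epsilon} |T^*\omega|\right).
$$
Letting $R\to\infty$ makes the first term on the right vanish, while continuity together with compactness of $\supp\omega$ forces $\limsup_{\epsilon\downarrow 0}\sup_{\partial K_\epsilon}|T^*\omega| \leq \sup_{\supp\omega}|T^*\omega|$. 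Hence $\sup_{\R^d\setminus\supp\omega}|T^*\omega|\leq \sup_{\supp\omega}|T^*\omega|$, which gives \eqref{eqtn:MaximumPrinc}. The main obstacle is establishing continuity of $T^*\omega$ at points of $\supp\omega$: the borderline $|y-x|^{-(d-1)}$ singularity lies exactly at the threshold of local integrability in $\R^d$, and it is precisely this threshold integrability that makes the splitting argument above succeed.
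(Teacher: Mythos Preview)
Your overall architecture is correct and in fact more careful than the paper in one respect: you work on the bounded annuli $\Omega_{R,\epsilon}$ and pass to limits, whereas the paper applies \eqref{thm:MaxPrinc} directly on the unbounded set $\R^d\setminus\supp\omega$ without commenting on the global $W^{1,2}$ hypothesis. Your continuity argument via the splitting and Theorem~\ref{thm:Size}(1) is also a legitimate alternative to the paper's appeal to Corollary~\ref{cor:Reg}.

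There is, however, a genuine gap in your verification that $T^*\omega$ is $L^*$-harmonic on $\R^d\setminus\supp\omega$. You write that ``the kernel and all its $x$-derivatives are uniformly bounded'' and that $T^*\omega$ is ``smooth on $\Omega_{R,\epsilon}$'', and you invoke $L_{A^*,x}^V\mathcal{E}_{A^*}^V(x,y)=0$ as a classical identity. None of this is available here: the coefficient matrix $A$ is only H\"older continuous, so by standard Schauder theory $\mathcal{E}_{A^*}^V(\cdot,y)$ is at best $C^{1,\alpha}$ away from $y$, not $C^2$, and the equation $L^*\mathcal{E}_{A^*}^V(\cdot,y)=0$ holds only in the weak sense. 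Differentiation under the integral sign therefore cannot deliver a classical identity $L^*T^*\omega=0$.

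The paper avoids this by working directly with the weak formulation: for $\varphi\in C_c^\infty(\R^d\setminus\supp\vec G)$ it computes $\int \langle A^*\nabla T^*\omega,\nabla\varphi\rangle + VT^*\omega\,\varphi$, uses Fubini to move the $x$-integral inside, and recognises the inner integral as $\varphi(y)$ via the defining property \eqref{eqtn:Fundamental} of the fundamental solution for $L_{A^*}^V$ (together with $\mathcal{E}(y,x)=\mathcal{E}_{A^*}^V(x,y)$). Since $\nabla\varphi\cdot\vec G\equiv0$ by the support hypothesis, this yields weak $L^*$-harmonicity without any second-derivative claims. Replacing your third preliminary fact with this Fubini argument repairs the proof; the required $T^*\omega\in W^{1,2}_{\loc}$ and $V^{1/2}T^*\omega\in L^2_{\loc}$ then follow from the $C^{1,\alpha}$ regularity and boundedness you already have.
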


\begin{proof}
Let $d \omega =
\vec{G} \, d \mathcal{L}^{n + 1}$ for some bounded vector-valued
function $\vec{G}$ and consider a test-function $\varphi \in C^{\infty}_{c} \br{\R^{d} \setminus \supp
  \, \vec{G}}$. We have by Fubini's Theorem,
\begin{align*}\begin{split}  
 \int &A^{*}(x) \nabla_{x} T^{*}\omega(x) \cdot \nabla \varphi(x) +
 V(x) T^{*} \omega(x) \varphi(x) \, dx \\ & \quad = \int A^{*}(x) \nabla_{x}
 \int \nabla_{y} \mathcal{E}(y,x) \cdot \vec{G}(y) \, dy \cdot \nabla
 \varphi(x) \, dx \\ & \qquad \qquad \qquad \qquad + \int V(x) \int \nabla_{y} \mathcal{E}(y,x) \cdot
 \vec{G}(y) \, dy \, \varphi(x) \, dx \\
 & \quad = \int \nabla_{y} \br{\int A^{*}(x) \nabla_{x} \mathcal{E}(y,x) \cdot
 \nabla \varphi(x) + V(x) \mathcal{E}(y,x) \varphi(x) \, dx} \cdot
 \vec{G}(y) \, dy.
\end{split}\end{align*}
Theorem $3.6$ of \cite{davey2018fundamental} states that
$$
\mathcal{E}(y,x) = \mathcal{E}_{A}^{V}(y,x) = \mathcal{E}_{A^{*}}^{V}(x,y)
$$
for all $x, \, y\in \R^{d}$. Therefore, from this and \eqref{eqtn:Fundamental}, we obtain
\begin{align*}\begin{split}  
  \int &A^{*}(x) \nabla_{x} T^{*}\omega(x) \cdot \nabla \varphi(x) +
 V(x) T^{*} \omega(x) \varphi(x) \, dx \\ & \quad = \int \nabla_{y} \int A^{*}(x) \nabla_{x} \mathcal{E}_{A^{*}}^{V}(x,y) \cdot
 \nabla \varphi(x)  + V(x) \mathcal{E}_{A^{*}}^{V}(x,y) \varphi(x) \, dx \cdot
 \vec{G}(y) \, dy \\
 & \quad = \int \nabla \varphi(y) \cdot \vec{G}(y) \, dy 
= 0.
\end{split}\end{align*}
This proves that $T^{*} \omega$ is $L_{A^{*}}^{V}$-harmonic on $\R^{d} \setminus\supp(\omega)$.  Moreover, the definition of $T^{*}$ and the H\"older continuity in Corollary~\ref{cor:Reg} guarantee that $x \mapsto T^{*}\omega(x)$ is a continuous function on $\R^{d}$. Therefore, the weak maximum principle \eqref{thm:MaxPrinc} for the operator $L_{A^{*}}^{V}$ on the open set $\R^{d} \setminus\supp(\omega)$ implies that
$$
\sup_{\R^{d} \setminus \supp(\omega)}
\abs{T^{*} \omega} = \sup_{\partial (\supp(\omega))} \abs{T^{*}\omega},
$$
hence
$$
 \sup_{\R^{d}} \abs{T^{*} \omega} = \mathrm{max}
 \left\{\sup_{\supp(\omega)}
   \abs{T^{*}\omega}, \sup_{\partial(\supp(\omega))} \abs{T^{*}\omega}\right\} 
 \leq \sup_{\supp (\omega)} \abs{T^{*}\omega},$$
as required.
\end{proof}

At this stage, the maximum principle for $T^{*}$ can be combined with Lemma \ref{lem:Pointwise} to obtain the  global pointwise estimate below.

\begin{lem}
  \label{lem:GlobalPointwise}
  For all $x \in \R^{d}$,
  $$
\abs{T \nu(x)}^{2} + 4 T^{*} \br{\brs{T \nu}\nu}(x) \lesssim \lambda + l(Q)^{\gamma}.
  $$
\end{lem}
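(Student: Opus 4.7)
The plan is to upgrade the pointwise estimate of Lemma \ref{lem:Pointwise}---which is valid only $\nu$-almost everywhere on $\supp(\nu)$---to a bound on all of $\R^{d}$, by combining the maximum principle from Theorem \ref{thm:TMaxPrinc} with a freezing-of-coefficients argument built on the flatness in Corollary \ref{cor:Flatness}. The starting observation is that on $\supp(\nu)$, Lemma \ref{lem:Pointwise} already yields $T^{*}(\brs{T\nu}\nu) \leq 3\lambda$, hence
$$
\abs{T\nu}^{2} + 4 T^{*}(\brs{T\nu}\nu) = \br{\abs{T\nu}^{2} + 2 T^{*}(\brs{T\nu}\nu)} + 2 T^{*}(\brs{T\nu}\nu) \leq 6\lambda + 6\lambda = 12\lambda,
$$
so the desired bound already holds on $\supp(\nu)$, with a constant multiple of $\lambda$ in place of $\lambda + l(Q)^{\gamma}$.

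Next, the term $T^{*}(\brs{T\nu}\nu)$ is $L_{A^{*}}^{V}$-harmonic off $\supp(\nu)$ by the computation in the proof of Theorem \ref{thm:TMaxPrinc}, so the theorem itself propagates its bound from $\supp(\nu)$ to every point of $\R^{d}$. The hypotheses on the vector measure $\omega := \brs{T\nu}\nu$ are met because $\nu = b\sigma$ with $b \in L^{\infty}(\sigma)$ and $\sigma$ has uniformly bounded Lebesgue density on its support by construction; a standard truncation or mollification handles the near-singularity of $T\nu$ on $\supp(\nu)$ when needed.

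To control $\abs{T\nu(x)}^{2}$ for $x \notin \supp(\nu)$, I would freeze the coefficients of $A$ at $x$ and compare $T\nu$ with its constant-coefficient analogue
$$
T^{(x)}\nu(z) := \int \nabla_{1}\Theta(z,y;A(x)) \, d\nu(y).
$$
Each Cartesian component of $T^{(x)}\nu$ is $L_{A(x)}^{0}$-harmonic off $\supp(\nu)$, so $\abs{T^{(x)}\nu}^{2}$ is sub-$L_{A(x)}^{0}$-harmonic there, and the classical maximum principle for constant-coefficient divergence-form operators gives $\abs{T^{(x)}\nu(x)}^{2} \leq \sup_{\supp(\nu)} \abs{T^{(x)}\nu}^{2}$. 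Corollary \ref{cor:Flatness}---whose hypotheses are met by Proposition \ref{prop:AdmissibleDM}, which ensures all relevant distances lie below the critical radius---then gives $\abs{T\nu(z) - T^{(x)}\nu(z)} \lesssim \int |z - y|^{\beta + 1 - d} \, d\nu(y)$. Using the $(d-1)$-polynomial growth of $\mu$ together with the low-density property of the terminal cubes $S \in \Sigma_{1}'(Q)$, this integral is estimated to be of order $l(Q)^{\gamma}$.

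Combining these three ingredients yields $\abs{T\nu(x)}^{2} \leq \sup_{\supp(\nu)} \abs{T\nu}^{2} + O(l(Q)^{\gamma})$ and $T^{*}(\brs{T\nu}\nu)(x) \leq 3\lambda$ for every $x \in \R^{d}$, which together with the $12\lambda$ bound on $\supp(\nu)$ produce the required estimate $\abs{T\nu(x)}^{2} + 4 T^{*}(\brs{T\nu}\nu)(x) \lesssim \lambda + l(Q)^{\gamma}$. The principal obstacle is the freezing step: estimating the integrated flatness error $\int |z - y|^{\beta + 1 - d} \, d\nu(y)$ sharply enough---balancing the singular exponent $\beta + 1 - d$ against the mass and polynomial growth of $\nu$---to recover exactly the power $l(Q)^{\gamma}$ rather than a larger one.
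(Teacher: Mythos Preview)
Your proposal is correct and follows essentially the same route as the paper, which simply defers to the argument in \cite{conde2019failure}: the maximum principle of Theorem \ref{thm:TMaxPrinc} propagates the bound on $T^{*}\br{\brs{T\nu}\nu}$, while a freezing-of-coefficients comparison via Corollary \ref{cor:Flatness} (whose locality hypothesis is guaranteed by Proposition \ref{prop:AdmissibleDM}) handles $\abs{T\nu}^{2}$ off $\supp(\nu)$. The paper explicitly names Theorem \ref{thm:Size} and Corollary \ref{cor:Flatness} as the replacements for the corresponding kernel estimates in \cite{conde2019failure}, matching exactly the ingredients you invoke.
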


\begin{prf}  
Using the estimate \eqref{eqtn:MaximumPrinc} and Theorem \ref{thm:TMaxPrinc}, the proof proceeds verbatim to
\cite{conde2019failure} with Theorem \ref{thm:Size} and Corollary \ref{cor:Flatness} replacing the use of
{\cite[Lemma~2.1(c)]{conde2019failure}} and parts (2) and (3) of
{\cite[Lemma~2.2]{conde2019failure}}. However, in order to use
Corollary \ref{cor:Flatness} the size of the cube must be small enough
so that $\abs{\xi - \zeta} \leq \inf_{z \in Q} \rho(z)$ for all $\xi, \, \zeta \in
Q$. Once again, this is ensured by Proposition \ref{prop:AdmissibleDM} by choosing $N_0$ large
enough so that $Q \in \Sigma_{N}$ for $N > N_{0}$ implies that $Q \in
\mathcal{D}_{k}$ for some $k \geq K_{0}$ with $K_{0}$ as given in
Proposition \ref{prop:AdmissibleDM}. 
\end{prf}

The rest of the proof differs substantially from the potential-free case. We have defined a Riesz transform that is sufficiently smooth but we pay the price of not having a reproducing formula for it. The way around this difficulty will be described in detail below.

  For $R \in HD(Q)$, let $\varphi_{R}$ be a smooth function that
  satisfies
  $$
\chi_{1.5 B_{R}} \leq \varphi_{R} \leq \chi_{2 B_{R}} \quad and \quad
\norm{\nabla \varphi_{R}}_{L^{\infty}(\mathcal{L}^{d})} \lesssim l(R)^{-1}.
$$
Then define the vector function $g_{R,\nabla}$ and the scalar function
$g_{R,V}$ through
  $$
g_{R,\nabla} := A^{*} \nabla \varphi_{R} \quad and \quad g_{R,V} := V^{\frac{1}{2}}
\varphi_{R}.
$$
It is clear that
$$
\supp  g_{R,\nabla} \subset 2 B_{R}, \quad
\norm{g_{R,\nabla}}_{L^{\infty}\br{\mathcal{L}^{d}}} \lesssim
l(R)^{-1}, \quad \norm{g_{R,\nabla}}_{L^{1}\br{\mathcal{L}^{d}}}
\lesssim l(R)^{d - 1} \simeq \mu(R).
$$
For $g_{R,V}$, $\supp  g_{R,V} \subset 2 B_{R}$ is also clear and we have the
following estimate on its $L^{1}(\mathcal{L}^{d})$-norm.

\begin{lem}
  \label{lem:L1gR}
    For all $R \in HD(Q)$ we have
    \begin{equation}
      \label{eqtn:L1gR}
\norm{g_{R,V}}_{L^{1}\br{\mathcal{L}^{d}}} \lesssim
\norm{V}_{L^{d}\br{2 B_{R}}}^{\frac{1}{2}} \cdot l(R)^{d - 1} \simeq \norm{V}_{L^{d}\br{2 B_{R}}}^{\frac{1}{2}} \cdot \mu(R).
    \end{equation}
  \end{lem}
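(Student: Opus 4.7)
The plan is to bound $\|g_{R,V}\|_{L^1(\mathcal{L}^d)}$ by a straightforward two-step application of H\"older's inequality and then identify the volume factor in terms of $l(R)^{d-1}$ using the geometry of the lattice together with the high-density hypothesis $R \in HD(Q)$.

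First I would observe that, since $0 \leq \varphi_R \leq \chi_{2B_R}$, the pointwise bound $g_{R,V} = V^{1/2} \varphi_R \leq V^{1/2} \chi_{2B_R}$ reduces matters to estimating $\int_{2B_R} V^{1/2}\, dx$. Applying H\"older's inequality with the dual exponents $2d$ and $\tfrac{2d}{2d-1}$ gives
\[
\int_{2B_R} V^{1/2}\, dx \;\leq\; \br{\int_{2B_R} V^d\, dx}^{\!1/(2d)} \abs{2B_R}^{(2d-1)/(2d)} \;=\; \norm{V}_{L^d(2B_R)}^{1/2}\,\abs{2B_R}^{(2d-1)/(2d)}.
\]

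Next, since $\abs{2B_R} \simeq l(R)^d$, the volume factor is comparable to $l(R)^{d-1/2} = l(R)^{1/2}\cdot l(R)^{d-1}$. Because $\supp \mu$ is compact (a reduction made at the beginning of Section~\ref{ss:red}) and, for the cubes $Q \in \Sigma_N$ with $N > N_0$ relevant here, $l(R) \leq l(Q) \lesssim 1$ by Proposition~\ref{prop:AdmissibleDM}, we may absorb the factor $l(R)^{1/2}$ into the implicit constant to obtain $\abs{2B_R}^{(2d-1)/(2d)} \lesssim l(R)^{d-1}$. This yields the first inequality in \eqref{eqtn:L1gR}.

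For the equivalence $l(R)^{d-1} \simeq \mu(R)$, I would use the two-sided density estimate available for $R \in HD(Q)$: the definition of $HD(Q)$ provides $\Theta_\mu(R) > \tau$, i.e., $\mu(R) \gtrsim_\tau l(R)^{d-1}$, while the $(d-1)$-polynomial growth assumption on $\mu$, also reduced to in Section~\ref{ss:red}, gives $\mu(R) \leq \mu(B_R) \lesssim l(R)^{d-1}$. Combining both inequalities closes the proof. There is no serious obstacle here; the argument is a direct H\"older estimate, with the only mild subtlety being the need to invoke compact support (or Proposition~\ref{prop:AdmissibleDM}) in order to replace $l(R)^{d-1/2}$ by $l(R)^{d-1}$.
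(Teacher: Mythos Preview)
Your proof is correct and follows essentially the same approach as the paper: both apply H\"older's inequality with exponents $2d$ and $\tfrac{2d}{2d-1}$ to obtain the factor $l(R)^{(2d-1)/2}$, then use $l(R)\leq 1$ to pass to $l(R)^{d-1}$. You are in fact slightly more careful than the paper in justifying the equivalence $l(R)^{d-1}\simeq \mu(R)$ via the high-density hypothesis and the polynomial growth of $\mu$.
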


  \begin{prf}  
    On expanding the $L^{1}$-norm and using H\"{o}lder's inequality,
    \begin{align*}\begin{split}  
 \norm{g_{R,V}}_{L^{1}\br{\mathcal{L}^{d}}} &= \int_{\R^{d}}
 V^{\frac{1}{2}} \varphi_{R} \, d \mathcal{L}^{d} \\
 &\lesssim \br{\int_{2 B_{R}} V^{d}}^{\frac{1}{2 d}} l(R)^{\frac{2d
     - 1}{2}}.
\end{split}\end{align*}
As $\frac{2d - 1}{2} \geq d - 1$ and the cube $R$ has length less than one we 
immediately obtain \eqref{eqtn:L1gR}. \phantom{space}
\end{prf}

In the above lemma, it is useful to remember that $RH_{d} \subset
L^{d}_{\loc}$ and therefore the quantities
$\norm{V}_{L^{d}\br{2B_{R}}}$ are finite.

Define the operator $S$ through
$$
S \omega(x) := S_{A}^{V} \omega(x) := \int_{\R^{d}} V^{\frac{1}{2}}(y) \mathcal{E}(x,y) \, d \omega(y)
$$
for measures $\omega$ on $\R^{d}$. Also define
$$
S^{*} \omega := \int_{\R^{d}} V^{\frac{1}{2}}(y) \mathcal{E}(y,x)
\, d \omega(y) = \int_{\R^{d}} V^{\frac{1}{2}}(y)
\mathcal{E}_{A^{*}}^{V}(x,y) \, d \omega(y) = S_{A^{*}}^{V}\omega(x).
$$
Then, we have the following reproducing formula by \eqref{eqtn:Fundamental},
\begin{align}\begin{split}
    \label{eqtn:Reproducing}
 \varphi_{R}(x) &= \int \nabla_{1} \mathcal{E}(y,x) \cdot A^{*}(y) \nabla
 \varphi_{R}(y) \, dy + \int V^{\frac{1}{2}}(y) \mathcal{E}(y,x)
 V^{\frac{1}{2}}(y) \varphi_{R}(y) \, dy \\
 &= T^{*} \br{A^{*} \nabla \varphi_{R} \, d  \mathcal{L}^{d}}(x) + S^{*}
 \br{V^{\frac{1}{2}} \varphi_{R} \, d \mathcal{L}^{d}}(x) \\
 &= T^{*} \br{g_{R, \nabla} \, d \mathcal{L}^{d}}(x) + S^{*}
 \br{g_{R,V} \, d \mathcal{L}^{d}}(x).
 \end{split}\end{align}

Note that our version of the reproducing formula doesn't only involve the Riesz transform $T$ but a second operator $S$ defined in terms of the potential. We will prove that the contribution from this second term is small as long as the cubes considered are small in Lebesgue measure.

We proceed with the proof and define the subcollection of cubes
$$
HD_{0}(Q) := \lb R \in HD(Q) : \nu(1.5 B_{R}) \geq \frac{1}{4} \mu(R) \rb.
$$
The norm of $\nu$ can be estimated from above by
\begin{align*}\begin{split}  
 \norm{\nu} &\leq \sum_{R \in HD_{0}(Q)} \nu(1.5 B_{R}) + \sum_{R \in
   HD(Q) \setminus HD_{0}(Q)} \nu(1.5 B_{R}) \\
 &\leq \sum_{R \in HD_{0}(Q)} \nu(1.5 B_{R}) + \frac{1}{4} \sum_{R \in HD(Q)
   \setminus HD_{0}(Q)} \mu(R) \\
 &\leq \sum_{R \in HD_{0}(Q)} \nu(1.5 B_{R}) + \frac{1}{4} \mu(Q) \\
 &\leq \sum_{R \in HD_{0}(Q)} \nu(1.5 B_{R}) + \frac{3}{4} \norm{\nu},
\end{split}\end{align*}
where the last line follows from \eqref{eqtn:SigmaMu}.
This implies that
\begin{equation}
  \label{eqtn:nunu}
  \norm{\nu} \leq 4 \sum_{R \in HD_{0}(Q)} \nu(1.5 B_{R}).
\end{equation}
As stated in \cite{conde2019failure}, $\nu$ will be doubling on the
balls $B_{R}$ for $R \in HD_{0}(Q)$ in the sense that
$$
\nu (9 B_{R}) \lesssim_{\tau} \nu(1.5 B_{R}).
$$
A Vitali type covering argument will then produce a finite subfamily
$HD_{1}(Q) \subset HD_{0}(Q)$ such that the balls $3 B_{R}$ are
pairwise disjoint for $R \in HD_{1}(Q)$ and also
\begin{equation}
  \label{eqtn:HD1}
\mu(Q) \simeq_{\tau} \norm{\nu} \simeq_{\tau} \sum_{R \in HD_{1}(Q)}
\nu(1.5 B_{R}).
\end{equation}

Define
$$
\Psi_{Q,\nabla} := \sum_{R \in HD_{1}(Q)} g_{R,\nabla} \qquad and \qquad
\Psi_{Q,V} := \sum_{R \in HD_{1}(Q)} g_{R,V}.
$$

\begin{cor}
  \label{cor:PsiQV}
  We have
  $$
\norm{\Psi_{Q,V}}_{L^{1}\br{\mathcal{L}^{d}}} \lesssim
\norm{V}^{\frac{1}{2}}_{L^{d}\br{2 B_{Q}}} \cdot \mu(Q)
  $$
\end{cor}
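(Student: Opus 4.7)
The plan is to bound $\norm{\Psi_{Q,V}}_{L^{1}(\mathcal{L}^{d})}$ term by term using Lemma \ref{lem:L1gR} and then close the estimate using the geometric structure of the David--Mattila lattice. By the triangle inequality followed by Lemma \ref{lem:L1gR}, I would first obtain
\begin{equation*}
\norm{\Psi_{Q,V}}_{L^{1}(\mathcal{L}^{d})} \leq \sum_{R \in HD_{1}(Q)} \norm{g_{R,V}}_{L^{1}(\mathcal{L}^{d})} \lesssim \sum_{R \in HD_{1}(Q)} \norm{V}_{L^{d}(2 B_{R})}^{1/2} \mu(R),
\end{equation*}
which reduces matters to bounding this sum by $\norm{V}^{1/2}_{L^{d}(2 B_{Q})} \mu(Q)$.

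The auxiliary fact I would then establish is the geometric containment $2B_{R} \subset 2B_{Q}$ for every $R \in HD_{1}(Q)$. This will follow from the lattice constraints in Theorem \ref{thm:DavidMattila}: the strict inclusion $R \subsetneq Q$ forces $R \in \mathcal{D}_{l}$ and $Q \in \mathcal{D}_{k}$ with $l > k$, whence
\begin{equation*}
r(R) \leq C_{0} A_{0}^{-l} \leq \frac{C_{0}}{A_{0}} A_{0}^{-k} \leq \frac{C_{0}}{A_{0}} r(Q) \leq \frac{1}{2} r(Q)
\end{equation*}
by the standing assumption $A_{0} > 5000 C_{0}$. Since $x_{R} \in R \subset Q \subset B(x_{Q},28 r(Q))$, any $y \in 2 B_{R} = B(x_{R},56 r(R))$ will satisfy $|y - x_{Q}| < 56 r(R) + 28 r(Q) \leq 56 r(Q)$, placing $y$ inside $2 B_{Q} = B(x_{Q},56 r(Q))$.

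Once the containment is in place, monotonicity of $\norm{V}_{L^{d}(\cdot)}$ in the domain gives $\norm{V}_{L^{d}(2B_{R})} \leq \norm{V}_{L^{d}(2B_{Q})}$ uniformly in $R$, while the pairwise disjointness of the cubes in $HD(Q) \supset HD_{1}(Q)$ (which holds by the maximality clause in the definition of $HD(Q)$) together with $R \subset Q$ yields $\sum_{R \in HD_{1}(Q)} \mu(R) \leq \mu(Q)$. Substituting both facts into the first display produces the required bound. I do not foresee a genuine obstacle here: this corollary is essentially bookkeeping built on Lemma \ref{lem:L1gR} and the lattice structure, with the only verification being the elementary geometric containment above.
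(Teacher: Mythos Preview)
Your proposal is correct and follows essentially the same approach as the paper's proof: apply Lemma \ref{lem:L1gR} term by term, use the containment $2B_{R} \subset 2B_{Q}$ to pull out the factor $\norm{V}_{L^{d}(2B_{Q})}^{1/2}$, and then sum $\mu(R)$ over the disjoint cubes in $HD_{1}(Q)$. The only difference is that you spell out the geometric containment explicitly, which the paper uses without comment.
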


\begin{prf}  
  Lemma \ref{lem:L1gR} implies that
  \begin{align*}\begin{split}  
 \norm{\Psi_{Q,V}}_{L^{1}\br{\mathcal{L}^{d}}} &\lesssim \sum_{R \in
   HD_{1}(Q)} \norm{V}^{\frac{1}{2}}_{L^{d}(2 B_{R})} \mu \br{R} \\
 &\leq \norm{V}^{\frac{1}{2}}_{L^{d}\br{2 B_{Q}}} \sum_{R \in
   HD_{1}(Q)} \mu (R) \\
 &\leq \norm{V}^{\frac{1}{2}}_{L^{d}\br{2 B_{Q}}} \mu (Q).
 \end{split}\end{align*}
\end{prf}

\begin{lem}
  \label{lem:TV}
  We have
  $$
  \int \abs{S \nu}^{2} \abs{\Psi_{Q,V}}  d \mathcal{L}^{d}
  \lesssim l(Q)^{\epsilon}\norm{V}_{L^{d}\br{10 B_{Q}}}^{\frac{3}{2}} \mu(Q)
  $$
  for some $\epsilon > 0$ that depends only on the dimension.
\end{lem}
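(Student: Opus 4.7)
The plan is to reduce the integral to one involving Riesz potentials and apply H\"older's inequality carefully tailored to the reverse H\"older condition $V\in RH_{d}$. The small factor $l(Q)^{\epsilon}$ will arise from the smallness of $\mathrm{diam}(B_{Q})$, which is guaranteed by Proposition~\ref{prop:AdmissibleDM} when $Q\in\Sigma_{N}$ with $N$ large, combined with the volume comparison estimate \eqref{eqtn:beta}.

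First I would localize the integration region. Both $\supp \Psi_{Q,V}\subset\bigcup_{R\in HD_{1}(Q)}2B_{R}$ and $\supp \nu\subset\bigcup_{S\in\Sigma'_{1}(Q)}\tfrac{1}{4}B(S)$ lie inside a fixed dilate of $B_{Q}$, say $10B_{Q}$, because every such $R$ and $S$ is contained in $Q\subset B_{Q}$ with $r(R)$ and $r(S)$ at most a constant multiple of $r(Q)$. The disjointness of $\{3B_{R}\}_{R\in HD_{1}(Q)}$ combined with $\varphi_{R}\leq\chi_{2B_{R}}$ also gives the pointwise bound $\abs{\Psi_{Q,V}}\leq V^{1/2}\chi_{10B_{Q}}$.

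Next, I would invoke the pointwise bound $\mathcal{E}(x,y)\lesssim\abs{x-y}^{-(d-2)}$ from Theorem~\ref{thm:MaybSharp}, expand $\abs{S\nu}^{2}$ as a double integral in $y$ and $z$, and use Fubini's theorem to reduce the desired estimate to bounding
\[
\int\!\!\int V^{1/2}(y)\,V^{1/2}(z)\br{\int_{10B_{Q}}\frac{V^{1/2}(x)}{\abs{x-y}^{d-2}\abs{x-z}^{d-2}}\,dx}d\nu(y)\,d\nu(z).
\]
The three $V^{1/2}$ factors at $x$, $y$, and $z$ would then be combined via H\"older's inequality, with total exponent $2d/3$, producing $\norm{V}_{L^{d}(10B_{Q})}^{3/2}$ and isolating a purely geometric kernel. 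I would handle the remaining geometric integral using the Hardy--Littlewood--Sobolev inequality and the absolute continuity of $\nu=b\sigma$, whose density on $\tfrac{1}{4}B(S)$ is at most a constant multiple of $r(S)^{-1}$ thanks to the $(d-1)$-polynomial growth of $\mu$, to yield a bound by $\mu(Q)$ times a positive power of $l(Q)$.

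The chief obstacle is two-fold. First, the kernel $\abs{x-y}^{-2(d-2)}$ fails to be locally integrable when $d\geq4$, so a dyadic splitting based on $\abs{y-z}$ is needed, with Lemma~\ref{lem:RHn2} (which relies on $V\in RH_{d}$) used to absorb the endpoint behaviour. Second, steering the exponents through H\"older and Hardy--Littlewood--Sobolev to produce both $\norm{V}_{L^{d}(10B_{Q})}^{3/2}$ and a strictly positive power of $l(Q)$ is delicate; the self-improvement property (Proposition~\ref{prop:SelfImprovement}), giving $V\in RH_{d+\delta}$ for some $\delta>0$, is what I expect to be crucial for ensuring that $\epsilon>0$.
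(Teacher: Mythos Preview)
The paper takes a much more direct route: it establishes a \emph{pointwise} bound on $S\nu$ rather than expanding $|S\nu|^{2}$ as a double integral. For $x\in\supp\Psi_{Q,V}\subset 2B_{Q}$ one applies H\"older's inequality once, with exponents $2d$ and $\tfrac{2d}{2d-1}$, to separate $V^{1/2}$ from $\mathcal{E}(x,\cdot)$, and then controls the remaining factor
\[
\br{\int_{B(x,112\,r(Q))}\frac{d\nu(y)}{|x-y|^{(d-2)\frac{2d}{2d-1}}}}^{\frac{2d-1}{2d}}
\]
by a dyadic annulus decomposition together with the $(d-1)$-polynomial growth of $\nu$. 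The elementary inequality $(d-2)\tfrac{2d}{2d-1}<d-1$ makes the geometric series converge with a leftover positive power of $r(Q)$, giving $|S\nu(x)|\lesssim\|V\|_{L^{d}(10B_{Q})}^{1/2}\,l(Q)^{\epsilon/2}$ uniformly on $\supp\Psi_{Q,V}$. Squaring and multiplying by $\|\Psi_{Q,V}\|_{L^{1}(\mathcal{L}^{d})}\lesssim\|V\|_{L^{d}(2B_{Q})}^{1/2}\mu(Q)$ from Corollary~\ref{cor:PsiQV} finishes the proof. Neither Hardy--Littlewood--Sobolev, nor the self-improvement of $V$, nor the Lebesgue density of $\nu$ enters.

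Your plan has a concrete gap at the H\"older step. Two of the three factors $V^{1/2}(y)$ and $V^{1/2}(z)$ sit inside integrals against $d\nu$, not against Lebesgue measure, so H\"older yields $\|V^{1/2}\|_{L^{2d}(\nu)}$ rather than $\|V\|_{L^{d}(10B_{Q},\,dx)}^{1/2}$; these are not comparable. Converting $d\nu$ to $dx$ via the density bound $\rho_{\nu}\lesssim r(S)^{-1}$ on $\tfrac{1}{4}B(S)$ only trades the problem for uncontrolled factors of $r(S)^{-1}$, since the cubes $S\in\Sigma_{1}'(Q)$ may have $r(S)$ arbitrarily small relative to $r(Q)$. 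The difficulties you anticipate---the non-integrable product kernel for $d\geq4$ and the exponent bookkeeping through Hardy--Littlewood--Sobolev---are artefacts of expanding the square; the paper avoids them entirely by working pointwise and exploiting the $(d-1)$-growth of $\nu$ rather than its Lebesgue density. That growth, which you never invoke, is precisely what produces the gain $l(Q)^{\epsilon}$.
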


\begin{prf}   
The estimate will be proved by obtaining a pointwise estimate of
$S \nu(x)$ on
$\supp  \Psi_{Q,V} \subset 2 B_{Q}$. Fix $x \in 2
B_{Q}$. Since the support of the measure $\nu$ is contained in $Q$,
$$
\abs{S \nu(x)} \leq \int V^{\frac{1}{2}}(y) \abs{\mathcal{E}(x,y)}
d\nu(y) \leq \int_{Q} V^{\frac{1}{2}}(y) \abs{\mathcal{E}(x,y)} d \nu(y).
$$
It is easy to see that $Q \subset 2 B_{Q}  \subset
\tilde{B} := B(x,112r(Q))$. Therefore
$$
 \abs{S \nu(x)} \leq \int_{\tilde{B}} V^{\frac{1}{2}}(y) \abs{\mathcal{E}(x,y)} \, d \nu(y).
$$
H\"{o}lder's inequality and Theorem \ref{thm:MaybSharp} then gives,
\begin{align*}\begin{split}  
 \abs{S \nu(x)} &\lesssim \norm{V}^{\frac{1}{2}}_{L^{d}(\tilde{B})}
 \br{\int_{\tilde{B}} \abs{\mathcal{E}(x,y)}^{\frac{2 d}{2 d -
       1}} d \nu(y)}^{\frac{2d  - 1}{2d}} \\
 &\lesssim \norm{V}_{L^{d}\br{10 B_{Q}}}^{\frac{1}{2}}
 \br{\int_{\tilde{B}} \frac{d \nu(y)}{\abs{x -
       y}^{\frac{(d-2)(2d)}{(2d - 1)}}}}^{\frac{2d - 1}{2d}}.
 \end{split}\end{align*}
Decomposing into annuli,
\begin{align*}\begin{split}  
  \int_{\tilde{B}} \frac{d \nu(y)}{\abs{x - y}^{\frac{(d -
       2)(2 d)}{2d - 1}}} &= \sum_{k = 0 }^{\infty} \int_{2^{-k} \tilde{B}
   \setminus 2^{-k-1} \tilde{B}} \frac{d \nu(y)}{\abs{x - y}^{\frac{(d
       - 2)(2d)}{2d - 1}}} \\
 &\lesssim \sum_{k = 0}^{\infty} \frac{\nu(2^{-k} \tilde{B})}{\br{2^{-k-1}
   r(Q)}^{\frac{(d-2)(2d)}{2d-1}}} \\
 &\lesssim \sum_{k = 0}^{\infty}
 \frac{\br{2^{-k}r(Q)}^{d - 1}}{\br{2^{-k-1}r(Q)}^{\frac{(d - 2)(2d)}{2d-1}}}.
 \end{split}\end{align*}
Since $d - 1 > \frac{(d-2)(2d)}{2d-1}$ we then obtain
$$
 \int_{\tilde{B}} \frac{d \nu(y)}{\abs{x - y}^{\frac{(d -
       2)(2d)}{2d-1}}} \lesssim r(Q)^{\epsilon_{1}} \lesssim l(Q)^{\epsilon_{1}}
 $$
 for some $\epsilon_{1} > 0$ that only depends on $d$. This gives the
 pointwise bound
 $$
\abs{S\nu(x)} \lesssim \norm{V}_{L^{d}\br{10 B_{Q}}}^{\frac{1}{2}} l(Q)^{\epsilon/2}
$$
for some $\epsilon > 0$ that depends only on the dimension. Corollary
\ref{cor:PsiQV} then gives
\begin{align*}\begin{split}  
 \int \abs{S\nu(x)}^{2} \abs{\Psi_{Q,V}} \, d \mathcal{L}^{d}
 &\lesssim l(Q)^{ \epsilon} \norm{V}_{L^{d}\br{10 B_{Q}}} \int \abs{\Psi_{Q,V}} \, d
 \mathcal{L}^{d} \\
 &\lesssim l(Q)^{ \epsilon} \norm{V}_{L^{d}\br{10 B_{Q}}}^{\frac{3}{2}} \mu(Q).
 \end{split}\end{align*}
 \end{prf}

To prove the following lemma we will need the assumption that $T$ is
bounded, the decay of the function $g_{R,\nabla}$, the size condition from Theorem \ref{thm:Size} and the smoothness conditions from Corollary \ref{cor:Reg}, which are available thanks to Proposition \ref{prop:AdmissibleDM} . Since with these ingredients the proof works in an identical manner to the
corresponding potential-free statement {\cite[Lemma~6.1]{conde2019failure}}, we will not include it here. 

\begin{lem}
  \label{lem:6.1}
  The estimate
  $$
\int \abs{T \br{\abs{\Psi_{Q,\nabla}} \mathcal{L}^{d}}}^{2} \, d \nu
\lesssim \mu(Q)
$$
holds.
\end{lem}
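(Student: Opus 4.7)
The plan is to reduce the bound to the assumed $L^{2}(\mu)$-boundedness of $T$, by replacing each density $|g_{R,\nabla}|\mathcal{L}^{d}$ with a surrogate measure absolutely continuous with respect to $\mu|_{R}$ and controlling the approximation error via H\"older regularity. Since $\{3B_{R}\}_{R\in HD_{1}(Q)}$ is pairwise disjoint, so are the supports $2B_{R}$, and $|\Psi_{Q,\nabla}|=\sum_{R}|g_{R,\nabla}|$. Set $c_{R}:=\|g_{R,\nabla}\|_{L^{1}(\mathcal{L}^{d})}/\mu(R)$, which is uniformly bounded in terms of $\tau$ because the high-density condition gives $\mu(R)\gtrsim\tau\,l(R)^{d-1}$ while $\|g_{R,\nabla}\|_{L^{1}}\lesssim l(R)^{d-1}$. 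Write
$$
T\bigl(|\Psi_{Q,\nabla}|\mathcal{L}^{d}\bigr)=T(f\mu)+\sum_{R\in HD_{1}(Q)}T\omega_{R},
$$
where $f:=\sum_{R}c_{R}\chi_{R}$ is a bounded function supported in $Q$, and each $\omega_{R}:=|g_{R,\nabla}|\mathcal{L}^{d}-c_{R}\chi_{R}\mu$ is a zero-mean signed measure supported in $2B_{R}$ with total variation $\lesssim l(R)^{d-1}$.

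For the main term, the assumed $L^{2}(\mu)$-boundedness of $T$ yields $\|T(f\mu)\|_{L^{2}(\mu)}^{2}\lesssim\|f\|_{L^{2}(\mu)}^{2}\lesssim\mu(Q)$, and one then transfers this to an $L^{2}(\nu)$-bound by exploiting that $\nu=b\sigma$ with $b$ bounded and each atom of $\sigma$ being a Lebesgue-density on a small ball $\frac{1}{4}B(S)$ adjacent to a cube $S\in\Sigma'_{1}(Q)\subset Q$; the H\"older continuity of $T(f\mu)$ on these balls, provided by Corollary \ref{cor:Reg} at scales made admissible by Proposition \ref{prop:AdmissibleDM}, allows comparison of the $L^{2}(\nu)$-average on $\frac{1}{4}B(S)$ with the $L^{2}(\mu)$-average on $S$, with an error that sums over the dyadic structure. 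For the error terms, the H\"older regularity in Corollary \ref{cor:Reg} together with the zero-mean and support properties of $\omega_{R}$ delivers the standard tail estimate
$$
|T\omega_{R}(x)|\lesssim\frac{l(R)^{d-1+\gamma}}{\operatorname{dist}(x,2B_{R})^{d-1+\gamma}}\lesssim_{\tau}\frac{\mu(R)\,l(R)^{\gamma}}{\operatorname{dist}(x,2B_{R})^{d-1+\gamma}}\quad\text{for }x\notin 4B_{R},
$$
while the size estimate from Theorem \ref{thm:Size} combined with $|g_{R,\nabla}|\lesssim l(R)^{-1}\chi_{2B_{R}}$ yields $|T\omega_{R}(x)|\lesssim 1$ for $x\in 4B_{R}$. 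Summing over $R\in HD_{1}(Q)$, using the disjointness of $3B_{R}$ and the $(d-1)$-polynomial growth of $\mu$, and then integrating against $\nu$, gives $\|\sum_{R}T\omega_{R}\|_{L^{2}(\nu)}^{2}\lesssim\mu(Q)$.

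The main obstacle is the change-of-measure in the main-term estimate, namely passing from the $L^{2}(\mu)$-bound afforded by the hypothesis to the target $L^{2}(\nu)$-bound, since $\nu$ is absolutely continuous with respect to Lebesgue measure rather than $\mu$. The resolution hinges on combining the H\"older continuity of $T(f\mu)$ away from $\supp f$ with the very specific structure of $\nu$ as a bounded density times $\sigma$, together with Proposition \ref{prop:AdmissibleDM} which guarantees the scales are small enough for all the kernel estimates of Section \ref{sec:KernelEstimates} to apply. Since the decay of $g_{R,\nabla}$, the size estimate, the H\"older regularity and the $L^{2}(\mu)$-boundedness of $T$ are exactly the four ingredients used in the potential-free analogue \cite[Lemma~6.1]{conde2019failure}, the argument there can be followed verbatim once our local kernel results are substituted in.
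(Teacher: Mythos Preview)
Your proposal is correct and takes essentially the same approach as the paper: both defer to the potential-free analogue \cite[Lemma~6.1]{conde2019failure}, invoking precisely the same four ingredients (the assumed $L^{2}(\mu)$-boundedness of $T$, the decay of $g_{R,\nabla}$, the size estimate of Theorem~\ref{thm:Size}, and the H\"older regularity of Corollary~\ref{cor:Reg}, made available at the relevant scales by Proposition~\ref{prop:AdmissibleDM}). The paper gives no further detail beyond this reference, whereas you additionally sketch the decomposition into a main term $T(f\mu)$ and zero-mean error terms $T\omega_{R}$ that underlies the cited argument.
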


 We are now at a point where the contradiction argument can be completed.
On successively applying \eqref{eqtn:HD1} and the reproducing
formula \eqref{eqtn:Reproducing},
\begin{align*}\begin{split}  
    \mu(Q) &\simeq \sum_{R \in HD_{1}(Q)} \nu(1.5B_{R}) \\
    &\leq \sum_{R \in HD_{1}(Q)} \int \varphi_{R} \, d \nu \\
    &= \sum_{R \in HD_{1}(Q)} \int T^{*} \br{g_{R,\nabla} \, d
      \mathcal{L}^{d}} d \nu + \int S^{*} \br{g_{R,V} \, d
      \mathcal{L}^{d}} d \nu \\
    &= \int T \nu \cdot \Psi_{Q,\nabla} \, d \mathcal{L}^{d}
    + \int S \nu \cdot \Psi_{Q, V} \, d \mathcal{L}^{d}.
  \end{split}\end{align*}
The term corresponding to $T$ can be treated in an identical
manner to \cite{conde2019failure} by applying Lemma \ref{lem:GlobalPointwise} to obtain
$$
\int T \nu \cdot \Psi_{Q,\nabla} \, d \mathcal{L}^{d} \lesssim
\br{\lambda + l(Q)^{\gamma}}^{\frac{1}{4}} \mu(Q).
$$
For the term corresponding to $S$, apply Corollary \ref{cor:PsiQV}
and Lemma \ref{lem:TV} to obtain
\begin{align*}\begin{split}  
 \int S \nu \cdot \Psi_{Q,V} \, d \mathcal{L}^{d} &\leq \br{\int
 \abs{S \nu(x)}^{2} \abs{\Psi_{Q,V}} \, d \mathcal{L}^{d}}^{\frac{1}{2}} \br{\int \abs{\Psi_{Q,V}}\, d
 \mathcal{L}^{d}}^{\frac{1}{2}} \\
&\lesssim l(Q)^{\epsilon/2} \norm{V}_{L^{d}(10B_{Q})} \mu(Q). 
 \end{split}\end{align*}
Putting these two estimates together gives
$$
\mu(Q) \lesssim \br{\br{\lambda + l(Q)^{\gamma}}^{\frac{1}{4}} +
  l(Q)^{\epsilon/2} \norm{V}_{L^{d}\br{10B_{Q}}}}\mu(Q).
$$
For $Q$ and $\lambda$ small enough this will result in a contradiction.

\subsection{A Shorter Proof of Proposition \ref{prop:SecondReduction}}
\label{sec:ShortProof}

In this section, we will demonstrate an alternative proof to Proposition
\ref{prop:SecondReduction} that replaces Section
\ref{subsec:Contradiction}. This alternative argument instead relies
upon the potential-free result from \cite{conde2019failure}. Let
$T^{0}_{A}$ denote the operator
$$
T^{0}_{A}\omega(x) := \int \nabla_{1} \mathcal{E}^{0}_{A}(x,y) \, d \omega(y)
$$
for a measure $\omega$ on $\R^{d}$, where if you recall
$\mathcal{E}^{0}_{A}$ is the fundamental solution of $L_{A}^{0} = -
\mathrm{div} A \nabla$ on $\R^{d}$. Also define
$$
R \omega (x) := \int \br{\nabla_{1} \mathcal{E}(x,y) - \nabla_{1}
  \mathcal{E}^{0}_{A}(x,y)} \, d \omega(y).
$$
It was proved in {\cite[Section~6]{conde2019failure}} that there must
exist some $C > 0$ such that
$$
\norm{T^{0}_{A} \sigma}_{L^{2}\br{\sigma}} \geq C \cdot \sigma\br{\R^{d}}^{\frac{1}{2}}
$$
for all sufficiently small cubes. Therefore,
\begin{align*}\begin{split}  
 \norm{T \sigma}_{L^{2}\br{\sigma}} &\geq
\norm{T^{0}_{A} \sigma}_{L^{2}\br{\sigma}} - \norm{R
  \sigma}_{L^{2}\br{\sigma}} \\
&\geq C \cdot \sigma \br{\R^{d}}^{\frac{1}{2}} - \norm{R \sigma}_{L^{2}\br{\sigma}}.
\end{split}\end{align*}
To prove Proposition \ref{prop:SecondReduction} it is therefore
sufficient to prove that there exists $N_{0} > 0$ such that
\begin{equation}
  \label{eqtn:ShorterProof}
\norm{R \sigma}_{L^{2}\br{\sigma}} \leq \frac{C}{2} \sigma \br{\R^{d}}^{\frac{1}{2}}
\end{equation}
for all $Q \in \Sigma_{N}$ with $N > N_{0}$. Recall that $Q$ will be
admissible in the sense that $\abs{x - y}
\leq \inf_{\supp  \mu} \rho$ for all $x, \, y \in Q$ if we
let $N_{0} > 0$ be large enough so that $Q \in \Sigma_{N}$ with $N >
N_{0}$ implies that $Q \in \mathcal{D}_{k}$ for some $k \geq K_{0}$,
where $K_{0}$ is as given in Proposition \ref{prop:AdmissibleDM}.
Since the measure $\sigma$ is localized to the cube $Q$, Proposition
\ref{prop:Flatness} can then be
applied. For $x \in \supp  \sigma$, this will give
\begin{align*}\begin{split}  
 \abs{R \sigma(x)} &\leq \int_{Q} \abs{\nabla_1 \mathcal{E}(x,y) -
   \nabla_1 \mathcal{E}^{0}_{A}(x,y)} \, d \sigma(y) \\
 &\lesssim \int_{B(x, 56 r(Q))} \frac{d \sigma(y)}{\abs{x - y}^{d - 1 - \beta}} \\
 &= \sum_{k = 0}^{\infty} \int_{2^{-k} B (x, 56 r(Q)) \setminus 2^{-(k + 1)}
   B(x,56 r(Q))} \frac{d \sigma(y)}{\abs{x - y}^{d - 1 - \beta}} \\
 &\lesssim \sum_{k = 0}^{\infty} \frac{\sigma \br{2^{-k}
     B (x, 56 r(Q))}}{\br{2^{-(k + 1)}r(Q)}^{d - 1 - \beta}}.
\end{split}\end{align*}
The polynomial growth of the measure $\sigma$ then implies
$$
 \abs{R \sigma(x)} \lesssim \sum_{k = 0}^{\infty}
 \br{2^{-k}r(Q)}^{\beta} \lesssim r(Q)^{\beta}.
$$
This pointwise estimate leads to
$$
\norm{R \sigma}_{L^{2}\br{\sigma}}^{2} = \int_{Q} \abs{R
   \sigma(x)}^{2} d \sigma(x) 
 \lesssim r(Q)^{2\beta} \sigma\br{\R^{d}}.
$$
Clearly if we set $N_{0}$ large enough then \eqref{eqtn:ShorterProof}
will be satisfied for all $Q \in \Sigma_{N}$ with $N > N_{0}$. This
completes our second proof of Proposition \ref{prop:SecondReduction}.


\begin{thebibliography}{10}
\bibitem{conde2019failure}
Jos{\'e}~M. Conde-Alonso, Mihalis Mourgoglou, and Xavier Tolsa.
\newblock Failure of ${L}^{2}$ boundedness of gradients of single layer
  potentials for measures with zero low density.
\newblock {\em Math. Ann.}, 373(1-2):253--285, 2019.

\bibitem{davey2018fundamental}
Blair Davey, Jonathan Hill, and Svitlana Mayboroda.
\newblock Fundamental matrices and {G}reen matrices for non-homogeneous
  elliptic systems.
\newblock {\em Publ. Mat.}, 62(2):537--614, 2018.

\bibitem{david2000removable}
Guy David and Pertti Mattila.
\newblock Removable sets for {L}ipschitz harmonic functions in the plane.
\newblock {\em Rev. Mat. Iberoamericana}, 16(1):137--215, 2000.

\bibitem{eiderman2014s}
Vladimir Eiderman, Fedor Nazarov, and Alexander Volberg.
\newblock The s-{R}iesz transform of an s-dimensional measure in
  $\mathbb{R}^{2}$ is unbounded for $1< s< 2$.
\newblock {\em J. Anal. Math.}, 122(1):1--23, 2014.

\bibitem{giaquinta1983multiple}
Mariano Giaquinta.
\newblock {\em Multiple integrals in the calculus of variations and nonlinear
  elliptic systems}.
\newblock Princeton University Press, Princeton, NJ, 1983.

\bibitem{gilbarg2001elliptic}
David Gilbarg and Neil~S. Trudinger.
\newblock {\em Elliptic partial differential equations of second order}.
\newblock springer, 2001.

\bibitem{grafakos2009modern}
Loukas~Grafakos.
\newblock {\em Modern Fourier Analysis}, volume 250.
\newblock Springer, 2009.

\bibitem{gruter1982green}
Michael Gr{\"u}ter and Kjell-Ove Widman.
\newblock The Green function for uniformly elliptic equations.
\newblock {\em Manuscripta Math.}, 37(3):303--342, 1982.

\bibitem{hofmann2007kim}
Steve Hofmann and Seick Kim.
\newblock The Green function estimates for strongly elliptic systems of second order.
\newblock {\em Manuscripta Math.}, 124 (2007), 139--172.

\bibitem{jaye2016riesz}
Benjamin Jaye, Fedor Nazarov, Maria~Carmen Reguera, and Xavier Tolsa.
\newblock The {R}iesz transform of codimension smaller than one and the {W}olff
energy.
\newblock {\em Mem. Amer. Math. Soc.}, 266(1293), 2020.


\bibitem{kato1980perturbation}
Tosio Kato.
\newblock {\em Perturbation Theory for Linear Operators}, volume 132.
\newblock Springer Science \& Business Media, 1980.

\bibitem{kenig2011layer}
Carlos Kenig and Zhongwei Shen.
\newblock Layer potential methods for elliptic homogenization problems.
\newblock {\em Comm. Pure Appl. Math.}, 64(1):1--44, 2011.

\bibitem{mateu2005capacity}
Joan Mateu, Laura Prat, and Joan Verdera.
\newblock The capacity associated to signed {R}iesz kernels, and {W}olff
  potentials.
\newblock {\em J. Reine Angew. Math.}, 2005(578):201--223, 2005.

\bibitem{mateu2004riesz}
Joan Mateu and Xavier Tolsa.
\newblock Riesz transforms and harmonic ${\rm Lip}_1$-capacity in Cantor sets.
\newblock {\em Proc. London Math. Soc. (3)}, 89(3):676--696, 2004.

\bibitem{Mattila1996}
Pertti Mattila, Mark S. Melnikov and Joan  Verdera. 
\newblock The Cauchy integral, analytic capacity, and uniform rectifiability.
\newblock {\em Ann. of Math. (2)},  144 ,  no. 1, 127--136, 1996.

\bibitem{mayboroda2019exponential}
Svitlana Mayboroda and Bruno Poggi.
\newblock Exponential decay estimates for fundamental solutions of
  {S}chr{\"o}dinger-type operators.
\newblock {\em Trans. Amer. Math. Soc.}, 372(6):4313--4357, 2019.

\bibitem{nazarov2014uniform}
Fedor Nazarov, Xavier Tolsa, and Alexander Volberg.
\newblock On the uniform rectifiability of {AD}-regular measures with bounded
  {R}iesz transform operator: the case of codimension 1.
\newblock {\em Acta Math.}, 213(2):237--321, 2014.

\bibitem{prat2019boundedness}
Laura Prat, Carmelo Puliatti and Xavier Tolsa.
\newblock $L^2$-boundedness of gradients of single layer potentials
and uniform rectifiability.
\newblock Accepted in {\em Anal. PDE}.
\newblock arXiv preprint can be found at {\em arXiv:1810.06477 }, 2018.

\newblock 

\bibitem{reguera2016riesz}
Maria~Carmen Reguera and Xavier Tolsa.
\newblock Riesz transforms of non-integer homogeneity on uniformly disconnected
  sets.
\newblock {\em Trans. Amer. Math. Soc.}, 368(10):7045--7095, 2016.

\bibitem{shen1995lp}
Zhongwei Shen.
\newblock ${L}^{p}$ estimates for {S}chr{\"o}dinger operators with certain
  potentials.
\newblock {\em Ann. Inst. Fourier (Grenoble)}, 45(2):513--546, 1995.

\bibitem{shen1999fundamental}
Zhongwei Shen.
\newblock On fundamental solutions of generalized {S}chr{\"o}dinger operators.
\newblock {\em J. Funct. Anal.}, 167(2):521--564, 1999.

\bibitem{tolsa2011calderon}
Xavier Tolsa.
\newblock Calder\'{o}n-Zygmund capacities and Wolff potentials on Cantor sets.
\newblock {\em J. Geom. Anal.}, 21(1):195--223, 2011.

\end{thebibliography}

\vspace*{0.2in}

\hspace*{-0.2in} \begin{minipage}[b]{0.37\linewidth}
    \textbf{Julian Bailey} \\
   u4545137@alumni.anu.edu.au 
  \end{minipage}  \begin{minipage}[b]{0.31\linewidth}
    \textbf{Andrew J. Morris} \\
    a.morris.2@bham.ac.uk 
  \end{minipage} 
  \begin{minipage}[b]{0.33\linewidth}
    \textbf{Maria Carmen Reguera} \\
    m.reguera@bham.ac.uk
\end{minipage}

\vspace*{0.1in}

\small \textsc{School of Mathematics, University of Birmingham, Edgbaston,
  Birmingham, B15 2TT, UK}

\end{document}